\def\UrlSpecials{\do\~{\kern -.15em\lower .7ex\hbox{~}\kern .04em}} \catcode`~=13
\DeclareMathAlphabet{\mathantt}{OT1}{antt}{li}{it}
\DeclareMathAlphabet{\mathpzc}{OT1}{pzc}{m}{it}
\newtheorem{thm}{Theorem}[section]
\newtheorem{prop}[thm]{Proposition}
\newtheorem{lemma}[thm]{Lemma}
\newtheorem{defn}[thm]{Definition}
\newcommand{\bbQ}{\mathbb{Q}}
\newcommand{\bbR}{\mathbb{R}}
\newcommand{\bbT}{\mathbb{T}}
\newcommand{\bbC}{\mathbb{C}}
\newcommand{\bbZ}{\mathbb{Z}}
\newcommand{\s}{g}
\newcommand{\n}{\eta}
\newcommand{\p}{\mathfrak{p}}
\newcommand{\pr}{\text{pr}}
\newcommand{\Sm}{\mathcal{C^\infty}}
\newcommand{\hess}{\operatorname{Hess}}
\newcommand{\interior}{\operatorname{int}}
\newcommand{\Lap}{\triangle}
\newcommand{\op}{\mathcal{P}}
\newcommand{\tr}{\operatorname{Tr}}
\newcommand{\vol}{\text{vol}}
\newcommand{\action}{\tau}
\newcommand{\num}{\mathcal{N}}
\newcommand{\param}{\nu}
\def\det{\mathop{\rm det}\nolimits}
\def\dim{\mathop{\rm dim}\nolimits}
\def\sgn{\mathop{\rm sgn}\nolimits}
\def\spec{\mathop{\rm spec}\nolimits}
\def\tr{\mathop{\rm tr}\nolimits}
\def\vol{\mathop{\rm vol}\nolimits}
\newcommand{\fp}{{\mathfrak p}}
\newcommand{\RR}{{\mathbb R}}
\begin{document}

\title[]{Semi-classical weights and equivariant spectral theory}

\author{Emily B. Dryden}
\address{Department of Mathematics, Bucknell University, Lewisburg, PA 17837, USA}
\email{ed012@bucknell.edu}
\author{Victor Guillemin}
\address{Department of Mathematics, Massachusetts Institute of Technology, Cambridge, MA 02139, USA}
\email{vwg@math.mit.edu}
\author{Rosa Sena-Dias}
\address{Departamento de Matem\'{a}tica, Instituto Superior T\'{e}cnico, Av. Rovisco Pais, 1049-001 Lisboa, Portugal}
\email{rsenadias@math.ist.utl.pt}

\date{}

\begin{abstract}

We prove inverse spectral results for differential operators on manifolds {and orbifolds} invariant under a torus action. These inverse spectral results involve the asymptotic equivariant spectrum, which is the spectrum itself  together with ``very large" weights of the torus action on eigenspaces. More precisely, we show that the asymptotic equivariant spectrum of the Laplace operator of any toric metric on a generic toric orbifold determines the equivariant biholomorphism class of the orbifold; we also show that the asymptotic equivariant spectrum of a $\bbT^n$-invariant Schr\"odinger operator on $\bbR^n$ determines its potential in some suitably convex cases. In addition, we prove that the asymptotic equivariant spectrum of an $S^1$-invariant metric on $S^2$ determines the metric itself in many cases. Finally, we obtain an asymptotic equivariant inverse spectral result for weighted projective spaces. As a crucial ingredient {in these inverse results,} we derive a surprisingly simple formula for the asymptotic equivariant trace of a family of semi-classical differential operators invariant under a torus action.

\end{abstract}

\maketitle


\section{Introduction}\label{sec:intro}

To what extent are the geometric properties of a Riemannian manifold determined by the spectrum of its Laplacian? In \cite{dgs2} the authors study a variation of this classical problem in the context of an isometric group action. This gives rise to the notion of  {\it equivariant spectrum}, which is simply the spectrum together with the weights of the representations of the isometric group action on the eigenspaces. A classical tool used to answer inverse spectral questions is an asymptotic expansion for the heat kernel or the wave trace for the Laplacian. With this in mind, it is natural to try to use such an expansion for the equivariant heat kernel or the equivariant wave trace, thereby ``counting" equivariant eigenfunctions. 

In \cite{dgs3} the authors obtain asymptotic expansions for traces of operators on manifolds in the presence of an isometry. The goal of the present paper is to develop the techniques in \cite{dgs3} to obtain an asymptotic expansion for the equivariant trace of a semi-classical differential operator invariant under a group action; the equivariant trace encodes information about equivariant eigenspaces. One of our key observations is that by considering ``very large" or {\it semi-classical} weights, {which give rise to the asymptotic equivariant spectrum,} it is possible to obtain a relatively simple formula for the asymptotic behavior of this equivariant trace. We now give a rough formulation of this result; a more precise version can be found in Theorem \ref{mainthm}.  

Consider a Riemannian manifold $X^n$ admitting an isometric action of a {torus $\bbT^m$}. This action lifts to a Hamiltonian action on $T^*X$ with moment map $\Phi: T^*X \rightarrow {\bbR^m}$. For generic $\alpha \in {\bbR^m}$, let $(T^*X)_\alpha:=\Phi^{-1}(\alpha)/{\bbT^m}$ denote the symplectic reduction of $T^*X$ at level $\alpha$, and let $\chi_\alpha$ be a character of ${\bbT^m}$ associated with $\alpha$. Let $\op_h$ be a family of semi-classical differential operators that are ${\bbT^m}$-invariant. In applications we will consider $\op_h=h^2\Lap$ or $\op_h=h^2\Lap+V$. We denote by $\fp_0:T^*X\rightarrow \bbR$ the leading symbol of $\op_h$. This induces a map $\p_\alpha:(T^*X)_\alpha\rightarrow \bbR$. Let $\lambda_i(\alpha,h)$ for $i=1,2,\dots$ denote the $\frac{\alpha}{h}$-equivariant eigenvalues counted with multiplicities, that is, the eigenvalues of $\op_h$ restricted to $C^\infty(X)^{\frac{\alpha}{h}}$, where $C^\infty(X)^{\frac{\alpha}{h}}$is the space of functions
$$
\Sm(X)^{\frac{\alpha}{h}}=\{f\in \Sm(X): f(gx)=\chi_{\frac{\alpha}{h}}(g)f(x)\}.
$$
Consider the spectral measure
\begin{equation}\label{spectral_measure}
\mu_{\frac{\alpha}{h}}(\rho) := \sum_i \rho(\lambda_i(\alpha,h)),
\end{equation}
where $\rho \in C_0^\infty(\bbR)$ {and the eigenvalues are repeated according to their multiplicities.} 
This is well defined provided $\frac{1}{h}$ is an integer{, and} admits an asymptotic expansion in powers of $h$. 

\begin{thm}
With the setup given above, let $\frac{\alpha}{h}$ be a weight of the $\bbT^m$-action {$\tau$} such that $\alpha$ is a regular value for the moment map $\Phi:T^*X\rightarrow \bbR^m$. Consider all the isotropy groups for the $\bbT^m$ action which are finite and label the non-identity elements in them as $ e^{i\theta_1}, \dots, e^{i\theta_N}$. Then
{
\begin{align}\label{asymp_expansion}
\mu_{\frac{\alpha}{h}}(\rho) &\sim& (2\pi h)^{m-n}\sum_{i=0}^\infty h^i \sum_{\ell \leq 2i}\mu_{i,\ell} \left( \frac{d^{\ell}\rho}{dt^{\ell}} \right) \hspace*{5cm} \notag \\ 
&&+ \  
\sum_{r,s} (2 \pi h)^{{-k}} \chi_{\frac{\alpha}{h}}(e^{-i\theta_r}) |\operatorname{det}(A_r-I)|^{-1} \left( \mu_{r,s} +O(h^{-{k}+1})\right),
\end{align}
where $\mu_{i, \ell}$ and $\mu_{r,s}$ are measures; $s$ is a positive integer depending on $r=1, \dots, N$; $k \geq m$ depends on $r$ and $s$; and $A_r$ is the linear action of $\tau(e^{i\theta_r})$ restricted to a certain normal space. Furthermore we have that the leading term {of the expression in the first line} is $(\fp_\alpha)_*\nu$}, where $\fp_\alpha$ is the reduced symbol of $\op_h$ on the reduced cotangent space $(T^*X)_\alpha$, and $\nu$ is the volume induced by the reduced symplectic form $\omega_\alpha$.  
\end{thm}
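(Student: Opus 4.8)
The plan is to express the equivariant spectral measure as a group average of an ordinary trace, insert the non-equivariant trace asymptotics of \cite{dgs3}, and carry out stationary phase on the torus $\bbT^m$; the two blocks of terms in \eqref{asymp_expansion} will appear as the contributions of the identity component of $\bbT^m$ and of its finite isotropy elements. Concretely: since $1/h\in\bbZ$ the character $\chi_{\alpha/h}$ is well defined, and the orthogonal projection of $L^2(X)$ onto the closure of $\Sm(X)^{\alpha/h}$ is $\Pi_{\alpha/h}=\int_{\bbT^m}\overline{\chi_{\alpha/h}(g)}\,\tau(g)\,dg$, so that for $\rho\in C_0^\infty(\bbR)$
\[
\mu_{\alpha/h}(\rho)=\tr\big(\rho(\op_h)\,\Pi_{\alpha/h}\big)=\int_{\bbT^m}\overline{\chi_{\alpha/h}(g)}\,\tr\big(\tau(g)\,\rho(\op_h)\big)\,dg.
\]
Next, using semiclassical functional calculus write $\rho(\op_h)=\frac{1}{2\pi h}\int_\bbR\hat\rho(t)\,e^{it\op_h/h}\,dt$, so that $\tau(g)\,\rho(\op_h)$ is microlocally a semiclassical Fourier integral operator whose canonical relation is the graph of the lift of $g$ to $T^*X$ composed with the Hamiltonian flow of the leading symbol $\fp_0$. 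The trace asymptotics of \cite{dgs3} in the presence of an isometry then yield an expansion of $\tr(\tau(g)\,\rho(\op_h))$ whose terms are distributions supported on $\opF(g)\subset T^*X$ intersected with the energy shell of $\fp_0$; the point to be verified here is that this expansion is uniform in $g$ on each stratum of $\bbT^m$ on which $\dim\opF(g)$ is locally constant.

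Substituting that expansion into the group integral gives, stratum by stratum, an oscillatory integral over $\bbT^m$ and over $t$ whose phase is, to leading order, $\langle\log g,\Phi\rangle-\langle\alpha/h,\log g\rangle$ plus the propagation phase coming from $e^{it\op_h/h}$. Stationary phase now isolates two families of critical contributions. On the identity component the critical equations in the $g$-variable force $\Phi=\alpha$, so the $T^*X$-integral localizes to $\Phi^{-1}(\alpha)$; the transverse Gaussian integral over the $m$ torus directions contributes the factor $(2\pi h)^{m}$, and identifying $\Phi^{-1}(\alpha)/\bbT^m$ with the reduced space $(T^*X)_\alpha$ yields the first line of \eqref{asymp_expansion}, with the powers $h^i$ and the derivatives $d^\ell\rho/dt^\ell$ produced by the subleading terms of \cite{dgs3} and of the stationary phase lemma. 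For each $g=e^{i\theta_r}$ in a finite isotropy group the fixed set $\opF(g)$ has some codimension $k$ (with $k\ge m$ since $\opF(g)$ is $\bbT^m$-invariant), and one obtains an Atiyah--Bott--Lefschetz type contribution carrying $\chi_{\alpha/h}(e^{-i\theta_r})\,|\det(A_r-I)|^{-1}$, where $A_r$ is the linearization of $\tau(e^{i\theta_r})$ on the normal space, together with the overall weight $(2\pi h)^{-k}$; these are the terms in the second line, with the $\mu_{r,s}$ the successive symbols produced by stationary phase transverse to $\opF(g)$.

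To pin down the leading coefficient, note that on the identity component the amplitude of $\rho(\op_h)$ is $\rho(\fp_0)+O(h)$; the stationary-phase restriction to $\Phi=\alpha$, the Fourier inversion in $t$, and the passage to the quotient together identify the leading term with $(2\pi h)^{m-n}\int_{(T^*X)_\alpha}\rho(\fp_\alpha)\,d\nu=(2\pi h)^{m-n}(\fp_\alpha)_*\nu(\rho)$, where $\nu$ is the Liouville volume attached to $\omega_\alpha$ and $\fp_\alpha$ the reduced symbol, exactly as asserted. The main obstacle is the stationary-phase step: making the analysis on $\bbT^m$ uniform across the isotropy stratification, cleanly separating the ``large'' finite-isotropy contributions from the main term, and bookkeeping the determinant factors, the powers of $2\pi h$, and the precise form of the measures $\mu_{i,\ell}$ and $\mu_{r,s}$. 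Once the localization is in place, recognizing the leading coefficient as the pushforward of the reduced Liouville measure is a routine ``quantization commutes with reduction'' computation.
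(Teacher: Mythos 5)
Your overall architecture --- averaging $\overline{\chi_{\alpha/h}(g)}\,\tr(\tau(g)\rho(\op_h))$ over $\bbT^m$, localizing at $g=e$ and at the finitely many finite-isotropy elements $e^{i\theta_r}$, and reading off the first line from the constraint $\Phi=\alpha$ and the second from Lefschetz-type factors --- is exactly the paper's, and your bookkeeping of the powers $(2\pi h)^{m-n}$ and $(2\pi h)^{-k}$ and of the leading coefficient $(\fp_\alpha)_*\nu$ is correct. But there are two problems. First, the detour through the wave group is both mis-stated and unnecessary: the identity $\rho(\op_h)=\frac{1}{2\pi h}\int\hat\rho(t)e^{it\op_h/h}dt$ is not the functional calculus for $\rho(\op_h)$ (with $\hat\rho(t)$ in the integrand it produces a rescaled version of $\rho$, not $\rho(\op_h)$ itself), and invoking the canonical relation of $e^{it\op_h/h}$ drags in the Hamiltonian flow, a propagation phase, and ``energy shell'' intersections that play no role when $\rho$ is a fixed smooth compactly supported function. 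The paper instead uses the semiclassical pseudodifferential expansion of the Schwartz kernel, $K_{\rho,h}(x,y)\sim(2\pi h)^{-n}\sum_k h^k\int a_{\rho,k}(y,\xi)e^{i(x-y)\cdot\xi/h}\,d\xi$ with $a_{\rho,k}=\sum_{\ell\le 2k}b_{k,\ell}\,\rho^{(\ell)}(\fp_0)$ (Lemma \ref{usual_schwartz_kernel}); this is where the derivatives $d^\ell\rho/dt^\ell$ in the statement come from, and it requires no flow.

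Second, and more seriously, the step you yourself flag as ``the main obstacle'' --- carrying out the stationary phase on $\bbT^m\times T^*X$ uniformly across the isotropy stratification --- is the actual content of the proof, and you have not supplied the idea that closes it. The paper's resolution is the Slice Theorem: on $X_0$ only $\theta=0$ and the finitely many $\theta_r$ are critical, so one takes a partition of unity in $x$ (patches away from the $X_{r,s}$ versus tubular neighborhoods of them) together with cutoffs in $\theta$ near $0$ and near each $\theta_r$; in slice coordinates the action reads $\tau(\theta)(u,v)=(u,v+\theta)$ near free orbits and $\tau(\theta+\theta_r)(u,v)=(A_ru,v+\theta)$ near $X_{r,s}$, so after translating $\hat v\mapsto\hat v-\alpha$ and $\theta\mapsto\theta-\theta_r$ the total phase is \emph{exactly quadratic} and the quadratic stationary phase lemma applies verbatim, producing $|\det(A_r-I)|^{-1}$, the factor $e^{-i\theta_r\cdot\alpha/h}=\chi_{\alpha/h}(e^{-i\theta_r})$, and the codimension count $k\ge m$. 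Without this normal form (or an equivalent clean-intersection argument), your outline records the expected answer rather than deriving it.
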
 
We will also be able to describe the {form of the terms, and especially the leading term, arising from the second line of the expansion}. A more precise version of this theorem is Theorem \ref{mainthm}.
Although this formula is interesting in its own right, we will show that it leads to several {asymptotic} equivariant inverse spectral results.  We emphasize that these results involve the asymptotic equivariant spectrum, i.e., the spectrum together with the ``large" weights of the group action on the eigenspaces. {For instance}, for so-called \emph{admissible} potentials associated to symmetric Schr\"{o}dinger operators, we prove in \S \ref{sec:inverse_spectral_results} that the potential is determined by the asymptotic equivariant spectrum of the operator.

\begin{thm}
Let $V{(s_1, \dots, s_n)}$ be an admissible function on {$\bbR^{n}_+$}.
Consider the Schr\"odinger operator given by $-h^2\Lap+V$, where $\Lap = - \sum \left(\frac{\partial^2}{\partial x_i^2} + \frac{\partial^2}{\partial y_i^2}\right)$ and $V = V(x_1^2+y_1^2, \dots, x_n^2+y_n^2)$. Then $V$ is determined by the asymptotic equivariant spectrum of ${-h^2}\Lap+V$.
\end{thm}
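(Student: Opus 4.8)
The plan is to feed the semi-classical family $\op_h = -h^2\Lap + V$ on $X = \bbR^{2n}$ into Theorem~\ref{mainthm} and then invert the leading-order data it returns. First I would fix the geometry: the torus $\bbT^n$ acts on $\bbR^{2n} = \{(x_1,y_1,\dots,x_n,y_n)\}$ by rotating each coordinate plane, the cotangent lift has moment map $\Phi = (\Phi_1,\dots,\Phi_n)$ with $\Phi_j = x_j\eta_j - y_j\xi_j$, and $\alpha$ is a regular value exactly when every $\alpha_j \neq 0$; for such $\alpha$ the $\bbT^n$-action on $\Phi^{-1}(\alpha)$ is free and $(T^*X)_\alpha$ is a smooth symplectic manifold. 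The decisive point is that every isotropy group of the lifted action on $T^*X$ is a subtorus --- the product of the coordinate circles over the indices $j$ for which position and momentum both vanish in the $j$th plane --- hence is never finite and nontrivial. Thus $N = 0$ in \eqref{asymp_expansion} and the entire second line drops out: for every regular $\alpha$,
\[
\mu_{\frac{\alpha}{h}}(\rho) \sim (2\pi h)^{-n}\bigl( (\fp_\alpha)_*\nu(\rho) + O(h) \bigr).
\]

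Next I would make $\fp_\alpha$ explicit. Passing to polar coordinates $(r_j,\phi_j)$ in each plane with conjugate momenta $(p_{r_j},\Phi_j)$, reduction at $\Phi_j = \alpha_j$ identifies $(T^*X)_\alpha$ with $\{(r,p) : r \in \bbR^n_+,\ p \in \bbR^n\}$ equipped with the Liouville volume $\nu = \prod_j dr_j\,dp_j$, while the leading symbol $\fp_0(x,y,\xi,\eta) = \sum_j(\xi_j^2 + \eta_j^2) + V(x_1^2+y_1^2,\dots,x_n^2+y_n^2)$ descends to
\[
\fp_\alpha(r,p) = \sum_{j=1}^n \Bigl( p_j^2 + \frac{\alpha_j^2}{r_j^2} \Bigr) + V(r_1^2,\dots,r_n^2).
\]
Writing $s_j = r_j^2$ and defining the effective potential $W_\alpha(s) = \sum_j \alpha_j^2/s_j + V(s)$ on $\bbR^n_+$, I would integrate out $p$ over the ball $\{|p|^2 < E - W_\alpha\}$ and use $dr_j = ds_j/2\sqrt{s_j}$ to exhibit $(\fp_\alpha)_*\nu$ as an explicit, invertible (Abel-type) transform of the pushforward measure $(W_\alpha)_*\mu$, where $\mu := \prod_j ds_j/2\sqrt{s_j}$. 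Equivalently, the leading term of $\mu_{\alpha/h}$ is the same data as the Weyl-type function $E \mapsto \mu\bigl( \{ s \in \bbR^n_+ : W_\alpha(s) < E \} \bigr)$.

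Now the spectral hypothesis enters. For any $\alpha \in \bbQ^n_+$ (automatically a regular value) and $h \to 0$ along the infinitely many values for which $\alpha/h \in \bbZ^n$ --- so that $\mu_{\alpha/h}$ is defined --- the asymptotic equivariant spectrum, which is precisely the eigenvalue list together with these large weights, determines every coefficient of the expansion \eqref{asymp_expansion}, in particular the leading coefficient $\rho \mapsto \int \rho(\fp_\alpha)\,d\nu$, hence the measure $(\fp_\alpha)_*\nu$. The admissibility hypotheses force $V$ to be proper and to tend to $+\infty$ on the boundary of $\bbR^n_+$, which makes $\alpha \mapsto (\fp_\alpha)_*\nu$ weak-$*$ continuous on $\bbR^n_+$; since $\bbQ^n_+$ is dense, $(\fp_\alpha)_*\nu$ --- equivalently the Weyl function of $W_\alpha$ --- is then known for every $\alpha \in \bbR^n_+$.

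Finally I would reconstruct $V$. Taking the Laplace transform in $E$ converts the datum $E \mapsto \mu(\{W_\alpha < E\})$ into $\int_{\bbR^n_+} \exp\bigl( -\lambda \sum_j \alpha_j^2/s_j - \lambda V(s) \bigr)\,d\mu(s)$, known for all $\lambda > 0$ and all $\alpha \in \bbR^n_+$; the substitution $u_j = 1/s_j$, $\sigma_j = \lambda \alpha_j^2$ then presents this, for each fixed $\lambda$, as the Laplace transform in the variable $\sigma$ of the measure $e^{-\lambda V(1/u)}$ against an explicit smooth positive density on $\bbR^n_+$. Injectivity of the Laplace transform recovers $e^{-\lambda V(1/u)}$, hence $V$. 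I expect this last inversion --- and, above all, pinning down the exact class of potentials (the ``admissible'' ones of \S\ref{sec:inverse_spectral_results}) for which all of the integrals above converge and all of the transforms are injective --- to be the crux; once Theorem~\ref{mainthm} and the explicit form of $\op_h$ are in hand, the intervening steps are essentially formal.
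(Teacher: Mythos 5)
Your reduction of the problem to the leading term of Theorem~\ref{mainthm} coincides with the paper's Lemma~\ref{schrodinger1_lemma}: same moment map, same observation that all isotropy groups of the lifted action are subtori (so the second line of \eqref{asymp_expansion} is empty), and the same reduced symbol $\fp_\alpha(r,p)=\sum_j\bigl(p_j^2+\alpha_j^2/r_j^2\bigr)+V(r_1^2,\dots,r_n^2)$ with Lebesgue reduced measure. Where you genuinely diverge is in the inversion. The paper throws away almost all of the measure $(\fp_\alpha)_*\nu$ and keeps only the infimum of its support, $m(\alpha)=\min_s\bigl(\sum_j\alpha_j^2/s_j+V(s)\bigr)$; Lemma~\ref{schrodinger2_lemma} then recognizes $m$ as the Legendre transform of $F(t)=-V(1/t)$ evaluated at $a_j=\alpha_j^2$, and the two admissibility conditions \eqref{admissible1}--\eqref{admissible2} are used precisely to make $\partial F/\partial t$ a diffeomorphism onto $\bbR^n_+$ so that the Legendre duality can be undone. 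You instead use the full pushforward measure, strip off the momentum integration by an Abel/fractional-integration inversion (the same device the paper uses in its $S^2$ argument), Laplace-transform in the energy, and invert a multivariate Laplace transform in $\sigma_j=\lambda\alpha_j^2$. Both routes are sound. Yours uses more of the espectral data but less of the convexity structure, and would in principle cover non-convex proper potentials for which $\int e^{-\lambda V}\,d\mu$ converges and the minimum alone would not suffice; the paper's route is sharper in that it needs only the bottom of the reduced symbol and makes the role of admissibility transparent. Two small corrections to your write-up: admissibility does \emph{not} force $V$ to blow up at the boundary of $\bbR^n_+$ (e.g.\ $V=s_1+\dots+s_n$ is admissible and smooth up to the boundary) --- the properness of $W_\alpha$ there comes from the centrifugal terms $\alpha_j^2/s_j$, not from $V$; and the convergence of your Laplace transforms is not automatic from properness alone but does follow from \eqref{admissible1}--\eqref{admissible2}, since convexity plus positive partials gives at-least-linear growth of $V$, so the ``crux'' you flag is indeed where admissibility must be invoked.
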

In the case of toric {orbifolds}, we prove that the asymptotic equivariant spectrum of the Laplacian of any {toric K\"ahler} metric determines the {(unlabeled)} moment polytope of a generic toric orbifold.  In particular, this implies that the asymptotic equivariant spectrum determines the orbifold up to equivariant biholomorphism.
\begin{thm}
Let $(X,\omega)$ be a generic toric orbifold endowed with any toric K\"ahler metric. The moment polytope of $X$ is determined by the asymptotic equivariant spectrum of the Laplacian{, up to two choices and up to translation.}
\end{thm}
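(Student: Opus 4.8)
The plan is to feed $\op_h=h^2\Lap$ into the trace formula of Theorem~\ref{mainthm} and to extract from its leading terms the facet data of the moment polytope $P=\{x\in\bbR^n:\langle x,v_k\rangle\ge\lambda_k\}$ of the complex $n$-dimensional toric orbifold $X$ — the primitive inward normals $v_k$, the positions $\lambda_k$, and the orbifold labels $m_k$ — and then to rebuild $P$ from these. Fix $\alpha\in\bbR^n$ with $\langle\alpha,v_k\rangle\ne 0$ for every facet $k$; this forces the level set $\Phi^{-1}(\alpha)$ of the cotangent-lift moment map to miss the cotangent fibres over every face of $P$ (a point over a face satisfies $\langle\Phi,v_k\rangle=0$ for the facets through it), so $\alpha$ is a regular value and $\Phi^{-1}(\alpha)$ lies over the open orbit $X^\circ\cong P^\circ\times\bbT^n$. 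Letting $\alpha$ in addition range over rescalings of honest weights, Theorem~\ref{mainthm} recovers the full $h$-expansion of $\mu_{\alpha/h}$ from the asymptotic equivariant spectrum; from it I read off (i) the leading smooth term $(\fp_\alpha)_*\nu$, and (ii) the finite-isotropy data — the angles $\theta_r$, the linear maps $A_r$, and the measures $\mu_{r,s}$ attached to the second line of \eqref{asymp_expansion} (these separate out cleanly, being oscillatory in the weight $\alpha/h$, even though for an orbifold they are of the same or larger order in $h$ as the smooth term). Since the asymptotic \emph{equivariant} spectrum cannot distinguish a weight from its inverse (conjugation of eigenfunctions), all of this is known only modulo $\alpha\leftrightarrow-\alpha$; on the other hand the ordinary spectrum, which is included, gives $\vol(P)=\vol(X)/(2\pi)^n$ by the usual Weyl law.

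The second ingredient is the explicit shape of $(\fp_\alpha)_*\nu$. Writing the toric K\"ahler metric on $X^\circ$ through its symplectic potential $u\colon P^\circ\to\bbR$ (Guillemin's formula $\s=\sum u_{ij}\,dp_i\,dp_j+\sum u^{ij}\,d\theta_i\,d\theta_j$), the symplectic reduction $(T^*X)_\alpha=\Phi^{-1}(\alpha)/\bbT^n$ is the smooth manifold $T^*P^\circ$ with its canonical form, and the leading symbol $\fp_0=|\xi|^2_{\s^*}$ of $h^2\Lap$ reduces to
\[
\fp_\alpha(p,\sigma)=\langle\sigma,(\hess u(p))^{-1}\sigma\rangle+\langle\alpha,\hess u(p)\,\alpha\rangle .
\]
An elementary ellipsoid-volume computation in the fibre variable $\sigma$ then identifies the distribution function of the spectral invariant $(\fp_\alpha)_*\nu$ as
\[
F_\alpha(E)=c_n\int_{P^\circ}\bigl(E-\langle\alpha,\hess u(p)\,\alpha\rangle\bigr)_+^{\,n/2}\,\sqrt{\det\hess u(p)}\;dp ,
\]
with $c_n$ the volume of the unit ball in $\bbR^n$. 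Thus $F_\alpha$ is known for every regular $\alpha$.

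The crux is to disentangle $P$ from these formulas, whose integrand involves the unknown potential $u$ over all of $P^\circ$; only the boundary behaviour of $u$ is universal. Near facet $k$ the canonical form of toric K\"ahler metrics forces $\hess u$ to blow up like a $\tfrac{1}{\ell_k}$-multiple of $v_k\otimes v_k$ with a coefficient recording the label $m_k$, where $\ell_k=\langle\cdot,v_k\rangle-\lambda_k$ is the defining function of the facet. The plan is to let $\alpha$ approach the wall $v_k^{\perp}$: once $\langle\alpha,v_k\rangle$ is small, $\langle\alpha,\hess u\,\alpha\rangle\approx\tfrac{\langle\alpha,v_k\rangle^2}{2m_k\ell_k}+O(1)$ stays bounded down to $\ell_k\sim\langle\alpha,v_k\rangle^2$, so a boundary layer of definite width along facet $k$ enters the sublevel set defining $F_\alpha(E)$; rescaling $\ell_k$ by $\langle\alpha,v_k\rangle^2$ localizes the integral to the transverse Guillemin model — exactly as for $S^1$ acting on $S^2$, where letting $\alpha\to 0$ produces the term $-\pi|\alpha|\sum_j 1/m_j$ in $F_\alpha$ — and, more, identifies the full scaling limit with the analogous invariant of the toric divisor $Z_k\subset X$ (a complex suborbifold carrying the restricted toric K\"ahler metric, with moment polytope exactly facet $k$). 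This yields an induction on $n$: the base case $n=1$ (an interval) is handled directly, with the length coming from the ordinary Weyl law and the two labels from the finite-isotropy data; the inductive step recovers each $v_k$ as the normal to the wall crossed, recovers each facet $Z_k$ — as a labeled polytope, up to translation and sign — from its asymptotic equivariant spectral data reconstituted in the limit (its normalized facet volume being supplied by the $\mu_{r,s}$ measures, which are supported on the fixed-point loci of the $\theta_r$), and recovers the labels $m_k$ along the facets from the $\theta_r,A_r$. The genericity hypothesis on $X$ rules out accidental coincidences among the walls and among the facet data, so the pieces glue along their shared faces into $P$, uniquely up to an overall translation and the single global sign coming from the $\alpha\leftrightarrow-\alpha$ ambiguity (the requirement that the data actually bound a polytope kills the mixed sign choices). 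Attaching the labels gives the labeled moment polytope up to translation and these two choices, and by the Lerman–Tolman classification of symplectic toric orbifolds this determines $(X,\omega)$, hence $X$ up to equivariant biholomorphism.

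I expect the main obstacle to be precisely the localization in the previous paragraph: showing that the wall-crossing limit of the \emph{global} invariant $F_\alpha$ (and of the finite-isotropy measures) is governed entirely by the geometry along the single facet $k$ — that replacing $u$ by its Guillemin normal form there, and the induced facet metric by its own normal forms, produces only lower-order errors, uniformly in the spectral parameter $E$. This is a delicate blow-up/boundary-layer analysis carried out simultaneously at every facet, and it is what makes the metric-independent polytope data emerge from the metric-dependent measures. A secondary, lower-stakes difficulty is the bookkeeping of the translation and $\pm$ ambiguities through the induction, together with the verification that genericity removes everything else.
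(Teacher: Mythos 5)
Your identification of the leading term agrees with the paper: the distribution function
$F_\alpha(E)=c_n\int_{P}\bigl(E-\alpha^{t}H\alpha\bigr)_{+}^{n/2}\sqrt{\det H}\,dx$, with $H=\hess(\s)$ the Hessian of the symplectic potential, is exactly what the paper's Lemma \ref{toric_laplacian1_lemma} yields after the ellipsoid-volume computation in the fibre. After that point there are two genuine gaps. First, your entire mechanism for recovering the labels and the facet volumes --- the finite-isotropy data $\theta_r$, $A_r$, $\mu_{r,s}$ from the second line of \eqref{asymp_expansion} --- is vacuous in the toric setting: as the paper notes at the start of \S\ref{weighted}, the $\bbT^n$-action on a toric orbifold is free over $\interior(P)$, and the only points with nontrivial isotropy lie over the boundary and have stabilizers of dimension at least one, so they are not in $X_0$ and contribute nothing to the expansion. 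This is precisely why the theorem recovers only the \emph{unlabeled} polytope (hence only the equivariant biholomorphism class via the fan, not the symplectomorphism class); it also invalidates your $n=1$ base case and your proposed source of facet volumes. Second, the step you yourself flag as the main obstacle --- localizing the wall-crossing limit of $F_\alpha$ to a single facet and controlling, uniformly in $E$, the error from replacing $\s$ by its Guillemin normal form --- is the whole analytic content of the argument and is not carried out.

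The paper avoids any boundary-layer analysis by a different route: take $\rho_\param(s)=e^{i\param s}$ to reduce the leading term to $\int_P\sqrt{\det H}\,e^{i\param\alpha^{t}H\alpha}\,dx$; integrate against $\rho(\alpha)e^{-i\param\alpha^{t}u}$ and apply stationary phase in $\alpha$ as $\param\to\infty$, which cancels the unknown factor $\sqrt{\det H}$; analytically continue in $\param$ to conclude that $\int_P e^{-t\,u^{t}H^{-1}u}\,dx$ is determined for every $t>0$; and let $t\to\infty$. The metric-independent boundary identity $\n_i^{t}H^{-1}\n_i/l_i=1$ on $F_i$ (Lemma \ref{boundary_behavior}) shows this limit detects exactly the facet normals and the facet volumes, and Minkowski's uniqueness theorem, together with the genericity hypotheses (no parallel facets, no subpolytopes), reconstructs $P$ up to two choices and translation. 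To salvage your approach you would need both the uniform localization estimates and an alternative source for the facet volumes; the labels themselves are not recoverable from this spectral data.
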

In the case of $S^2$, we go further to obtain some partial inverse spectral results on the metric. 
\begin{thm}
Given any $S^1$-invariant metric on $S^2$ with symplectic potential $\s$, if $\ddot \s$ is even and convex, then $\s$ is determined by the asymptotic equivariant spectrum of the Laplacian of the metric.
\end{thm}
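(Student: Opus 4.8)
The strategy is to feed the trace formula of Theorem~\ref{mainthm} into an Abel-transform inversion. By that theorem the leading term of $\mu_{\alpha/h}$ is $(2\pi h)^{m-n}(\fp_\alpha)_*\nu$, so for every regular value $\alpha$ of the lifted moment map $\Phi\colon T^*S^2\to\bbR$ the asymptotic equivariant spectrum of $\Lap$ determines the push-forward measure $\sigma_\alpha:=(\fp_\alpha)_*\nu$ on $\bbR$; here $n=2$, $m=1$, the only critical value of $\Phi$ is $0$ (the cotangent fibres over the two fixed poles, where the generating vector field vanishes), so every $\alpha\neq 0$ is admissible, and the rotation action has no finite isotropy groups, so the second line of~\eqref{asymp_expansion} is absent and the remaining terms are of strictly lower order in $h$. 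First I would fix coordinates: the length of the moment interval of $S^2$ is spectrally determined (e.g.\ via the Weyl term, since the Riemannian and symplectic volumes coincide for a toric K\"ahler metric), and the evenness hypothesis is used to centre it, so that $S^2$ is written in action--angle coordinates $(x,\theta)\in(-a,a)\times(\bbR/2\pi\bbZ)$ with symplectic form $dx\wedge d\theta$ and moment map $x$. In these coordinates the metric with symplectic potential $\s$ is $\psi\,dx^2+\psi^{-1}\,d\theta^2$ with $\psi:=\ddot\s>0$, and smoothness at the poles forces $\psi\to\infty$ as $|x|\to a$. Since $\psi$ is even and convex it is unimodal with minimum $m_0:=\psi(0)$ and strictly increasing on $[0,a)$, so $\psi|_{[0,a)}$ has an inverse $\psi^{-1}\colon[m_0,\infty)\to[0,a)$.

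Next I would compute $\sigma_\alpha$ explicitly. The lifted $S^1$-action has moment map $\Phi=\eta$, the momentum conjugate to $\theta$, so $(T^*S^2)_\alpha=T^*(-a,a)$ with coordinates $(x,\xi)$, reduced symplectic volume $\nu=dx\,d\xi$, and reduced principal symbol
\[
\fp_\alpha(x,\xi)=\frac{\xi^2}{\psi(x)}+\alpha^2\psi(x).
\]
Carrying out the $\xi$-integration in $\int_{-a}^{a}\int_{\bbR}\rho\big(\fp_\alpha(x,\xi)\big)\,d\xi\,dx$ and then substituting $u=\psi(x)$ on $[0,a)$ (doubling by evenness), one finds that $\sigma_\alpha$ has density
\[
G_\alpha(t)=\int_{\{u\ge m_0:\ \alpha^2 u\le t\}}\frac{w(u)}{\sqrt{t-\alpha^2 u}}\,du,\qquad w(u):=2\sqrt{u}\,(\psi^{-1})'(u).
\]
As a check, for the round metric $\psi=(1-x^2)^{-1}$ this integral collapses to $\pi/\sqrt t$, matching the semi-classical count of the eigenvalues $h^2\ell(\ell+1)$ with $\ell\ge\alpha/h$.

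After the substitution $v=\alpha^2 u$ this reads $G_\alpha(t)=\alpha^{-2}\int_{-\infty}^{t}(t-v)^{-1/2}\,w(v/\alpha^2)\,\mathbf 1_{\{v\ge\alpha^2 m_0\}}\,dv$, i.e.\ $G_\alpha$ is, up to the explicit factor $\alpha^{-2}$, the Abel transform (half-order fractional integral) of $v\mapsto w(v/\alpha^2)\mathbf 1_{\{v\ge\alpha^2 m_0\}}$. Since the Abel transform is injective, the datum $\sigma_\alpha$ for a single admissible $\alpha$ (say $\alpha=1$) determines $w$; integrating with the initial condition $\psi^{-1}(m_0)=0$, where $m_0$ is read off as the left endpoint of $\operatorname{supp}\sigma_\alpha$ divided by $\alpha^2$, recovers $\psi^{-1}$, hence $\psi=\ddot\s$, hence $\s$ itself --- the symplectic potential being determined by its second derivative up to the affine functions that leave the metric unchanged.

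The hard part will be making this chain of identities rigorous at the level of measures/distributions in the presence of two singularities: the Abel kernel $(t-\alpha^2u)^{-1/2}$, and the blow-up of $(\psi^{-1})'$ as $u\to\infty$ (coming from $\psi\to\infty$ at the poles) and possibly at $u=m_0$ if $\psi$ has a flat minimum. One must check that $w$ lies in a function class for which the Abel transform is defined and injective, and that the leading-order measure in~\eqref{asymp_expansion} can indeed be cleanly isolated --- the latter comparatively painless here since there are no finite-isotropy contributions. A secondary point worth isolating is that ``even and convex'' enters in exactly two ways: convexity makes each sublevel set $\{\psi\le c\}$ a single interval (unimodality of $\psi$), and evenness folds the two monotone branches of $\psi$ onto the single density $w$, so that $\sigma_\alpha$ pins down $\psi^{-1}$ rather than merely a sum of left/right inverse derivatives.
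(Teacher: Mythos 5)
Your proposal is correct and follows essentially the same route as the paper: isolate the leading term $(\fp_\alpha)_*\nu$ of Theorem~\ref{mainthm}, compute the reduced symbol $\xi^2/\ddot\s+\alpha^2\ddot\s$ on the reduced cotangent space in action--angle coordinates, fiber-integrate and substitute $u=\ddot\s(x)$ (using evenness and convexity to fold onto a single monotone branch), and invert the resulting half-order Abel (fractional integration) transform to recover $\ddot\s$ and hence the metric. The only cosmetic differences are that the paper normalizes the moment interval to $[-1,1]$ rather than extracting its length from the Weyl term, and it first shifts by the spectrally determined minimum $c$ of $\ddot\s$ before applying the Abel inversion.
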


Our analytic result is of a similar flavor to results of Br\"{u}ning and Heintze \cite{bh1,bh2} and Donnelly \cite{d}, although we are concerned with different spectral measures.  These authors {consider} the Laplace operator restricted to the space of $G$-invariant functions for general compact Lie groups $G$, and determine the leading term of the asymptotic expansion of the heat kernel corresponding to {the heat kernel of the Laplacian on this space}.  In particular, they prove that {the eigenvalues of this heat kernel} satisfy a Weyl law similar to the usual Weyl law for the Laplacian on the full space of functions on $X$.  Moreover, Br\"{u}ning and Heintze \cite{bh2} obtain a full asymptotic expansion of the heat trace of the equivariant Laplacian, giving the multiplicity with which a given representation occurs as a representation on the eigenspaces of the Laplacian.  The expansion becomes complicated if there are many different types of $G$ orbits; in particular, it contains logarithmic terms.  One problem encountered by Br\"{u}ning and Heintze and Donnelly is that the space $X/G$ generally is not a manifold, so accommodations must be made if one {wants} to think of $\Lap_{|\Sm(X)^0}$ as a Laplace operator on $X/G$.  The same difficulties appear for us, but unlike these other authors, we address them by using semi-classical weights.  We note that the subset of $X$ where the action of $G$ is locally free, which we denote by $X_0$, plays a fundamental role in these various approaches.

The main point for us is that $\alpha$ is a regular value of $\Phi$ if and only if $\Phi^{-1}(\alpha)$ is contained in $T^*X_0$. The fact that the operators $\op_h$ are invariant under the ${\bbT^m}$ action implies that their symbol carries over to $(T^*X)_\alpha$, and we show that we can identify $(T^*X)_\alpha$ with $T^*(X_0/{\bbT^m})$.  This seems to leave us with the same compactification problems that were faced in \cite{bh2}, but we will briefly explain why this is not the case.  The leading symbol $\p_0:T^*X\rightarrow \bbR$ of $h^2\Lap$ on $X_0$ is given by $\p_0(x,\xi)=||\xi||^2_x$, where $||\xi||^2_x$ denotes the Riemannian metric on $T_x^*X$.  We have an orthogonal splitting
$$
T^*X_0=\pi^*T^*(X_0/{\bbT^m})\oplus {\bbR^m}
$$
in terms of which
$$
\Phi^{-1}(\alpha)=\pi^*T^*(X_0/{\bbT^m})+\alpha.
$$
The restriction of $\p_0$ to $\Phi^{-1}(\alpha)$ is therefore given by
$\p_0(x,\eta)+||\alpha||^2_x$, where we write $\xi=\eta+\alpha$ with $\eta\in \pi^*T^*(X_0/{\bbT^m})$ because $\xi \in \Phi^{-1}(\alpha)$.  In other words, the restriction to $\Phi^{-1}(\alpha)$ of the leading symbol of $h^2\Lap$ becomes the function $\p_\alpha+V_\alpha$, 
where $\p_\alpha$ is the restriction to $\pi^*T^*(X_0/{\bbT^m})$ of the symbol $\p_0$ and $V_\alpha$ is a potential function given by $V_\alpha=||\alpha||^2_x$.  We can also identify $(T^*X)_\alpha=\Phi^{-1}(\alpha)/{\bbT^m}$ with $\pi^*T^*(X_0/{\bbT^m})/{\bbT^m}=T^*(X_0/{\bbT^m})$ and, via this identification, the reduced symbol of $h^2\Lap$ becomes the function 
$\pi^*\fp_\alpha=\fp_0+V_\alpha$.  The upshot of our approach is that $V_\alpha$ tends to infinity at the boundary points in $X \setminus X_0$.  Thus the function $\p_\alpha:T^*(X_0/{\bbT^m})\rightarrow \bbR$ is proper and the push-forward measures appearing in the asymptotic expansion of $\mu_{\frac{\alpha}{h}}$ are well defined.

The paper is organized as follows. In \S\S \ref{sec:classred} and \ref{sec:quantum_reduc}, we give some background. We describe the reduced cotangent space $(T^*X)_\alpha$ and its identification with $T^*(X_0/{\bbT^m})$. This allows us to get an expression for the volume form in  $T^*(X_0/{\bbT^m})$ that we will use in applications in \S \ref{sec:inverse_spectral_results}.  In \S \ref{sec:counting} we explain how the measure defined in \eqref{spectral_measure} can be given by the trace of a semi-classical operator. This is rather standard and should be well understood by experts. Section \ref{sec:semi-classical} contains the proof of our asymptotic expansion; we use a well known asymptotic formula for the Schwartz kernel and the Lemma of Stationary Phase. This is very much in the spirit of \cite{gs1}, and the reader is encouraged to consult \cite{gs1} for more details. In \S \ref{sec:inverse_spectral_results}, we apply our analytic result to get our inverse spectral results. Section \ref{weighted} makes use of the terms arising from the second line of our asymptotic expansion \eqref{asymp_expansion} to obtain inverse spectral results on weighted projective spaces using an asymptotic equivariant spectrum.\\

{\noindent \textbf{Acknowledgements.}  The authors would like to thank Alejandro Uribe for useful discussions during the International Conference on Spectral Geometry held at Dartmouth College on July 19-23, 2010.  We are also grateful to the organizers of this conference for creating an environment in which the key ideas for this article began to take shape.}


\section{The reduced cotangent space}\label{sec:classred}

We start by giving some background on the symplectic geometry of the so-called reduced cotangent space, which is the symplectic quotient of the cotangent bundle by the lift of the group action. Let $X^n$ denote a manifold that admits an action $\action$ of a torus $\bbT^m$.

Given $g\in \bbT^m$, let $\action_g$ denote the diffeomorphism of $X$ induced by the action of $g$. An element $\xi$ in the Lie algebra $\mathfrak{g}=\bbR^m$ of $\bbT^m$ determines a vector field on $X$ which we denote by $V_\xi$. This vector field is defined by
\begin{equation}\label{eqn:V_xi}
V_\xi(x):=\frac{d\action_{e^{it\xi}}(x)}{dt} \Biggl\vert_{t=0}\ .
\end{equation}
The action of $\bbT^m$ on $X$ lifts to an action $\mathcal{\action}$ on $T^*X$ in the obvious way: given $g\in \bbT^m$,
$$
g.(x,v^*)=(\action_g(x),v^*\circ d\action_{g^{-1}})\in T^*_{\action_g(x)}X \ .
$$

\begin{lemma}
The action $\action$ of $\bbT^m$ on $T^*X$ is Hamiltonian with moment map $\Phi:T^*X\rightarrow {\bbR^m}$ given by
\begin{equation}\label{eqn:momap}
\langle \Phi(x,v^*),\xi \rangle=v^*(V_\xi(x)).
\end{equation}
\end{lemma}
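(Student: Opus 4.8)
The plan is to recognize $\Phi$ as the moment map that automatically accompanies the \emph{canonical} lift of a group action to a cotangent bundle, exploiting the fact that this lift preserves the tautological one-form. Recall that $T^*X$ carries the tautological (Liouville) one-form $\theta$, characterized by $\theta_{(x,v^*)}(w) = v^*\big(d\pi_{(x,v^*)}(w)\big)$ for $w \in T_{(x,v^*)}(T^*X)$, where $\pi : T^*X \to X$ is the bundle projection, and the canonical symplectic form is $\omega = -d\theta$ (this sign is a convention that should be matched to the one used elsewhere in the paper). The three things to check are: that each lifted infinitesimal generator is Hamiltonian, that its Hamiltonian function is $v^*(V_\xi(x))$, and that the resulting map $\Phi$ satisfies the (here essentially trivial) equivariance and linearity axioms of a moment map.

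The crucial step is that the lifted diffeomorphisms preserve $\theta$, i.e. $\action_g^*\theta = \theta$ for every $g \in \bbT^m$. This is a short direct computation: since $\pi \circ \action_g = \action_g \circ \pi$, for $w \in T_{(x,v^*)}(T^*X)$ one gets $(\action_g^*\theta)(w) = (v^*\circ d\action_{g^{-1}})\big(d\pi(d\action_g w)\big) = v^*\big(d\action_{g^{-1}}\, d\action_g\, d\pi(w)\big) = v^*(d\pi(w)) = \theta(w)$. Differentiating $\action_{e^{t\xi}}^*\theta = \theta$ at $t=0$ shows that the vector field $\widetilde V_\xi$ generated on $T^*X$ by the lifted action satisfies $\mathcal{L}_{\widetilde V_\xi}\theta = 0$. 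Cartan's formula then gives $0 = d(\iota_{\widetilde V_\xi}\theta) + \iota_{\widetilde V_\xi} d\theta$, hence $\iota_{\widetilde V_\xi}\omega = d\big(\iota_{\widetilde V_\xi}\theta\big)$, so $\widetilde V_\xi$ is the Hamiltonian vector field of $\Phi^\xi := \theta(\widetilde V_\xi)$. Because the lifted action covers the action on the base, $d\pi(\widetilde V_\xi(x,v^*)) = V_\xi(x)$, and therefore $\Phi^\xi(x,v^*) = \theta_{(x,v^*)}(\widetilde V_\xi) = v^*(V_\xi(x))$, which is precisely $\langle \Phi(x,v^*),\xi\rangle$ as claimed.

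Finally I would dispatch the remaining moment map requirements: linearity of $\xi \mapsto \Phi^\xi$ is immediate from linearity of $\xi \mapsto V_\xi$; and since $\bbT^m$ is abelian, equivariance of $\Phi$ reduces to $\action_g$-invariance of each $\Phi^\xi$, which follows again from $\action_g^*\theta = \theta$ together with $d\action_g \circ V_\xi = V_\xi \circ \action_g$ (commutativity of the torus), or equivalently from the vanishing of all Poisson brackets $\{\Phi^\xi,\Phi^\eta\} = \Phi^{[\xi,\eta]} = 0$. I do not expect a genuine obstacle here; the only care required is in the sign convention relating $\theta$ and $\omega$ (and hence the sign in $\iota_{\widetilde V_\xi}\omega = \pm\, d\Phi^\xi$), and in the bookkeeping of where $d\action_g$ versus $d\action_{g^{-1}}$ appears in the definition of the cotangent lift — which is exactly what makes the invariance of $\theta$ work out.
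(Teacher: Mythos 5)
Your argument is correct and is exactly the classical cotangent-lift proof that the paper itself invokes by citing \cite[II.26]{gs1} rather than writing out: invariance of the tautological one-form under the lifted action, Cartan's formula to identify $\iota_{\widetilde V_\xi}\omega$ with $d(\theta(\widetilde V_\xi))$, and the projection identity $d\pi(\widetilde V_\xi)=V_\xi$ to evaluate the Hamiltonian as $v^*(V_\xi(x))$. No gaps; your handling of the sign convention and of equivariance (trivial here since $\bbT^m$ is abelian and the lift preserves $\theta$) is fine.
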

See \cite[II.26]{gs1} for a proof of this classical fact.  One may also consider, for each $x\in X$, the map
$$
{\bbR^m} \rightarrow T_xX,\,\,  \xi \mapsto V_\xi(x);
$$
the moment map $\Phi_{|T_x^*X}$ is the transpose of this map.

Given a Hamiltonian group action on a symplectic {manifold}, one may construct a {\it symplectic quotient}. More precisely, if ${\bbT^m}$ acts on a symplectic {manifold} $(M,\omega)$ with moment map $\phi$, for every regular value $\alpha$ of the moment map we can define
$$
M_\alpha=M//_\alpha \ {\bbT^m}:=\phi^{-1}(\alpha)/{\bbT^m}.
$$
Because the moment map is ${\bbT^m}$-invariant this quotient is well-defined. It carries a symplectic form $\omega_\alpha$ satisfying
$$
\text{pr}^*\omega_\alpha=\iota^*\omega,
$$
where $\text{pr}:\phi^{-1}(\alpha)\rightarrow \phi^{-1}(\alpha)/{\bbT^m}$ is the projection and $\iota:\phi^{-1}(\alpha)\rightarrow M$ is the inclusion. In our case since the action of $\bbT^m$ on $T^*X$ is Hamiltonian, we have a symplectic quotient
$$
(T^*X)_\alpha:=T^*X//_\alpha \ \bbT^m=\Phi^{-1}(\alpha)/\bbT^m \ .
$$
This space will play a crucial role in our results, and we will refer to it as the \emph{reduced cotangent space}.

It is clear from the definition of symplectic quotient that we must understand the regular values of the moment map.  We characterize these regular values in our setting. We will use the following fact about regular values of moment maps, a proof of which can be found in  \cite[II.26]{gs1}.
\begin{lemma}\label{lemma:regmomentmap}
Given a Hamiltonian action of a Lie group $G$ on a symplectic manifold $M$ with moment map $\Phi$, $\alpha\in \mathfrak{G}^*$ is a regular value of $\Phi$ if and only if the action of $G$ is locally free at points $p\in \Phi^{-1}(\alpha)$.
\end{lemma}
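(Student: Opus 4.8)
The plan is to reduce the statement to a pointwise linear-algebra fact about the differential $d\Phi_p$, exploiting only the defining property of a moment map. Recall that $\alpha$ is a regular value of $\Phi$ precisely when $d\Phi_p\colon T_pM\to\mathfrak g^*$ is surjective for every $p\in\Phi^{-1}(\alpha)$, and that $G$ acts locally freely at $p$ exactly when the stabilizer $G_p$ is discrete, equivalently when its Lie algebra $\mathfrak g_p=\{\xi\in\mathfrak g : V_\xi(p)=0\}$ is trivial. Hence it suffices to prove, for each fixed $p$, the equivalence: $d\Phi_p$ is surjective $\iff$ $\mathfrak g_p=0$. Quantifying this over all $p\in\Phi^{-1}(\alpha)$ then gives the lemma.

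First I would record the infinitesimal form of the moment map condition: for every $\xi\in\mathfrak g$ one has $d\langle\Phi,\xi\rangle=\iota_{V_\xi}\omega$, so that for all $v\in T_pM$,
\[
\langle d\Phi_p(v),\xi\rangle=\omega_p\big(V_\xi(p),v\big).
\]
Next I would identify the transpose map $d\Phi_p^{*}\colon\mathfrak g\to T_p^*M$: the displayed identity says exactly that $d\Phi_p^{*}(\xi)$ is the covector $v\mapsto\omega_p(V_\xi(p),v)$, i.e.\ $d\Phi_p^{*}(\xi)=\iota_{V_\xi(p)}\omega_p$. Since a linear map is surjective if and only if its transpose is injective, $d\Phi_p$ is onto iff $d\Phi_p^{*}$ has trivial kernel. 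Finally, because $\omega_p$ is nondegenerate, $\iota_{V_\xi(p)}\omega_p=0$ holds iff $V_\xi(p)=0$; therefore $\ker d\Phi_p^{*}=\mathfrak g_p$, and $d\Phi_p$ is surjective iff $\mathfrak g_p=0$, which is the desired pointwise statement.

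I do not expect a genuine obstacle here: the argument is soft and formal, essentially a dualization of the relation $\iota_{V_\xi}\omega=d\langle\Phi,\xi\rangle$, and it is insensitive to the choice of sign convention for the moment map since only kernels are used. The only points calling for mild care are keeping the canonical pairings $T_pM\times T_p^*M$ and $\mathfrak g\times\mathfrak g^*$ straight when passing to the transpose, and invoking the fact that for a Lie group a discrete stabilizer is the same as a trivial infinitesimal stabilizer — this is where the Lie-group hypothesis (rather than merely a topological group) enters. As the paper already notes, this short computation is the content of the cited reference \cite[II.26]{gs1}.
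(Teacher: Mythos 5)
Your argument is correct and is precisely the standard dualization of the moment map identity $\langle d\Phi_p(v),\xi\rangle=\omega_p(V_\xi(p),v)$; the paper does not reproduce a proof but defers to \cite[II.26]{gs1}, where this is exactly the argument given. No gaps.
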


\begin{lemma}\label{lemma:regvalues}
Let $\Phi:T^*X\rightarrow {\bbR^m}$ be the moment map of the Hamiltonian action of $\bbT^m$ on $T^*X$. Then $\alpha\in {\bbR^m}$ is a regular value of $\Phi$ if and only if $\Phi^{-1}(\alpha)\subset T^*X_0$, where $X_0$ is the set of points of $X$ where $\bbT^m$ acts locally freely.
\end{lemma}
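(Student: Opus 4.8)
The plan is to derive this from Lemma~\ref{lemma:regmomentmap}, which tells us that $\alpha$ is a regular value of $\Phi$ if and only if the lifted $\bbT^m$-action on $T^*X$ is locally free at every point of $\Phi^{-1}(\alpha)$. (Recall that for a compact group, local freeness at a point means the stabilizer is finite, equivalently discrete, equivalently has trivial Lie algebra.) So the task is to compare local freeness of the lifted action on $T^*X$ with local freeness of the base action on $X$. The first step I would carry out is a description of the infinitesimal stabilizer of a point $p=(x,v^*)\in T^*X$: writing $V_\xi$ for the generating vector field of $\xi\in\Lie$ on $X$ and, when $V_\xi(x)=0$, $A_\xi$ for the linearization of $V_\xi$ at its zero $x$ (equivalently the generator of the isotropy representation of $e^{t\xi}$ on $T_xX$), one checks — either by computing $d\langle\Phi,\xi\rangle$ at $p$ in vertical and horizontal directions, or directly from the cotangent-lift formula $g\cdot(x,v^*)=(\tau_g(x),v^*\circ d\tau_{g^{-1}})$ — that $\xi$ lies in the infinitesimal stabilizer of $p$ precisely when $V_\xi(x)=0$ and $v^*\circ A_\xi=0$. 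In particular the infinitesimal stabilizer of $p$ is contained in that of $x$.

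That containment already yields the easy implication. If $\Phi^{-1}(\alpha)\subset T^*X_0$, then every $p=(x,v^*)\in\Phi^{-1}(\alpha)$ lies over a point $x\in X_0$, so the stabilizer of $x$ is finite and hence so is that of $p$; thus the lifted action is locally free on all of $\Phi^{-1}(\alpha)$ and $\alpha$ is a regular value by Lemma~\ref{lemma:regmomentmap}.

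For the converse I would argue by contraposition: suppose some $p=(x,v^*)\in\Phi^{-1}(\alpha)$ has $x\notin X_0$, and construct a (possibly different) point of $\Phi^{-1}(\alpha)$ with non-discrete stabilizer. Fix $0\neq\xi$ in the Lie algebra of the stabilizer of $x$. By the transpose description recalled in the excerpt, $\Phi$ restricted to the fiber $T^*_xX$ is the transpose $L_x^{T}$ of $L_x\colon\Lie\to T_xX$, $\eta\mapsto V_\eta(x)$, so $\Phi^{-1}(\alpha)\cap T^*_xX=v^*+\operatorname{Ann}(\operatorname{im}L_x)$. I want an element $w^*$ of this affine subspace with the extra property $w^*\circ A_\xi=0$. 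Two linear-algebra facts, both special to the abelian compact setting, make this possible: (a) since $\bbT^m$ is abelian the flows of $V_\xi$ and $V_\eta$ commute, so $A_\xi$ annihilates $V_\eta(x)$ for every $\eta$, i.e.\ $\operatorname{im}L_x\subset\ker A_\xi$; and (b) $A_\xi$ is semisimple — it generates a one-parameter subgroup of the compact isotropy group acting on $T_xX$, hence is skew-symmetric in an invariant inner product — so $T_xX=\ker A_\xi\oplus\operatorname{im}A_\xi$. Combining these gives $\operatorname{im}L_x\cap\operatorname{im}A_\xi=0$, and passing to annihilators, $\operatorname{Ann}(\operatorname{im}L_x)+\ker A_\xi^{T}=T^*_xX$. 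Writing $v^*=a+w^*$ with $a\in\operatorname{Ann}(\operatorname{im}L_x)=\ker L_x^{T}$ and $w^*\in\ker A_\xi^{T}$, we get $L_x^{T}w^*=L_x^{T}v^*=\alpha$, so $p'=(x,w^*)\in\Phi^{-1}(\alpha)$; and by the first paragraph $\xi\neq0$ lies in the infinitesimal stabilizer of $p'$, so the lifted action is not locally free at $p'$ and $\alpha$ is a critical value.

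The main obstacle is precisely this converse direction. Local freeness of the lifted action does \emph{not} descend pointwise to the base — a generic covector over a point $x\notin X_0$ typically has trivial stabilizer — so one cannot merely reverse the inclusion of stabilizers; one must exhibit, over the bad point $x$, a covector inside $\Phi^{-1}(\alpha)$ that actually detects the continuous isotropy. Facts (a) and (b) about the linearized isotropy operator $A_\xi$ are exactly what guarantee such a covector exists, and are the heart of the argument.
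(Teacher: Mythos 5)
Your proof is correct and takes essentially the same route as the paper's: the easy direction via the containment of stabilizers $G_p\subset G_x$, and the converse by exhibiting, over a point $x\notin X_0$, a covector in $\Phi^{-1}(\alpha)\cap T^*_xX$ that is fixed by the continuous isotropy. The only difference is in how that covector is produced: where you build it by hand from the decomposition $T_xX=\ker A_\xi\oplus\operatorname{im}A_\xi$ and the inclusion $\operatorname{im}L_x\subset\ker A_\xi$, the paper simply averages $v^*$ over the compact group $G_x$ acting on $T^*_xX$ (using that $\Phi\vert_{T^*_xX}$ is $G_x$-invariant because the group is abelian), which is exactly the projection onto the fixed subspace that your linear algebra constructs.
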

\begin{proof}
Let $p=(x, \xi)$ be a point in $\Phi^{-1}(\alpha)$.  If $x$ is in $X_0$, then the isotropy group $G_x$ of $x$ is finite.   Since the isotropy group $G_p$ of $p$ is contained in $G_x$, it too is finite.  Hence by Lemma \ref{lemma:regmomentmap}, $\alpha$ is a regular value of $\Phi$.

Now suppose $x \in X \setminus X_0$.  Then $G_x$ is a torus of positive dimension.  Moreover, $G_x$ acts linearly on $T^*_x$. The moment map $\Phi \vert_{T_x^*}$ is $G_x$-invariant because our Lie group is abelian. Therefore we can average $\xi$ by this $G_x$ action to get a $\xi$ that is also $G_x$-invariant and continues to satisfy $\Phi(x,\xi) = \alpha$.  Thus $p= (x, \xi)$ has isotropy group $G_p=G_x$ and hence by Lemma \ref{lemma:regmomentmap} $\alpha$ is not a regular value of $\Phi$.
\end{proof}

Henceforth we assume that $X$ is endowed with a fixed Riemannian metric that is $\bbT^m$-invariant. The metric allows us to split $T_xX$ as $T_x\bbT^m.x$ and its orthogonal complement, where $\bbT^m.x$ denotes the $\bbT^m$-orbit of $x$:
\begin{equation}\label{eqn:splitting}
T_xX=T_x\bbT^m.x\oplus (T_x\bbT^m.x)^\perp .
\end{equation}
{Let $\mathfrak{stab}_x$ denote the Lie algebra of the stabilizer group of $x$.}
There is a natural identification between $T_x\bbT^m.x$ and {$\bbR^m/\mathfrak{stab}_x$} given by
$$
\xi \,\text{mod}\,{\mathfrak{stab}_x}\rightarrow V_\xi(x),
$$
so $\alpha\in {\bbR^m}$ determines an element in $T^*_xX$ via the formula
\begin{equation}\label{eqn:alphasharp}
\alpha^\sharp_x(V_\xi(x)+v)=\alpha(\xi),
\end{equation}
where $v\in (T_x\bbT^m.x)^\perp$. 
The condition that $(x,v^*) \in \Phi^{-1}(\alpha)$ can be expressed as
$$
v^*={\alpha^\sharp_x} \text{ on } T_x\bbT^m.x \ .
$$

As we mentioned in the introduction, our approach is related to that of Br\"uning and Heintze \cite{bh1}. These authors think of the Laplacian restricted to equivariant functions as being related to the Laplacian of a line bundle on $X/G$, but they need to take into account the fact that the action of $G$ is not free. We instead associate to the Laplacian on equivariant functions an object on $(T^*X)_\alpha$. As the following lemma shows, the two approaches are not that different.  Note that for these authors $X_0$ denotes the subset of $X$ where the action of $G$ is free. In our setting $X_0$ denotes the subset of $X$ where the action is locally free.

\begin{lemma}\label{lemma:ident}
Let $\bbT^m$ act by isometries on $(X,g)$. There is an identification of $(T^*X)_\alpha$ with $T^*(X_0/\bbT^m)$.
 \end{lemma}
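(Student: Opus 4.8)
The plan is to build the identification directly from the orthogonal splitting \eqref{eqn:splitting} and the description of $\Phi^{-1}(\alpha)$ in terms of the map $\alpha^\sharp$, restricting attention to $X_0$ (which is legitimate by Lemma \ref{lemma:regvalues}, since a regular value $\alpha$ forces $\Phi^{-1}(\alpha)\subset T^*X_0$). First I would work fiberwise: over a point $x\in X_0$, the splitting $T_xX=T_x\bbT^m.x\oplus(T_x\bbT^m.x)^\perp$ dualizes to $T_x^*X=(T_x\bbT^m.x)^*\oplus\big((T_x\bbT^m.x)^\perp\big)^*$, and the condition $v^*=\alpha^\sharp_x$ on $T_x\bbT^m.x$ pins down the first component. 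Hence the fiber $\Phi^{-1}(\alpha)\cap T_x^*X$ is the affine subspace $\alpha^\sharp_x+\big((T_x\bbT^m.x)^\perp\big)^*$, which is canonically a torsor over $\big((T_x\bbT^m.x)^\perp\big)^*$. Since the $\bbT^m$-action on $X_0$ is locally free, $(T_x\bbT^m.x)^\perp$ is canonically identified with the tangent space to the orbifold quotient $X_0/\bbT^m$ at the image point, and so the quotient of this fiber by the (local) stabilizer, and then the collapsing along orbit directions, produces exactly $T^*_{[x]}(X_0/\bbT^m)$.

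Next I would globalize. Let $\pi:X_0\to X_0/\bbT^m$ be the quotient map (an orbifold submersion), and let $B=X_0/\bbT^m$. The metric-orthogonal distribution $(T\bbT^m.x)^\perp$ is a $\bbT^m$-invariant horizontal distribution, so $\pi^*T^*B$ sits inside $T^*X_0$ as the annihilator of the orbit tangent distribution; I would phrase the previous paragraph as the bundle statement $\Phi^{-1}(\alpha)=\pi^*T^*B+\alpha^\sharp$ inside $T^*X_0$, where $\alpha^\sharp$ is the section $x\mapsto\alpha^\sharp_x$. Translation by $-\alpha^\sharp$ (a $\bbT^m$-equivariant diffeomorphism, since $\alpha^\sharp$ is $\bbT^m$-invariant by construction of $\alpha^\sharp_x$ from $\alpha$) carries $\Phi^{-1}(\alpha)$ to the annihilator $\pi^*T^*B\subset T^*X_0$, equivariantly. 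Quotienting both sides by $\bbT^m$ then gives $(T^*X)_\alpha=\Phi^{-1}(\alpha)/\bbT^m\cong (\pi^*T^*B)/\bbT^m\cong T^*B=T^*(X_0/\bbT^m)$, the last step because $\bbT^m$ acts on $\pi^*T^*B$ by the pullback action covering the trivial action on the $T^*B$-factor, with quotient $T^*B$. This should be spelled out carefully in the orbifold category: over an orbifold chart $U=\widetilde U/\Gamma$ for $B$ the statement is the manifold statement for $\widetilde U$ equivariant under $\Gamma$, and these glue.

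The main obstacle I expect is bookkeeping around the local (rather than free) action on $X_0$: one must check that everything is well-defined on the quotient orbifold and not just on the free locus, i.e. that the local stabilizers act compatibly on the fibers $\alpha^\sharp_x+\big((T_x\bbT^m.x)^\perp\big)^*$ so that the fiberwise identification descends, and that the resulting object is genuinely $T^*$ of the quotient \emph{orbifold}. Concretely, one checks that the finite stabilizer $G_x$ acts trivially on the normal space $(T_x\bbT^m.x)^\perp$ modulo its own orbifold structure — more precisely $G_x$ fixes $\alpha^\sharp_x$ (because $\Phi|_{T_x^*X}$ is $G_x$-invariant, our group being abelian) and acts on $(T_x\bbT^m.x)^\perp$ exactly as the orbifold isotropy of $[x]\in B$ acts on $T_{[x]}B$. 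Granting that compatibility, the chart-by-chart identifications are the standard ones and glue, so the lemma follows; the remaining verifications (smoothness of $\alpha^\sharp$, equivariance, independence of auxiliary choices) are routine.
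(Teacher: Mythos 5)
Your proposal is correct and follows essentially the same route as the paper: identify $\pi^*T^*(X_0/\bbT^m)$ with the annihilator of the orbit directions via the metric splitting, translate by the invariant section $\alpha^\sharp$ to land in $\Phi^{-1}(\alpha)$, check bijectivity, and quotient by $\bbT^m$. The extra care you take with the finite local stabilizers is a reasonable elaboration of what the paper leaves implicit, but it is not a different argument.
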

 \begin{proof}
 For any $x\in X$, the metric gives a splitting of $T_xX$ as in \eqref{eqn:splitting}.   Let $\pr:X\rightarrow X/\bbT^m$ be the usual projection. The space $T_{\pr (x)}^*(X/\bbT^m)$ can be identified with
$$
\{v^*\in T^*_xX: v^* \vert_{T_x\bbT^m.x}=0\}.
$$

Let $\alpha$ be a regular value of $\Phi$.   We can define $\alpha^\sharp$ as in \eqref{eqn:alphasharp}, and we have $(x,\alpha^\sharp_x)\in \Phi^{-1}(\alpha)$. This gives a map
$$
\begin{aligned}
\pr^*{(T^*(X_0/\bbT^m))} &\rightarrow  \Phi^{-1}(\alpha)\\
(x,v^*)& \mapsto (x,v^*+\alpha^\sharp_x)
\end{aligned}
$$
where $X_0$ is the set of points in $X$ where $\bbT^m$ acts locally freely.  
Since 
\[
\langle \Phi(x,v^*+\alpha^\sharp_x),\xi\rangle=(v^*+\alpha^\sharp_x)(V_\xi(x))=\alpha(\xi), 
\]
this map is well-defined. Given $(x,w^*)$ in $\Phi^{-1}(\alpha)$, $w^*-\alpha^\sharp_x$ vanishes on $T_x\bbT^m.x$, so the map is surjective.  It is clearly injective. Since $(\pr^*(T^*(X_0/\bbT^m)))/\bbT^m =T^*(X_0/\bbT^m)$, the result follows.
\end{proof}

We now study the symplectic structure on $T^*(X_0/\bbT^m)$ under the identification in Lemma \ref{lemma:ident}. Since $X_0/\bbT^m$ is an {orbifold}, $T^*(X_0/\bbT^m)$ has a canonical form which we denote by $\omega_0$. Given the metric $g$ on $X$ and the orthogonal splitting \eqref{eqn:splitting} at any point $x\in X_0$, we have defined $\alpha^\sharp$ by \eqref{eqn:alphasharp}.  That is, $\alpha^\sharp$ is a $1$-form on $X_0$ satisfying $V_\xi \lrcorner \alpha^\sharp=\alpha(\xi)$ for any $\xi\in {\bbR^m}$. By Cartan's magic formula,
$$
V_\xi \lrcorner d\alpha^\sharp=\mathcal{L}_{V_\xi} \alpha^\sharp-d(V_\xi \lrcorner \alpha^\sharp);
$$
we will show that both terms on the right side of this equation equal $0$.  Since $V_\xi \lrcorner \alpha^\sharp = \alpha(\xi)$ is constant, it is clear that the second term equals $0$.  For the first term, recall that $V_\xi$ is defined by \eqref{eqn:V_xi}, so that $\mathcal{L}_{V_\xi} \alpha^\sharp = \frac{d}{dt} (\action_{e^{it\xi}}^*\alpha^\sharp) \vert_{t=0}$.  We claim that $\action_{e^{it\xi}}^*\alpha^\sharp = \alpha^\sharp$, which would imply that this Lie derivative equals $0$.  At a point $x \in X_0$, we have 
\begin{eqnarray*}
(\action_{e^{it\xi}}^*\alpha^\sharp)_x (V_\xi(x) + v)&=& \alpha^\sharp_{e^{it\xi}.x} \circ d\action_{e^{it\xi}}(V_\xi(x) + v) \\
&=& \alpha^\sharp_{e^{it\xi}.x}(V_\xi(e^{it\xi}.x) + d\action_{e^{it\xi}}(v)).
\end{eqnarray*}
Since $\action_{e^{it\xi}}$ is an isometry and $v \perp V_\xi$ for all $t$, we know that $d\action_{e^{it\xi}}(v) \in (T_x\bbT^m.x)^\perp$ for all $t$.  Thus $(\action_{e^{it\xi}}^*\alpha^\sharp)_x (V_\xi(x) + v) = \alpha(\xi)$ and our claim follows.
Hence $V_\xi \lrcorner d\alpha^\sharp = 0$, which implies that $d \alpha^\sharp$ actually comes from a form on $X_0/\bbT^m$.
Let us call that form $\nu_\alpha$ and consider the pullback of $\nu_\alpha$ to $T^*(X_0/\bbT^m)$, which we denote by $\nu^\sharp_\alpha$.

\begin{lemma}
Let $\alpha$ be any regular value of $\Phi$. Under the identification $(T^*X)_\alpha\simeq T^*(X_0/\bbT^m)$ given by Lemma \ref{lemma:ident}, the reduced symplectic form on $(T^*X)_\alpha=\Phi^{-1}(\alpha)/\bbT^m$ gets mapped to $\omega_0-\nu^\sharp_\alpha$.
\end{lemma}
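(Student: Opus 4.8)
The plan is to reduce the statement to the classical cotangent‑bundle reduction computation (the Sternberg–Weinstein description of a reduced cotangent bundle with a magnetic term), carried out at the level of tautological $1$-forms. Write $\theta$ for the tautological $1$-form on $T^*X$, so that the canonical form is $\omega=-d\theta$ (this is the sign convention that makes the reduction identity $\pr^*\omega_\alpha=\iota^*\omega$ and the present statement mutually consistent, given the $+\alpha^\sharp$ appearing in Lemma \ref{lemma:ident}), and write $\theta_0$ for the tautological $1$-form on $T^*(X_0/\bbT^m)$, so $\omega_0=-d\theta_0$. Recall the explicit diffeomorphism from the proof of Lemma \ref{lemma:ident},
\[
\Psi:\ \pr^*\bigl(T^*(X_0/\bbT^m)\bigr)\longrightarrow\Phi^{-1}(\alpha),\qquad (x,v^*)\longmapsto\bigl(x,(d\pr_x)^*v^*+\alpha^\sharp_x\bigr),
\]
where a covector on $X_0/\bbT^m$ is identified with the corresponding horizontal covector on $X_0$ via $(d\pr_x)^*$. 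Since $\pr^*(T^*(X_0/\bbT^m))/\bbT^m=T^*(X_0/\bbT^m)$ and $\Phi^{-1}(\alpha)/\bbT^m=(T^*X)_\alpha$, the map $\Psi$ descends to the identification $\bar\Psi:T^*(X_0/\bbT^m)\xrightarrow{\sim}(T^*X)_\alpha$ of Lemma \ref{lemma:ident}, fitting into a commutative square with the quotient maps $q:\pr^*(T^*(X_0/\bbT^m))\to T^*(X_0/\bbT^m)$ and $\pr:\Phi^{-1}(\alpha)\to(T^*X)_\alpha$; here $q$ is itself the quotient by $\bbT^m$, so $q^*$ is injective on forms. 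Our task is thus to show $\Psi^*(\iota^*\omega)=q^*(\omega_0-\nu^\sharp_\alpha)$.

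The heart of the matter is the identity of $1$-forms
\[
\Psi^*(\iota^*\theta)=q^*\theta_0+p^*\alpha^\sharp,
\]
where $p:\pr^*(T^*(X_0/\bbT^m))\to X_0$ is the bundle projection. First note that $\Phi^{-1}(\alpha)\subset T^*X_0$ by Lemma \ref{lemma:regvalues}, so $\iota^*\theta$ is just the tautological $1$-form of $T^*X_0$ pulled back along $\Psi$. I would then factor $\Psi=\tau_{\alpha^\sharp}\circ j$, where $j:\pr^*(T^*(X_0/\bbT^m))\hookrightarrow T^*X_0$ is the fiberwise inclusion $(d\pr)^*$ and $\tau_{\alpha^\sharp}:T^*X_0\to T^*X_0$ is fiberwise translation by the $1$-form $\alpha^\sharp$. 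For the first factor, the naturality of the tautological form under the submersion $\pr:X_0\to X_0/\bbT^m$ gives $j^*\theta=q^*\theta_0$; this is a one-line verification, since evaluating either side on a tangent vector $w$ returns $v^*\bigl(d\pi_{X_0/\bbT^m}(dq(w))\bigr)$, with $\pi_{X_0/\bbT^m}$ the cotangent projection. For the second factor I use the standard formula $\tau_\beta^*\theta=\theta+\pi_{T^*X_0}^*\beta$, valid for any $1$-form $\beta$ on $X_0$. Composing (and using $\pi_{T^*X_0}\circ j=p$) yields the displayed identity.

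It remains to differentiate. Recall that just before the statement of the lemma it was shown that $d\alpha^\sharp$ is basic, $d\alpha^\sharp=\pr^*\nu_\alpha$, and that $\nu^\sharp_\alpha=\pi_{X_0/\bbT^m}^*\nu_\alpha$. Since $\pr\circ p=\pi_{X_0/\bbT^m}\circ q$, we get $p^*\,d\alpha^\sharp=p^*\pr^*\nu_\alpha=q^*\pi_{X_0/\bbT^m}^*\nu_\alpha=q^*\nu^\sharp_\alpha$, hence
\[
\Psi^*(\iota^*\omega)=-d\bigl(q^*\theta_0+p^*\alpha^\sharp\bigr)=q^*\omega_0-q^*\nu^\sharp_\alpha=q^*\bigl(\omega_0-\nu^\sharp_\alpha\bigr).
\]
On the other hand the commutative square gives $\Psi^*(\iota^*\omega)=\Psi^*\pr^*\omega_\alpha=q^*(\bar\Psi^*\omega_\alpha)$. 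Since $q^*$ is injective, $\bar\Psi^*\omega_\alpha=\omega_0-\nu^\sharp_\alpha$, which is the assertion.

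I expect the only genuine care to be orbifold bookkeeping rather than anything conceptual: $X_0/\bbT^m$ is an orbifold, so $T^*(X_0/\bbT^m)$, its tautological form, the pulled‑back bundle $\pr^*(T^*(X_0/\bbT^m))$, and the map $q$ must all be read in the orbifold sense. This is harmless here because every form in sight is a $\bbT^m$-basic form on the honest manifold $T^*X_0$ (or a pullback of a form on the manifold $X_0$), so the whole computation may be run upstairs on $T^*X_0$ and the resulting equality of $\bbT^m$-invariant forms then descends; equivalently, one works in orbifold uniformizing charts, where nothing changes. Beyond that the proof is a routine manipulation of tautological forms, and the main thing to watch is keeping the various projections $p,q,\pi_{X_0/\bbT^m},\pr,\iota$ straight and the sign conventions consistent.
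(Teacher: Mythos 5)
Your proposal is correct and follows essentially the same route as the paper: both compute the pullback of the tautological $1$-form under fiberwise translation by $\alpha^\sharp$, obtain $\lambda\mapsto\lambda+\pi^*\alpha^\sharp$, differentiate, and use that $d\alpha^\sharp$ is basic to descend to the quotient. Your version merely makes explicit the factorization through the inclusion $j$ and the commutative square of quotient maps, steps the paper leaves implicit when it says the relation ``follows directly'' from the one for $\gamma_\alpha$.
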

\begin{proof}
The identification $\pr^*(T^*(X_0/\bbT^m)) \simeq \Phi^{-1}(\alpha)$ is given by the restriction to $\pr^*(T^*(X_0/\bbT^m))$ of the map
$$
\begin{aligned}
\gamma_\alpha:T^*X_0 &\rightarrow  T^*X_0\\
(x,v^*)& \mapsto (x,v^*+\alpha^\sharp_x).
\end{aligned}
$$
The map $\gamma_\alpha$ is $\bbT^m$-equivariant 
and we will show that
$$
\gamma_\alpha^*\lambda=\lambda+\pi^*\alpha^\sharp ,
$$
where $\lambda$ is the tautological $1$-form on $T^*X_0$ and $\pi$ is the projection from $T^*X_0$ to $X_0$. We have
\[
 \gamma_\alpha^*\lambda_{(x,v^*)}=\lambda_{(x,v^*+\alpha_x^\sharp)}\circ d\gamma_\alpha=(v^*+\alpha_x^\sharp)\circ d\pi\circ d\gamma_\alpha .
 \]
Since $\pi\circ\gamma_\alpha=\pi$, the preceding equation becomes 
\[
\gamma_\alpha^*\lambda_{(x,v^*)}=v^*\circ d\pi+\alpha_x^\sharp\circ d\pi=\lambda_{(x,v^*)}+\pi^*\alpha^\sharp_{(x,v^*)}. 
\]
This implies that 
$$
\gamma_\alpha^*\omega_0=\omega_0-\pi^*d\alpha^\sharp ,
$$
where $\omega_0=-d\lambda$ is the canonical symplectic form on $T^*X_0$. Restricting $\gamma_\alpha$ to $\pr^*(T^*(X_0/\bbT^m))$ and taking the quotient by $\bbT^m$ we get a diffeomorphism
$$
g_\alpha:T^*(X_0/\bbT^m)\rightarrow (T^*X)_\alpha .
$$
Letting $\omega_\alpha$ be the symplectic form obtained by symplectic reduction on $(T^*X)_\alpha$, we have
$$
g_\alpha^*\omega_\alpha=\omega_0-\nu_\alpha^\sharp .
$$
This relation follows directly from the corresponding relation for $\gamma_\alpha$ and the definition of reduced symplectic form.
\end{proof}

We conclude this section with a discussion of symplectic reduction in stages.  We want to identify $(T^*X)_\alpha$, which was obtained via symplectic reduction of $T^*X$ with respect to $\bbT^m$ at level $\alpha \in {\bbR^m}$, with a space obtained by doing symplectic reduction twice.  Recall that $\chi_\alpha$ is a character of $\bbT^m$ associated with $\alpha$, defined by $\chi_\alpha(e^{i\theta}) = e^{i\theta \cdot \alpha}$.  Letting $K$ be the kernel of the homomorphism $\chi_\alpha: \bbT^m \rightarrow S^1$ and noting that $\chi_\alpha$ is surjective, 
we have an isomorphism $\bbT^m/K \cong S^1$.  
Thus $Y:= X_0/K$ can be viewed as an $S^1$-bundle over $X_0/\bbT^m$, denoted $\text{proj}: Y \rightarrow X_0/\bbT^m$.
  
We may perform symplectic reduction of $T^*X$ with respect to $K$ at level zero.  The Hamiltonian action of $\bbT^m$ on $T^*X$ restricts to a Hamiltonian action of $K$ on $T^*X$, and there is a corresponding restricted moment map $\bar{\Phi}: T^*X \rightarrow \mathpzc{k}^*$. 
Since $\alpha \vert_{\mathpzc{k}}$ is the zero map, we see that $(T^*X)_\alpha$ can be obtained by reducing $T^*X$ with respect to $K$ at level $0$, and then reducing the resulting space with respect to $S^1$ at moment level $1$.
We will use this construction for $(T^*X)_\alpha$, which is called \emph{reduction in stages} \cite[p. 149]{cds}, in the next section.


\section{Quantum reduction}\label{sec:quantum_reduc}

{Using the same notation as in \S \ref{sec:classred}, we will now} describe the operator $h^2\Lap_{|\Sm(X)^\frac{\alpha}{h}}$, viewed as a semi-classical differential operator on $X_0/\bbT^m$.  Note that the reduction in stages carried out in the previous section can also be performed in the quantum setting.  Since the Riemannian metric on $X$ is $\bbT^m$-invariant, it is automatically $K$-invariant; hence this metric induces a metric on $Y$ and the operator $h^2\Lap_{|\Sm(X)^\frac{\alpha}{h}}$ can be identified with the operator $h^2\Lap_{|\Sm(Y)^\frac{1}{h}}$, where $\Sm(Y)^\frac{1}{h}$ is the space of functions in $\Sm(Y)$ that satisfy the transformation law $f(e^{i\theta}y) = e^{\frac{i\theta}{h}}f(y)$. 

Before venturing into this quantum setting, we will describe the classical differential operator $\Lap_{|\Sm(Y)^1}$, where $\Sm(Y)^1$ is the space of functions in $\Sm(Y)$ that satisfy the transformation law $f(e^{i\theta}y) = e^{i\theta}f(y)$.  Since $Y$ is equipped with a Riemannian metric, we get an orthogonal splitting of $T_yY$ analogous to that in \eqref{eqn:splitting}:
\begin{equation}\label{eqn:splitting2}
T_yY = T_yS^1.y \oplus (T_yS^1.y)^\perp .
\end{equation}
The vector field $\frac{\partial}{\partial \theta}$ on $S^1$ gives a trivialization of the first summand in \eqref{eqn:splitting2}, so we may replace $T_yS^1.y$ by $\bbR$.  
Recall that $Y= X_0/K$ can be viewed as an $S^1$-bundle over $X_0/\bbT^m$, denoted $\text{proj}: Y \rightarrow X_0/\bbT^m$. {There is an identification
$$
\begin{aligned}
(T_yS^1\cdot y)^\perp&\mapsto \text{proj}^*(T_{\operatorname{proj}(y)}^*(X_0/\bbT^m))\\
v&\rightarrow d \text{proj}_y(v).
\end{aligned}
$$
 Thus \eqref{eqn:splitting2} gives rise to a dual splitting
\begin{equation}\label{eqn:dualsplit}
T_y^*Y \cong \bbR \oplus \operatorname{proj}^*(T_{\operatorname{proj}(y)}^*(X_0/\bbT^m)).
\end{equation}}

The vector $1$ in the first summand of \eqref{eqn:dualsplit} gives rise to a section $\mu$ of $T^*Y$.  Note that $\mu$ is dual to the vector field $\frac{\partial}{\partial \theta}$, so we have
\[
\frac{\partial}{\partial \theta} \lrcorner \mu = 1.
\]
Hence $\mu$ defines a connection form on the $S^1$-bundle $\text{proj}: Y \rightarrow X_0/\bbT^m$. 
 In particular, $\mu$ possesses a curvature form $\nu \in \Omega^2(X_0/\bbT^m)$ satisfying $\operatorname{proj}^*\nu = d\mu$.

We can also view the space $C^\infty(Y)^1$ in terms of objects defined on $X_0/\bbT^m$.  Fix $p \in X_0/\bbT^m$, and let $Y_p$ be the fiber of $Y$ above $p$.  Denote
\[
\mathbb{L}_p = \{ f: Y_p \rightarrow \bbC \ \vert \  f(e^{i\theta}y) = e^{i\theta}f(y)\} .
\]
The assignment $p \rightarrow \mathbb{L}_p$ defines a line bundle over $X_0/\bbT^m$. Using our definition of $C^\infty(Y)^1$, one can check that sections of this line bundle correspond to elements of $C^\infty(Y)^1$.  Hence we may identify $C^\infty(Y)^1$ with $C^\infty(\mathbb{L})$.

We claim that the connection on $Y$ gives us a connection on $\mathbb{L}$.  Suppose $W$ is a vector field on $X_0/\bbT^m$, with $\widetilde{W}$ its horizontal lift to $Y$.  The Lie differentiation operator $L_{\widetilde{W}}: C^\infty(Y) \rightarrow C^\infty (Y)$ commutes with the $S^1$-action and hence maps $C^\infty(Y)^1$ into $C^\infty(Y)^1$.  Via our identification of $C^\infty(Y)^1$ with $C^\infty(\mathbb{L})$, we see that $L_{\widetilde{W}}$ defines a covariant differentiation operator $\nabla_W: C^\infty(\mathbb{L}) \rightarrow C^\infty(\mathbb{L})$.  
To see what $\nabla_W$ looks like locally, let $(U, x_1, \dots, x_n)$ be a coordinate patch on $X_0/\bbT^m$ and $(\widetilde{U}, x_1, \dots, x_n, \theta)$ the coordinate patch sitting above $U$ in $Y$.  In these coordinates, we may write the section $\mu$ of $T^*Y$ introduced above as 
\[
\mu = \sum \mu_k(x) dx_k + d\theta \ .
\]
Let $\nu_k$ be the horizontal lift of $\frac{\partial}{\partial x_k}$; horizontality implies that $\nu_k \lrcorner \mu = 0$, and thus 
\[
\nu_k = \frac{\partial}{\partial x_k} - \mu_k(x) \frac{\partial}{\partial \theta} .
\]
Hence given a function $\tilde{f} = f(x)e^{i\theta}$ in $C^\infty(\widetilde{U})^1$, we have
\begin{equation}\label{eqn:Liederiv}
L_{\nu_k}\tilde{f} = \Bigg( \frac{\partial}{\partial x_k} - \sqrt{-1} \mu_k(x) \Bigg) f(x)e^{i\theta},
\end{equation}
since the Lie derivative of a function $f$ along a vector field $\nu$ is the evaluation $\nu(f)$.
Therefore, via the identification of $C^\infty({U})^1$ with $C^\infty(\mathbb{L} \vert_{\widetilde{U}})$ that matches $f$ with $fe^{i\theta}$, we may write
\[
\nabla_{\frac{\partial}{\partial x_k}}= \frac{\partial}{\partial x_k} - \sqrt{-1} \mu_k .
\]
This gives a local description of a connection on $\mathbb{L}$ arising from the connection on $Y$.

Using these ingredients, we describe the operator $\Lap \vert_{\Sm(Y)^1}$ as an operator on $X_0/\bbT^m$.  The $\nu_k$ will serve as coordinate vectors for the chart on $\widetilde{U}$ obtained via horizontal lifts. Then on $\widetilde{U}$ we may write
\[
\Lap = - \Bigg( \sum_{k,\ell=1}^{n} g^{k,\ell} L_{\nu_k} L_{\nu_\ell} + \sum_{k=1}^{n} g^{k} L_{\nu_k} + g^{00} \frac{\partial^2}{\partial \theta^2} \Bigg),
\]
where the $g^{ij}$'s are the entries of the inverse of the matrix of the metric in the frame $\{\nu_i\}$. 
The absence of $g^{0,k}$ terms is due to the fact that the vector fields $\nu_k$ are perpendicular to the vector field $\frac{\partial}{\partial \theta}$.  Note that $\frac{\partial}{\partial \theta}(f) = f$ for $f \in \Sm(\widetilde{U})$. Restricting to functions in $\Sm(\widetilde{U})^1$, we may write
\[
\Lap = - \Bigg( \sum_{k,\ell=1}^{n} g^{k,\ell} \Big(\frac{\partial}{\partial x_k} - \sqrt{-1} \mu_k\Big)\Big(\frac{\partial}{\partial x_\ell} - \sqrt{-1} \mu_\ell\Big) + \sum_{k=1}^n g^{k} \Big(\frac{\partial}{\partial x_k} - \sqrt{-1} \mu_k\Big) \Bigg) + V ,
\]
where $V$ is the potential function $g_{00}(y) = \langle \frac{\partial}{\partial \theta}, \frac{\partial}{\partial \theta} \rangle_p$.  Thus the operator on $X_0/\bbT^m$ corresponding to $\Lap \vert_{\Sm(Y)^1}$ is the Schr\"{o}dinger operator $\Lap_\mathbb{L} + V$, where
\begin{equation}\label{eqn:Schro}
\Lap_\mathbb{L} = - \Bigg( \sum_{k,\ell=1}^{n} g^{k,\ell} \nabla_{\frac{\partial}{\partial x_k}} \nabla_{\frac{\partial}{\partial x_\ell}} + \sum_{k=1}^{n} g^{k} \nabla_{\frac{\partial}{\partial x_k}} \Bigg) .
\end{equation}

Next we venture into the semi-classical setting and describe the semi-classical operator $h^2\Lap$ on the space $\Sm(Y)^{\frac{1}{h}}$.  Recall that $h^{-1}:= N$ is an integer, and that $\Sm(Y)^{\frac{1}{h}}$ is the space of functions in $\Sm(Y)$ satisfying the transformation law $f(e^{i\theta} y) = e^{\frac{i\theta}{h}}f(y)$.  We want to identify elements of $\Sm(Y)^{\frac{1}{h}}$ with sections of a certain line bundle.   As above, we fix $p \in X_0/\bbT^m$, and let $Y_p$ be the fiber of $Y$ above $p$.  Denote
\[
\mathbb{L}_p^N = \{ f: Y_p \rightarrow \bbC \ \vert \  f(e^{i\theta}y) = e^{iN\theta}f(y)\}.
\]
The assignment $p \rightarrow \mathbb{L}_p^N$ defines a line bundle over $X_0/\bbT^m$.  Arguing as for $h=1$, one can check that sections of this line bundle correspond to elements of $C^\infty(Y)^{\frac{1}{h}}$. 
 Hence we may identify $C^\infty(Y)^{\frac{1}{h}}$ with $C^\infty(\mathbb{L}^N)$.

The other main difference from the classical setting stems from the expression for the Lie differentiation operator.  As above, we let $(U, x_1, \dots, x_n)$ be a coordinate patch on $X_0/\bbT^m$ and $(\widetilde{U}, x_1, \dots, x_n, \theta)$ the coordinate patch sitting above $U$ in $Y$.  Then elements of $\Sm(Y)^{\frac{1}{h}}$ are functions of the form $\tilde{f} = f(x)e^{\frac{i\theta}{h}}$, and \eqref{eqn:Liederiv} is replaced by
\[
L_{\nu_k}\tilde{f} = \Bigg( \frac{\partial}{\partial x_k} - \frac{\sqrt{-1}}{h}\mu_k(x) \Bigg) f(x)e^{\frac{i\theta}{h}} .
\]

Now let $(\xi_1, \ldots, \xi_n, \eta)$ be the dual cotangent coordinates to $(x_1, \dots, x_n, \theta)$.  Then the semi-classical symbol of the semi-classical differential operator $\frac{h}{\sqrt{-1}}L_{\nu_k} \vert_{\Sm(Y)^{\frac{1}{h}}}$ is $\xi_k - \mu_k(x)$.
This symbol has a nice interpretation in terms of the symbol of the operator $\frac{h}{\sqrt{-1}}L_{\nu_k} \vert_{\Sm(Y)}$.  Recalling that $\nu_k = \frac{\partial}{\partial x_k} - \mu_k(x) \frac{\partial}{\partial \theta}$, we see that the symbol of $\frac{h}{\sqrt{-1}}L_{\nu_k}$, viewed as a semi-classical operator, is $\xi_k - \mu_k(x)\eta$.
However, the moment map $\phi: T^*Y \rightarrow \bbR$ associated with the $S^1$-action on $T^*Y$ is given in these coordinates by $\phi(x, \theta, \xi, \eta) = \eta$, so the semi-classical symbol of $\frac{h}{\sqrt{-1}}L_{\nu_k} \vert_{\Sm(Y)^{\frac{1}{h}}}$ is the symbol of $\frac{h}{\sqrt{-1}}L_{\nu_k}$ restricted to $\phi^{-1}(1)$.  The same reasoning shows that the semi-classical symbol of the Laplace operator $h^2 \Lap \vert_{\Sm(Y)^{\frac{1}{h}}}$ is the restriction to $\phi^{-1}(1)$ of the symbol of $h^2\Lap \vert_{\Sm(Y)}$. 

There is another, slightly more intrinsic, way to view the symbol of $h^2 \Lap \vert_{\Sm(Y)^{\frac{1}{h}}}$.  Let ${\fp_0}$ be the {leading} semi-classical symbol of the operator $h^2 \Lap \vert_{\Sm(Y)}$.  Since ${\fp_0} \vert_{\phi^{-1}(1)}$ is $S^1$-invariant, it can be viewed as the pullback to $\phi^{-1}(1)$ of a function {$\fp_1$} on the reduced space $(T^*Y)_1$.  Thus we can think of the symbol of $h^2 \Lap \vert_{\Sm(Y)^{\frac{1}{h}}}$ as being {$\fp_1$}.  By reduction in stages {(e.g., \cite{sjaler}), this $\fp_1$ is the same as $\fp_\alpha$ with $\alpha=1$} discussed in the introduction.

Finally, we note that the identification of $\Sm(Y)^{\frac{1}{h}}$ with $\Sm(\mathbb{L}^N)$ allows us to convert the operator $h^2 \Lap \vert_{\Sm(Y)^{\frac{1}{h}}}$ into an operator on $\Sm(\mathbb{L}^N)$.  When $h=1$, we showed above that the corresponding operator on $\Sm(\mathbb{L})$ is the operator $\Lap_{\mathbb{L}} + V$, where $\Lap_{\mathbb{L}}$ is as in \eqref{eqn:Schro}.  The same argument shows that for arbitrary $h = \frac{1}{N}$, the corresponding operator on $\Sm(\mathbb{L}^N)$ is the semi-classical Schr\"{o}dinger operator $h^2 \Lap_{\mathbb{L}} + V$.


\section{Counting equivariant eigenfunctions}\label{sec:counting}

Our goal in this section is to give some intuition and motivation for the constructions and proofs in \S \ref{sec:semi-classical}.  {We will be somewhat informal in our presentation.}  We begin by describing some basic notions in semi-classical analysis.

Semi-classical analysis is concerned with families of differential operators. For $h \in \RR_+$ and {$X^n$ a Riemannian}  manifold, let $\op_h :
\Sm(X) \to \Sm(X)$ be a self-adjoint $p${th} order
differential operator depending smoothly on $h$.  In
semi-classical analysis $\op_h$ is an operator of ``order zero''
if, locally on coordinate patches,
\begin{equation*}
  \op_h = \sum_{|\alpha | \leq p} a_\alpha (x,h) h^{|\alpha|}
      D^\alpha ,
\end{equation*}
where $D^\alpha = D^{\alpha_1}_1 \cdots D^{\alpha_d}_d$, $D_i = -
\sqrt{-1} \frac{\partial}{\partial x_i}$, $|\alpha| = \alpha_1 + \dots + \alpha_d$, and $a_\alpha$ is a
$\Sm $ function of $x$ and $h$.  The symbol of this operator is  
\begin{equation*}
\fp (x,\xi,h) = \sum_{|\alpha | \leq p} a_{\alpha} (x,h) \xi^{|\alpha|}
\end{equation*}
{and its leading symbol ${\fp_0} : 
 T^* X \to \RR$ is the function}
 \begin{equation*}
   \fp_0 (x,\xi) =\sum_{|\alpha| \leq p} a_\alpha (x,0)
      \xi^{{|\alpha|}} ,
 \end{equation*}
that is, $\fp_0 (x,\xi) = p(x,\xi,0)$.  We assume that $\fp_0$ satisfies the condition $|\fp_0 (x, \xi)| \geq c_k || \xi ||^k + o(|| \xi ||^k)$ for some $0 \leq k \leq p$ and positive constant $c_k$.  See \cite[Chap. 10]{gs} for more details.

Our discussions will focus on the following examples:
\begin{enumerate}
\item 
the semi-classical Laplace operator $\op_h= {h^2 \Delta_X}$ acting on smooth functions on a manifold $X$, with symbol $\fp(x,\xi{,h}) = {||\xi||}^2_x$ and with leading semi-classical symbol $\fp_0 (x,\xi) = {||\xi||}^2_x$; 
\item the Schr\"odinger operator $\op_h =-h^2 \sum \frac{\partial^2}{\partial x^2_i} +V (x)$ acting on smooth functions on $\bbR^n$ with potential V, with symbol $\xi^2 + V(x)$. {Note that, in order for $\fp_0$ to satisfy the condition above, we must have $V(x) \rightarrow \infty$ as $|x| \rightarrow \infty$.}
\end{enumerate}

By the Schwartz kernel theorem (e.g., \cite[p. 10]{treves}), the operator $\op_h$ has a kernel $K_h(x,y)$ given by 
$$
\op_h f(y)=\int_XK_h (x,y)f(x)dx.
$$
Let $\lambda_{i,h}, i=1,2,\dots$, be the discrete set of eigenvalues of $\op_h$ with corresponding eigenbasis $\{f_{i,h}\}$.
Since $\op_h$ is self-adjoint, we can choose our eigenbasis to be orthonormal.  In fact, $\{f_{i,h}\}$ is an orthonormal basis for $\mathcal{L}^2(X)$, allowing us to formally express our kernel as
$$
K_h(x,y)=\sum_i \lambda_{i,h} f_{i,h}(x)\bar{f}_{i,h}(y).
$$
If $\op_h$ had a well-defined trace, that is, if the series $\sum_i \lambda_{i,h}$ were convergent, we would have
$$
\tr(\op_h)=\sum_i \lambda_{i,h}=\int_X K_h(x,x) {dx}.
$$
However, the series $\sum_i \lambda_{i,h}$ generally does not converge.  

One way of circumventing this difficulty is to localize $\op_h$ using a compactly supported function $\rho \in C_0^{\infty}(\bbR)$. Define the operator $\rho(\op_h)$ by setting
$$
\rho(\op_h) f_{i,h}={\rho(\lambda_{i,h})} f_{i,h},\, i=1, 2, \dots .
$$
Although $\rho(\op_h)$ is not necessarily a semiclassical differential operator, it is a zeroth order semiclassical pseudodifferential operator \cite[Chap. 8]{DiSjo}. \footnote{
Note that Dimassi and Sj\"{o}strand are working in $\bbR^n$ and not on a general manifold; to pass from $\bbR^n$ to an arbitrary manifold $X$, we use coordinate charts: we require that the kernel of $\rho(\op_h)$ be smooth off the diagonal in $X \times X$, and that $\rho(\op_h)$ have the correct form on coordinate neighborhoods.  Namely, let $U\subset X$ be a coordinate neighborhood with coordinate chart $\beta: U \rightarrow O$, where $O$ is an open subset of $\mathbb{R}^n$.  We require that the map of $C_0^{\infty}(O)$ into $C^{\infty}(O)$ defined by $u \mapsto \rho(\op_h)(u \circ \beta) \circ \beta^{-1}$ be a semiclassical pseudodifferential operator.}  In particular, $\rho(\op_h)$ is pseudolocal, which means that if $\varphi, \psi \in C^{\infty}_0 (X)$ have non-overlapping support then $\langle \rho(\op_h) \varphi ,\psi \rangle$ vanishes to infinite order in $h$. 

If we make an additional assumption, we can conclude that $\rho(\op_h)$ is trace class. 
In particular, suppose that for some interval $[a,b] \subset \RR$ and some
$\epsilon >0$,\break $\fp_0^{-1}$ $ ([a-\epsilon , b+\epsilon ])$ is
compact.  Then by the semi-classical version of the classical Friedrichs' theorem {(see \cite[\S 13.6]{gs}),} we have
\begin{equation*}
  \spec (\op_h)\cap [a,b]  = \{ \lambda_{i,h}\, , \, 1\leq i \leq N(h)\},
\end{equation*}
where
\begin{equation*}
 N (h) \sim (2\pi h)^{-n} \vol ({\fp_0}^{-1} ([a,b])) \, .
\end{equation*}
For $\rho \in C^{\infty}_0 (a,b)$, the trace of $\rho(\op_h)$ is well defined and is given by
$$   \tr (\rho (\op_h)) = \sum^{N(h)}_{i=1} \rho
      (\lambda_{i,h}).$$
The operator $\rho(\op_h)$ admits a Schwartz kernel $K_{\rho,h}(x,y)$ given by
$$
K_{\rho,h}(x,y)=\sum_{i=1}^{N(h)} \rho(\lambda_{i,h})f_{i,h}(x){\bar{f}_{i,h}}(y),
$$
so we may also express the trace as
$$
\tr(\rho(\op_h))=\int_X K_{\rho,h}(x,x)dx.
$$

Now assume we have a diffeomorphism $F$ of $X$.  We define 
$$
\tr (F_*\op_h):=\int_X{K_h(F(x),x)}dx ,
$$
when such an integral converges. Let $F^\sharp:\mathcal{L}^2(X) \rightarrow \mathcal{L}^2(X)$ be the operator defined by composing with $F$, i.e., $F^\sharp(f)=f\circ F$, and suppose $F^\sharp$ commutes with $\op_h$ so that $F^\sharp$ preserves the eigenspaces of $\op_h$.  Then 
\begin{equation}\label{eqn:traceFsharp}
\tr(F_*\rho(\op_h))=\int_X K_{\rho,h}(F(x),x)dx=\int_X\sum_{i=1}^{N(h)} \rho(\lambda_{i,h}) f_{i,h}(x)F^\sharp \bar{f}_{i,h}(x)dx.
\end{equation}

To write this trace in a slightly different form, we introduce some notation.  Let $E_{\lambda}$ denote the eigenspace corresponding to the eigenvalue $\lambda \in \operatorname{Spec}(\op_h)$, and let $n_\lambda=\dim(E_\lambda)$. Also let $F^\sharp_\lambda$ denote the restriction of the operator $F^\sharp$ to $E_\lambda$. Then \eqref{eqn:traceFsharp} can be written as 
$$
\tr(F_*\rho(\op_h))=\sum_{i=1}^{N(h)} \rho(\lambda_{i,h})\sum_{j=1}^{n_{\lambda_{i,h}}} \int_X f_{j,h}(x)F^\sharp_{\lambda_{i,h}} \bar{f}_{j,h}(x)dx,
$$
or equivalently,
$$
\tr(F_*\rho(\op_h))=\sum_{i=1}^{N(h)} \rho(\lambda_{i,h})\sum_{j=1}^{n_{\lambda_{i,h}}} \langle f_{j,h}(x),F^\sharp_{\lambda_{i,h}} f_{j,h}(x)\rangle=
\sum_{i=1}^{N(h)} \rho(\lambda_{i,h}) \tr(F^\sharp_{\lambda_{i,h}}).
$$
Note that $\tr(F^\sharp_\lambda)$ does not count the $F$-invariant eigenfunctions, since $F^\sharp_\lambda$ may have eigenvalues different from $1$. 

In particular, we are interested in the situation of a group $G$ acting on $X$.  Let $\action:G\rightarrow \text{Diffeo}(X)$ be the group homomorphism, and assume that the $G$ action commutes with $\op_h$.  We have 
\[
\tr(\action(g)_*\rho(\op_h))=\sum_{i=1}^{N(h)} \rho(\lambda_{i,h}) \tr(\action(g)^\sharp_{\lambda_{i,h}}),
\]
for $g\in G$ and we can integrate this with respect to the Haar measure on $G$ to obtain
$$
\int_G \tr(\action(g)_*\rho(\op_h))dg=\sum_{i=1}^{N(h)} \rho(\lambda_{i,h}) \int_G\tr(\action(g)^\sharp_{\lambda_{i,h}})dg.
$$
By complexifying, it follows from the Peter-Weyl theorem that we can write the representation of $G$ on $E_\lambda$ as a direct sum of characters (representations of $G$ on $\bbC$), i.e., 
$$
\action^\sharp_\lambda=\bigoplus_a(\chi^a_\lambda\oplus {\bar{\chi}^a_\lambda})\bigoplus n_\lambda\bbC
$$
where $\bbC$ is the trivial representation and $n_\lambda$ is the dimension of the space of $G$-invariant functions in $E_\lambda$. Therefore 
$$
\int_G \tr(\action(g)_*\rho(\op_h))dg=\sum_{i=1}^{N(h)} \rho(\lambda_{i,h})n_{\lambda_{i,h}}+\sum_a \rho({\lambda_{i,h})} \int_G (\chi^a_{\lambda_{i,h}}+ \bar{\chi}^a_{\lambda_{i,h}})dg.
$$
If $G$ is a {torus} $\bbT^m$, the characters $\chi^a_\lambda$ are given by integral weights $\alpha_a \in \bbZ^m$, i.e., 
$
\chi^a_\lambda(e^{i\theta})=e^{i\alpha_a\cdot \theta}.
$
Thus
\begin{equation}\label{eqn:charint0}
\int_G \chi^a_\lambda dg=\int_{\bbT^m}e^{i\alpha_a\cdot \theta} d\theta=0.
\end{equation}
Hence we conclude that 
$$
\int_{\bbT^m} \tr(\action(g)_*\rho(\op_h))dg=\sum_{i=1}^{N(h)} \rho(\lambda_{i,h})n_{\lambda_{i,h}},
$$
so that $\int_{\bbT^m} \tr(\action(g)_*\rho(\op_h))dg$ actually ``counts" $\bbT^m$-invariant eigenfunctions. 

Next we want to ``count" equivariant eigenfunctions, which are complex-valued eigenfunctions that satisfy
$$
f\circ \action(g)=\chi_0(g)f
$$
for some fixed character $\chi_0$. Thus we should consider
$$
\int_G \tr(\chi_0^{-1}\action(g)_*\rho(\op_h))dg=\sum_{i=1}^{N(h)} \rho(\lambda_{i,h})n^{\chi_0}_{\lambda_{i,h}}+\sum_a {\rho(\lambda_{i,h})}\int_G (\chi^a_{\lambda_{i,h}}+ \bar{\chi}^a_{\lambda_{i,h}})dg ,
$$
where $n^{\chi_0}_\lambda$ is the dimension of the space of $\chi_0$-equivariant eigenfunctions in $E_\lambda$. If $G=\bbT^m$, then {$\chi_0$ is given by some weight $\alpha \in \bbZ^m$.  So \eqref{eqn:charint0} holds and} 
$$
\int_{\bbT^m} \tr(e^{-i\alpha\cdot \theta}\action(\theta)_*\rho(\op_h))d\theta=\sum_{i=1}^{N(h)} \rho(\lambda_{i,h})n^{\alpha}_{\lambda_{i,h}},
$$
where $n^{\alpha}_{\lambda}$ is the dimension of the space of $\alpha$-equivariant eigenfunctions in $E_\lambda$ and $\action(\theta)$ stands for $\action(e^{i\theta})$. Recall that by \eqref{eqn:traceFsharp}, we may also write
\begin{equation}\label{eqn:traceaskernel}
\tr(e^{-i\alpha\cdot \theta}\action(\theta)_*\rho(\op_h))=\int_{X}e^{-i\alpha\cdot \theta}K_{\rho,h}(\action(\theta)x,x)dx. 
\end{equation}

In the next section, we will use the representation of the trace as an integral kernel given by \eqref{eqn:traceaskernel} to count equivariant eigenfunctions.  In particular, we will apply the method of stationary phase to get an asymptotic expression for our integral as $h$ tends to $0$.  

 The main point we want to stress here is that,  in order for the weight to play a non-trivial role in the asymptotic expansion of \eqref{eqn:traceaskernel}, it should also depend on $h$. This is in fact a big difference between our work and previous work concerning equivariant eigenspaces in, say, \cite{bh2}. This justifies the introduction of the notion of \emph{semi-classical weights}. We will replace $\alpha$ by $\frac{\alpha}{h}$, where we assume that $\frac{1}{h} \in \bbZ$.  Hence the {integrated} trace we will be considering is 
$${\int_{\bbT^m}}\tr(e^{-\frac{i\alpha\cdot \theta}{h}}\action(\theta)_*\rho(\op_h)) {d \theta};$$ 
as in the case of weight $\alpha$ discussed above, this ``counts" equivariant eigenfunctions of weight $\frac{\alpha}{h}$.


\section{{Asymptotic expansion of the spectral measure}}\label{sec:semi-classical}

Let $X$ be a compact $n$-dimensional manifold which admits an effective action $\action$ of a torus $\bbT^m$.  We begin by establishing some notation.
\begin{itemize}
\item Let $X_0$ be the open subset of $X$ on which this action is locally free.
\item {Let $e^{i\theta_1}, \dots, e^{i\theta_N}$ be the elements of $\bbT^m \setminus \{e\}$ in the stabilizer groups of points in $X_0$.  Note that since the action is locally free on $X_0$}, all stabilizers are discrete.
\item For each $e^{i\theta_r}$, the connected components of its fixed point set will be labelled $X_{r,s}$, where $s = 1, \dots, N_r$.  
\item Let $\op_h: \Sm(X) \rightarrow \Sm(X)$ be a $\bbT^m$-invariant family of self-adjoint $p$th order differential operators depending smoothly on $h$; viewed as semiclassical operators, $\op_h$ are of order zero. Assume that the leading symbol $\fp_0: T^*X \rightarrow \bbR$ of $\op_h$ satisfies the condition $|\fp_0(x,\xi)| \geq c_k ||\xi||^k + o(||\xi||^k)$ for some $0 \leq k \leq p$ and positive constant $c_k$.
\item Let $\frac{\alpha}{h}$ be a weight of the $\bbT^m$ action, where $\alpha$ is a regular value of the moment map $\Phi:T^*X\rightarrow \bbR^m$ and $\frac{1}{h} \in \bbZ$. {This weight defines a character $\chi_{\frac{\alpha}{h}}$ of $\bbT^m$.}
\item Consider the restriction of $\op_h$ to the space of $\frac{\alpha}{h}$-equivariant functions $\Sm(X)^{\frac{\alpha}{h}}$, and denote the corresponding eigenvalues, counting multiplicities, by $\lambda_{i,h}(\alpha)$ for $i=1, 2, \dots$. 
\end{itemize}
After localization using a function $\rho \in C_0^{\infty}(a,b)$, we can count the equivariant eigenfunctions of weight $\frac{\alpha}{h}$ via the spectral measure
\[
\mu_{\frac{\alpha}{h}}(\rho) = \sum_{i=1}^{N(h)} \rho\left(\lambda_{i,h}\left( \alpha \right) \right), 
\]
where the eigenvalues are repeated according to their multiplicities.
As we saw in \S \ref{sec:counting}, the spectral measure can also be expressed as
\begin{equation}\label{eqn:altspecmeas}
\mu_{\frac{\alpha}{h}}(\rho) = \int_{\bbT^m} \tr(e^{-\frac{i\alpha\cdot \theta}{h}}\action(\theta)_*\rho(\op_h)) d \theta = \int_{\bbT^m}\int_X e^{-\frac{i \alpha \cdot \theta}{h}} K_{\rho,h}(\action(\theta) x,x) dx d\theta .
\end{equation}

Our goal is to get an asymptotic expansion in $h$ for this measure.  Let $(T^*X)_{\alpha}$ and $(T^*X_{r,s})_{\alpha}$ be the symplectic reductions of $T^*X$ and $T^*X_{r,s}$ at $\alpha$, and let $\nu$ and $\nu_{r,s}$ be the symplectic volume forms on these spaces as defined in \S \ref{sec:classred}.  From the symbol $\fp: T^*X \rightarrow \bbR$ of $\op_h$, one gets maps
\[
\fp_{\alpha}: (T^*X)_{\alpha} \rightarrow \bbR \text{\ \ \  and \ \ \   } \fp_{\alpha}^{r,s}: (T^*X_{r,s})_{\alpha} \rightarrow \bbR.
\]
These maps will give rise to measures on $\bbR$ defined as the {pushforwards of the measures induced by the volume forms} $\nu$ and $\nu_{r,s}$ on $(T^*X)_{\alpha}$ and  $(T^*X_{r,s})_{\alpha}$, respectively.

\begin{thm}\label{mainthm}
With the setup and notation given above, 
\begin{align}
\mu_{\frac{\alpha}{h}}(\rho) &\sim& (2\pi h)^{{m-n}} \sum_{i=0}^{\infty} h^i \sum_{\ell \leq 2i} \mu_{i,\ell}{\left( \frac{d^{\ell}\rho}{dt^{\ell}} \right)} \hspace*{5cm} \notag \\ 
&+& \sum_{r,s} (2 \pi h)^{{-k}} \chi_{\frac{\alpha}{h}}(e^{-i\theta_r}){|\operatorname{det}(A_r-I)|}^{-1} \left( (\fp_{\alpha}^{r,s})_* \nu_{r,s} +O(h^{-{k}+1})\right), 
\end{align}
{where $k \geq m$ depends on $r$ and $s$, $A_r$ is the linear action of $\tau(\theta_r)$ restricted to the normal space to $X_{r,s}$,} and the measures $\mu_{i,\ell}$ are defined by
\[
\mu_{i,\ell} = (\fp_\alpha)_* b_{i,\ell} \nu
\]
where the $b_{i,\ell}$'s  are smooth functions on the corresponding reduced spaces.  Moreover, the leading term in this expansion is given by $b_{0,0} = 1$. 
\end{thm}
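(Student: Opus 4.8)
The strategy, developing the approach of \cite{dgs3} and following the method of \cite{gs1}, is to rewrite the spectral measure through its integral-kernel representation \eqref{eqn:altspecmeas} as a semi-classical oscillatory integral and to evaluate it with the Lemma of Stationary Phase; the reduction in stages of \S\ref{sec:classred} can be used to organize the computation around the $S^1$-reduction discussed there, but is not strictly necessary. First I would replace $K_{\rho,h}$ by its standard local description as a semi-classical pseudodifferential kernel: on a coordinate patch,
\[
K_{\rho,h}(x,y)=(2\pi h)^{-n}\int e^{i\langle x-y,\xi\rangle/h}\,a(x,y,\xi,h)\,d\xi ,
\]
where $a\sim\sum_{j\ge 0}h^j a_j$, $a_0=\rho\circ\fp_0$, and each $a_j$ is a universal expression in finitely many derivatives of $\rho$ evaluated at $\fp_0$ and in the jets of the symbol of $\op_h$ (see \cite[Chap. 8]{DiSjo}), while off the diagonal $K_{\rho,h}$ is $O(h^\infty)$ by pseudolocality. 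Substituting $y=x$ and $x\mapsto\action(\theta)x$, inserting this into \eqref{eqn:altspecmeas}, and using a partition of unity on $X$ subordinate to a tubular neighborhood of the diagonal together with a ``displacement'' function agreeing with $\action(\theta)x-x$ to first order in normal coordinates, one obtains an oscillatory integral over $X\times\bbT^m\times\bbR^n$ with phase (normalized to the conventions of \eqref{eqn:momap}) of the form $\psi(x,\theta,\xi)=\langle\action(\theta)x-x,\xi\rangle-\alpha\cdot\theta$ and amplitude $(2\pi h)^{-n}a$.

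I would then identify the critical set of $\psi$. The equation $\partial_\xi\psi=0$ forces $\action(\theta)x=x$; granted this, $\partial_\theta\psi=0$ becomes $\Phi(x,\xi)=\alpha$ by the moment-map formula \eqref{eqn:momap}; and $\partial_x\psi=0$ forces $\xi$ to annihilate the image of $d\action(\theta)_x-I$. Since $\alpha$ is a regular value, Lemma \ref{lemma:regvalues} places $(x,\xi)$ in $T^*X_0$, so $\action(\theta)x=x$ can occur only for $\theta=0$ or when $e^{i\theta}$ is one of the finitely many stabilizer elements $e^{i\theta_1},\dots,e^{i\theta_N}$ of points of $X_0$. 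A partition of unity on $\bbT^m$ concentrated near these points decomposes $\mu_{\frac{\alpha}{h}}(\rho)$, and away from them the phase is non-stationary and contributes $O(h^\infty)$ by repeated integration by parts.

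Near $\theta=0$ the phase is, to leading order, linear in $\theta$ and vanishes exactly on $\Phi^{-1}(\alpha)$, so the $\theta$-integration, together with the $(x,\xi)$-directions transverse to $\Phi^{-1}(\alpha)$, is a stationary phase that is nondegenerate precisely because $\alpha$ is regular, localizing the integral to $\Phi^{-1}(\alpha)$ and contributing a factor $(2\pi h)^m$. As $\op_h$ is $\bbT^m$-invariant the integrand descends along $\pr:\Phi^{-1}(\alpha)\to(T^*X)_\alpha$, and under the identification of Lemma \ref{lemma:ident} the leading amplitude $\rho\circ\fp_0$ becomes $\rho\circ\fp_\alpha$; assembling the prefactor $(2\pi h)^{-n}$, the gain $(2\pi h)^m$, and the joint $h$-expansions of $a$ and of stationary phase produces the first line, with $\mu_{i,\ell}=(\fp_\alpha)_*b_{i,\ell}\nu$ and $b_{0,0}\equiv 1$ (hence leading term $(\fp_\alpha)_*\nu$), the bound $\ell\le 2i$ arising because $a_j$ already carries $\rho$-derivatives and the stationary-phase operator adds at most two more per order. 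These pushforwards are genuine finite measures since $\fp_\alpha$ is proper on $(T^*X)_\alpha\cong T^*(X_0/\bbT^m)$, the potential $\|\alpha\|_x^2$ blowing up at $X\setminus X_0$ as explained in the introduction. The contributions near $\theta=\theta_r$ are treated in the same spirit with a \emph{mixed} stationary phase: writing $\theta=\theta_r+\theta'$ and $\action(\theta)=\action(\theta_r)\action(\theta')$ pulls out $e^{-i\alpha\cdot\theta_r/h}=\chi_{\frac{\alpha}{h}}(e^{-i\theta_r})$; near a fixed component $X_{r,s}$, the splitting $T_xX=T_xX_{r,s}\oplus\mathcal N_x$ makes the Hessian of $\psi$ in the $\mathcal N$-directions, paired against the corresponding cotangent directions, an invertible operator built from $A_r-I$, whose Gaussian integral yields $|\det(A_r-I)|^{-1}$ exactly as in the Atiyah--Bott--Lefschetz fixed point computation, while the $\theta'$-integration again localizes to $\Phi=\alpha$ and $\bbT^m$-invariance descends the remainder to $(T^*X_{r,s})_\alpha$ with volume $\nu_{r,s}$ and reduced symbol $\fp_\alpha^{r,s}$; tracking the powers of $h$ through the codimension of $X_{r,s}$ and the rank of the reduced moment map gives the prefactor $(2\pi h)^{-k}$ with $k\ge m$, leading term $\chi_{\frac{\alpha}{h}}(e^{-i\theta_r})|\det(A_r-I)|^{-1}(\fp_\alpha^{r,s})_*\nu_{r,s}$, and correction $O(h^{-k+1})$.

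The main obstacle will be the analysis near the $\theta_r$, where the stationary phase is genuinely mixed: the $\theta'$- and moment-map directions degenerate into a delta-type localization (as in the $\theta=0$ case) while the transverse normal directions are authentically oscillatory and produce $|\det(A_r-I)|^{-1}$, and one must check that these two regimes are compatible on their overlap, pin down the exact exponent $k$, and endow $\action(\theta)x-x$ with a well-defined global meaning (via a connection or normal coordinates) so that $\psi$ is determined up to negligible higher-order terms. A secondary point, needed just for the statement to be meaningful, is that every pushforward measure occurring---the $\mu_{i,\ell}$ and the $(\fp_\alpha^{r,s})_*\nu_{r,s}$---is well-defined and finite, which again rests on the properness noted above.
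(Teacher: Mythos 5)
Your proposal is correct in outline and follows the same strategy as the paper: the kernel representation \eqref{eqn:altspecmeas}, the Schwartz kernel expansion of Lemma \ref{usual_schwartz_kernel}, partitions of unity on $X$ and on $\bbT^m$ localizing near $\theta=0$ and the $\theta_r$, and stationary phase producing the factor $(2\pi h)^m$ at free points and $|\det(A_r-I)|^{-1}$ at the exceptional group elements. The one substantive difference lies in how the stationary phase is set up. You work with the invariant phase $\langle\action(\theta)x-x,\xi\rangle-\alpha\cdot\theta$ and correctly flag as the main obstacles the need for a globally meaningful displacement $\action(\theta)x-x$, the reconciliation of the ``delta-type'' and genuinely oscillatory regimes near $\theta_r$, and the determination of the exponent $k$. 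The paper dissolves all three at once with the Slice Theorem: on a tubular neighborhood of an orbit one chooses coordinates $(u,v)$ in which the action is \emph{exactly} $\action(\theta)(u,v)=(u,v+\theta)$ at free points and $\action(\theta+\theta_r)(u,v)=(A_r u,v+\theta)$ near $X_{r,s}$. After shifting $\hat{v}\mapsto\hat{v}-\alpha$ and $\theta\mapsto\theta-\theta_r$ (which pulls out $\chi_{\frac{\alpha}{h}}(e^{-i\theta_r})$), the phase is exactly quadratic in the variables $(u_{k+1},\dots,u_{n-m},\hat{u}_{k+1},\dots,\hat{u}_{n-m},\hat{v},\theta)$, so a single application of Lemma \ref{stationary_phase} with the block matrix built from $A_r-I$ and the off-diagonal identity blocks handles both ``regimes'' simultaneously; there is no overlap to reconcile, $k$ is read off as the number of normal slice coordinates fixed by $A_r$ (so that $(A_{r,i,j}-\delta_{ij})_{i,j>k}$ is invertible), and no global displacement function is needed since everything is patchwise in slice coordinates. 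With that device supplied, your argument matches the paper's, including the mechanism for $\ell\le 2i$, the identification of the leading amplitude with $\rho\circ\fp_\alpha$, and the properness of $\fp_\alpha$ guaranteeing that the pushforward measures are well defined.
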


Note that in the course of proving this theorem, we give an explicit expression for the lower-order terms attached to the {summation over $r$ and $s$}.
To prove the theorem, we use two analytical tools. The first tool is an explicit expression for the Schwartz kernel of $\rho (\op_h)$ in local coordinates. 

 \begin{lemma}[Schwartz kernel asymptotic expansion]\label{usual_schwartz_kernel}
{With the setup and notation given above}, let $K_{\rho, h}(x,y)$ be the Schwartz kernel for $\rho(\op_h)$ in some local coordinates. Then $K_{\rho,h}(x,y)$ admits an asymptotic expansion in powers of $h$:
\begin{equation}
  \label{eq:2.8}
  (2\pi h)^{-n} \sum^\infty_{k=0}  h^k \int a_{\rho,k}
    (y,\xi) e^{\frac{i (x-y)\cdot \xi}{h}} \, d \xi ,
\end{equation}
where
\begin{equation}
  \label{eq:2.9}
  a_{\rho,k} (y,\xi) = \sum_{\ell \leq 2k} b_{k,\ell} (y, \xi)
    \left( \frac{d^\ell \rho }{ds^\ell} \right)({\fp_0} (y,\xi))
\end{equation}
and the leading order term in (\ref{eq:2.8}) is given by
$a_{\rho ,0} (y,\xi)= \rho (\fp_0 (y,\xi))$. 
\end{lemma}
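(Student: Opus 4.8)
The plan is to deduce \eqref{eq:2.8}--\eqref{eq:2.9} from the Helffer--Sj\"ostrand functional calculus together with the semiclassical symbol calculus for the resolvent, following \cite[Chap.~8]{DiSjo} and \cite[\S13.6]{gs}; the hypothesis $|\fp_0(x,\xi)|\geq c_k\|\xi\|^k+o(\|\xi\|^k)$ is what makes the resolvent construction elliptic at infinity and hence globally meaningful on the coordinate patch. First I would fix an almost analytic extension $\tilde\rho\in C_0^\infty(\bbC)$ of $\rho$, so that $\bar\partial\tilde\rho$ has compact support and vanishes to infinite order on the real axis, and recall the representation
\[
\rho(\op_h)=\frac{1}{2\pi i}\int_{\bbC}\bar\partial\tilde\rho(z)\,(z-\op_h)^{-1}\,dz\wedge d\bar z .
\]

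Working in the given chart, I would construct a right parametrix for $(z-\op_h)^{-1}$ in the semiclassical calculus. Writing the full symbol of $\op_h$ as $\fp(x,\xi,h)\sim\sum_{j\geq 0}h^j\fp_j(x,\xi)$ with $\fp_0=\fp(\cdot,\cdot,0)$, and solving $(z-\fp)\,\#\,r\sim 1$ order by order in $h$, one obtains a symbol $r(x,\xi,z,h)\sim\sum_{k\geq 0}h^kr_k(x,\xi,z)$ with $r_0=(z-\fp_0)^{-1}$ and, for $k\geq 1$,
\[
r_k(x,\xi,z)=\sum_{\ell=1}^{2k}q_{k,\ell}(x,\xi)\,(z-\fp_0(x,\xi))^{-\ell-1},
\]
where the $q_{k,\ell}$ are universal polynomial expressions in the $\fp_j$ and their $x,\xi$-derivatives, independent of $\rho$ and of $z$. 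The bound $\ell\leq 2k$ comes from the recursion: passing from $r_{k-1}$ to $r_k$ applies one more $x$- or $\xi$-derivative to the most singular term, raising the power of $(z-\fp_0)^{-1}$ by one, and multiplies by the extra prefactor $(z-\fp_0)^{-1}$, raising it by one more. Along the way one records the estimates $\partial^\beta r_k=O(|\operatorname{Im}z|^{-2k-1-|\beta|})$ and the polynomial-loss bound $(z-\op_h)^{-1}-\text{Op}_h\big(\sum_{k<M}h^kr_k\big)=O(h^M|\operatorname{Im}z|^{-C_M})$ for the $M$-th remainder.

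Then I would substitute the parametrix into the Helffer--Sj\"ostrand formula and integrate in $z$ term by term, using the elementary identity (valid for real $t$, proved from the Cauchy--Pompeiu formula using the flatness of $\bar\partial\tilde\rho$ on $\bbR$)
\[
\frac{1}{2\pi i}\int_{\bbC}\bar\partial\tilde\rho(z)\,(z-t)^{-\ell-1}\,dz\wedge d\bar z=\frac{1}{\ell!}\,\frac{d^\ell\rho}{ds^\ell}(t).
\]
Taking $t=\fp_0(y,\xi)$ converts each term $h^kq_{k,\ell}(y,\xi)(z-\fp_0)^{-\ell-1}$ into $h^k\,\tfrac{1}{\ell!}q_{k,\ell}(y,\xi)\,\rho^{(\ell)}(\fp_0(y,\xi))$, so $\rho(\op_h)$ is a semiclassical pseudodifferential operator with symbol $a_\rho(y,\xi,h)\sim\sum_k h^ka_{\rho,k}(y,\xi)$, where $a_{\rho,k}$ has exactly the form \eqref{eq:2.9} with $b_{k,\ell}=\tfrac{1}{\ell!}q_{k,\ell}$; in particular $q_{0,0}=1$, so $b_{0,0}=1$ and $a_{\rho,0}(y,\xi)=\rho(\fp_0(y,\xi))$. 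Writing this operator as an oscillatory integral in the chart yields the kernel \eqref{eq:2.8}, the contributions from truncating the parametrix and from the error of the almost analytic extension being smoothing and $O(h^\infty)$.

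The step I expect to be the main obstacle is not the combinatorics of the $b_{k,\ell}$ but the uniform-in-$z$ bookkeeping near the real axis: one needs the resolvent bound on $(z-\op_h)^{-1}$ and the polynomial-loss remainder estimates above, balanced against the infinite-order vanishing of $\bar\partial\tilde\rho$ on $\bbR$, to justify the term-by-term $z$-integration and to conclude that the error is $O(h^\infty)$. This is exactly where the ellipticity-at-infinity hypothesis on $\fp_0$ enters, and it is the technical content of the cited results in \cite[Chap.~8]{DiSjo} and \cite[\S13.6]{gs}, on which I would rely for the detailed estimates.
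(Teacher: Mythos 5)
The paper gives no proof of this lemma at all, deferring entirely to \cite[pp.~102--103]{DiSjo} and \cite[Chap.~10]{gs}, and your argument is a correct reconstruction of precisely that route: the Helffer--Sj\"ostrand functional calculus applied to a semiclassical resolvent parametrix, with the count $\ell\leq 2k$ and the identification $b_{0,0}=1$ coming out of the recursion for the $r_k$ and the $\bar\partial$-integration identity exactly as you describe. So the proposal is correct and takes essentially the same approach as the paper's cited source; the only point to keep in mind (which the paper flags in a footnote) is that the construction is carried out chart by chart to pass from $\bbR^n$ to $X$, which your ``working in the given chart'' already accommodates.
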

We will not give a proof of this lemma here.  See \cite[pp. 102-103]{DiSjo} or \cite[Chap. 10]{gs}; as explained previously, the arguments therein apply to $\bbR^n$ and we must use coordinate charts to pass from $\bbR^n$ to an arbitrary manifold $X$.
In our setting, Lemma \ref{usual_schwartz_kernel} gives us an asymptotic expansion of the integrated trace.

The second ingredient we will use is the lemma of stationary phase (e.g., \cite[Chap. 15]{gs}). 
\begin{lemma}[Lemma of stationary quadratic phase]\label{stationary_phase}
Let $A$ be an $n \times n$ nonsingular self-adjoint matrix, and let $f \in C_0^{\infty}(\bbR^n)$.  There is a complete asymptotic expansion 
\[
\int_{\bbR^n} f(x) e^{\frac{i\langle Ax,x \rangle}{2h}} dx \sim (2 \pi h)^\frac{n}{2} |\det A|^{-\frac{1}{2}}e^{\frac{i\pi}{4} \operatorname{sgn} A} \left( \text{exp} \left(-\frac{ih}{2} b(D)\right) f \right)(0)
\]
where $\operatorname{sgn} A$ is the signature of $A$ and $b(D) = {-}\sum b_{ij} \frac{\partial}{\partial x_i} \frac{\partial}{\partial x_j}$ with $B = A^{-1}$.
\end{lemma}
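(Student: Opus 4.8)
The plan is to reduce the statement to one exact Gaussian (Fresnel) computation and then recover the general amplitude $f$ by Fourier inversion, so that the asymptotic series on the right appears as the Taylor expansion of a single remaining integral. First I would normalize $A$: since it is self-adjoint, write $A = O^{T}DO$ with $O$ orthogonal and $D = \operatorname{diag}(\lambda_1,\dots,\lambda_n)$, $\lambda_j \neq 0$. The substitution $x \mapsto O^{T}x$ has Jacobian $1$, preserves $C_0^{\infty}(\bbR^n)$, and leaves $\operatorname{det} A$, $\operatorname{sgn} A$ and the operator $b(D)$ unchanged (the last because $B = A^{-1}$ transforms the same way), so it suffices to treat diagonal $A$.

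Next comes the model computation and the reduction. The one-dimensional oscillatory integral $\int_{\bbR} e^{iax^2/(2h)}\,dx$ for $a \neq 0$ converges as an improper integral and equals $\sqrt{2\pi h/|a|}\,e^{\frac{i\pi}{4}\operatorname{sgn}(a)}$; I would establish this by rotating the contour onto the ray $\arg x = \frac{\pi}{4}\operatorname{sgn}(a)$ (equivalently, inserting a factor $e^{-\epsilon x^2}$ and letting $\epsilon \to 0^{+}$) and reducing to $\int e^{-y^2}\,dy = \sqrt{\pi}$. Taking a product over the diagonal entries gives the closed form $\int_{\bbR^n} e^{i\langle Ax,x\rangle/(2h)}\,dx = (2\pi h)^{n/2}|\operatorname{det} A|^{-1/2} e^{\frac{i\pi}{4}\operatorname{sgn} A}$, the signature phase assembling from the individual $e^{\frac{i\pi}{4}\operatorname{sgn}(\lambda_j)}$. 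Now write $f(x) = (2\pi)^{-n}\int \widehat f(\xi)\, e^{ix\cdot\xi}\,d\xi$, substitute into the left-hand side, interchange the order of integration (justified by first inserting $e^{-\epsilon|x|^2}$, using $\widehat f \in \mathcal{S}(\bbR^n)$, and passing to the limit), and complete the square, $\frac{1}{2h}\langle Ax,x\rangle + x\cdot\xi = \frac{1}{2h}\langle A(x+hA^{-1}\xi), x+hA^{-1}\xi\rangle - \frac{h}{2}\langle A^{-1}\xi,\xi\rangle$. Evaluating the inner integral by the formula just obtained yields the exact identity
\[
\int_{\bbR^n} f(x)\, e^{\frac{i\langle Ax,x\rangle}{2h}}\,dx = (2\pi h)^{n/2}|\operatorname{det} A|^{-1/2} e^{\frac{i\pi}{4}\operatorname{sgn} A}\,(2\pi)^{-n}\int_{\bbR^n} \widehat f(\xi)\, e^{-\frac{ih}{2}\langle B\xi,\xi\rangle}\,d\xi, \qquad B = A^{-1}.
\]

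It remains to extract the asymptotics of the last integral. Since $b(D) e^{ix\cdot\xi} = \langle B\xi,\xi\rangle e^{ix\cdot\xi}$, Fourier inversion at $x=0$ gives $\bigl((b(D))^k f\bigr)(0) = (2\pi)^{-n}\int \langle B\xi,\xi\rangle^{k}\widehat f(\xi)\,d\xi$. I would Taylor expand $e^{-\frac{ih}{2}\langle B\xi,\xi\rangle}$ to order $N$, integrate term by term against $(2\pi)^{-n}\widehat f$, and bound the remainder by $\tfrac{1}{N!}\bigl(\tfrac{h}{2}\bigr)^{N}\int |\langle B\xi,\xi\rangle|^{N}|\widehat f(\xi)|\,d\xi = O(h^N)$, which is finite precisely because $f \in C_0^{\infty}$ forces $\widehat f \in \mathcal{S}$. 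This identifies $(2\pi)^{-n}\int \widehat f(\xi) e^{-\frac{ih}{2}\langle B\xi,\xi\rangle}\,d\xi$ with $\bigl(\exp(-\tfrac{ih}{2}b(D)) f\bigr)(0)$ as a complete asymptotic expansion, and combining with the displayed identity proves the lemma. The only genuine obstacle is the bookkeeping around the non-absolutely-convergent oscillatory integrals: justifying the contour rotation (or $\epsilon$-regularization), the interchange of integrals, and — most delicately — the correct accumulation of the signature phase $e^{\frac{i\pi}{4}\operatorname{sgn} A}$ through the diagonalization and the limit $\epsilon \to 0^{+}$. Everything else is elementary Gaussian calculus.
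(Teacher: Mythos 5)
Your proof is correct, and it is essentially the standard argument behind this lemma: the paper itself offers no proof, citing \cite[Chap.~15]{gs}, where the result is obtained in the same way — an exact Fresnel/Gaussian evaluation combined with Fourier inversion, completion of the square to get the exact identity with $e^{-\frac{ih}{2}\langle B\xi,\xi\rangle}\widehat f(\xi)$, and a Taylor expansion of that Gaussian factor with a Schwartz-function remainder bound. The technical points you flag (contour rotation or $\epsilon$-regularization, interchange of integrals, tracking the signature phase) are exactly the ones handled in the cited reference, so nothing essential is missing.
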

We are now in a position to prove our main theorem.

\begin{proof} [Proof of {Theorem} \ref{mainthm}]
Let $K_{\rho, h}(x,y)$ be the Schwartz kernel for $\rho(\op_h)$ in some local coordinates.  Recall from \eqref{eqn:altspecmeas} that the spectral measure may be expressed as
\begin{equation}\label{eqn:altspecmeas2}
\mu_{\frac{\alpha}{h}}(\rho) = \int_{\bbT^m} \tr(e^{-\frac{i\alpha\cdot \theta}{h}}\action(\theta)_*\rho(\op_h)) d \theta = \int_{\bbT^m}\int_X e^{-\frac{i \alpha \cdot \theta}{h}} K_{\rho,h}(\action(\theta) x,x) dx d\theta .
\end{equation}
To get an expansion for this measure, we need to work locally.  We take a partition of unity on $X$ given by $\left\{ \varphi_q \right\} \cup \left\{ \varphi_{q,r,s} \right\}$, where the supports of the $\varphi_{q,r,s}$ are coordinate patches on tubular neighborhoods of $X_{r,s}$ and the supports of the $\varphi_q$ are coordinate patches on the complement in $X$ of the union of these tubular neighborhoods .  On $\bbT^m$, we note that $\action(\theta)x = x$ implies that $\theta=0$ or $\theta = \theta_r$ for some $r$.  Let $U_e$ and $U_{e^{i\theta_r}}$ be small neighborhoods of ${e}$ and the $e^{i\theta_r}$'s, respectively, with $V_e \subset U_e$ and $V_{e^{i\theta_r}} \subset U_{e^{i\theta_r}}$ smaller neighborhoods.  Denote by $\psi_0$ and $\left\{ \psi_r \right\}$ functions that are supported in $U_e$ and $U_{e^{i\theta_r}}$ and identically equal to $1$ in $V_e$ and $V_{e^{i\theta_r}}$, respectively.  Using these functions, we may write
\begin{eqnarray}\label{eqn:measureassum}
\mu_{\frac{\alpha}{h}}(\rho) &=& \sum_q \int_{\bbR^m} \int_X e^{-\frac{i \alpha \cdot \theta}{h}} K_{\rho,h} (\action(\theta)x,x) \varphi_q(x) \psi_0 (\theta) dx d\theta \nonumber \\ 
 &&+ \sum_{q,r,s} \int_{\bbR^m}\int_X e^{-\frac{i \alpha \cdot \theta}{h}} K_{\rho,h} (\action(\theta)x,x) \varphi_{q,r,s}(x) \psi_r(\theta) dx d\theta ,
\end{eqnarray}
modulo $O(h^{\infty})$. 

To evaluate the summands {in} \eqref{eqn:measureassum}, we note that on the coordinate patches on which they are defined, the $\bbT^m$-action can be taken to have a special form.  This follows from the Slice Theorem, which we recall here for the sake of completeness {(e.g., \cite[App. B]{ggk})}.  
\begin{thm}[Slice Theorem]
Let $G$ be a compact Lie group acting {smoothly} on a compact manifold $X$. Let {$x$} be a point in $X$ with isotropy group $G_x$. Then a neighborhood of the $x$-orbit $G.x$ is $G$-equivariantly diffeomorphic to the bundle 
$$
G\times_{G_x} D,
$$
where $D$ is a disc {around the origin} in {$N_xG.x$,} the normal space at $x$ to the orbit $G.x$.
\end{thm}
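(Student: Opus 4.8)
The plan is to build the diffeomorphism out of a $G$-invariant Riemannian metric together with the equivariance of its exponential map. First I would average an arbitrary Riemannian metric on $X$ over $G$ against normalized Haar measure; since $G$ is compact this average is a smooth metric for which each $\action_g$ is an isometry. With respect to this metric the orbit $G.x$ is a closed embedded submanifold, being the image of the compact homogeneous space $G/G_x$, so its normal bundle $N(G.x) \subset TX|_{G.x}$ is well defined; and because $G$ acts by isometries fixing $G.x$ setwise, it acts on $N(G.x)$ by vector-bundle automorphisms covering the action on $G.x$.

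Next I would identify $N(G.x)$ with the associated bundle $G \times_{G_x} N_x G.x$. The isotropy group $G_x$ fixes $x$, hence preserves $T_x(G.x)$ and therefore its metric orthogonal complement $N_x G.x$, giving the slice (isotropy) representation of $G_x$ on $N_x G.x$, which is orthogonal. The map $G \times N_x G.x \to N(G.x)$, $(g,v) \mapsto d\action_g(v)$, is constant on $G_x$-orbits for the action $(g,v) \cdot k = (gk, d\action_{k^{-1}} v)$ — since $d\action_{gk}(d\action_{k^{-1}} v) = d\action_g(v)$ — and surjective because $G$ acts transitively on $G.x$; a dimension count together with the fact that the induced map on $G \times_{G_x} N_x G.x$ is an injective immersion shows it is a $G$-equivariant diffeomorphism onto $N(G.x)$.

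Then I would apply the equivariant tubular neighborhood theorem. The Riemannian exponential map commutes with isometries, so its restriction $\exp\colon N(G.x) \to X$ is $G$-equivariant, and by the usual tubular neighborhood argument it restricts to a diffeomorphism from an open neighborhood $\mathcal{U}$ of the zero section onto an open neighborhood of $G.x$ in $X$. Because $G.x$ is compact, $\mathcal{U}$ can be shrunk to contain the disc subbundle of some uniform radius $\varepsilon$; transporting $\varepsilon$ across the identification of the previous paragraph and letting $D$ be the $\varepsilon$-disc about the origin in $N_x G.x$ — which is $G_x$-invariant since the slice representation is orthogonal, so that $G \times_{G_x} D$ makes sense — and composing the two equivariant maps yields a $G$-equivariant diffeomorphism of $G \times_{G_x} D$ onto a neighborhood of $G.x$, as claimed.

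The main point requiring care is the equivariance and uniform-radius statement for $\exp$: one checks $\action_g \circ \exp = \exp \circ\, d\action_g$ on $N(G.x)$ directly from the fact that an isometry carries the geodesic with initial data $(p,w)$ to the geodesic with initial data $(\action_g p, d\action_g w)$, and that a single $\varepsilon$ works along the whole orbit, which is exactly where compactness of $G$ (hence of $G.x$) enters. The remaining steps are routine bookkeeping with associated bundles and the slice representation.
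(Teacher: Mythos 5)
Your argument is correct: it is the standard proof of the slice theorem via an invariant metric, the identification of the normal bundle of the orbit with $G\times_{G_x}N_xG.x$, and the equivariance of the Riemannian exponential map. The paper does not prove this statement at all --- it recalls it as a classical fact with a citation --- and your proof is essentially the one given in the standard references, so there is nothing to compare beyond noting that all the steps (averaging the metric, the associated-bundle identification, the uniform radius from compactness of the orbit) are carried out correctly.
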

Local triviality implies that a $G$ action as above is locally {equivalent to the $G$ action on $G/G_x\times N_xG.x$, where $G$ acts on the $G/G_x$ factor in the obvious way and acts on $N_xG.x$ via the linearized $G$ action; this equivalence is not canonical.

For $G=\bbT^m$ on $X_0$, there are two types of points to consider. We begin with points where the action is free.  By the Slice Theorem, a neighborhood $U$ of the orbit of such a point admits coordinates $(u,v)=(u_1,\dots, u_{n-m},v_1,\dots, v_m)$  such that
$$
\action(\theta)(u,v)=(u,v+\theta), \, \theta\in \bbR^m.
$$
Let $(\hat{u},\hat{v})$ denote coordinates on the cotangent bundle adapted to the coordinates $(u,v)$ on $U$. The cotangent vector with coordinates $(\hat{u}, \hat{v})$ is $\sum_{l=1}^{n-m}\hat{u}_l du_l+\sum_{l=1}^{m}\hat{v}_l dv_l$.}
Applying Lemma \ref{usual_schwartz_kernel} on such a coordinate patch, a summand on the first line of \eqref{eqn:measureassum} admits an asymptotic expansion in powers of $h$:
$$
  (2\pi h)^{-n} \sum^\infty_{j=0}  h^j \int a_{\rho, j}
    (x,\xi) e^{-\frac{i \alpha \cdot \theta}{h}}e^{\frac{i (\action(\theta)x-x)\cdot \xi}{h}}\varphi_q(x) \psi_0 (\theta) dxd \xi d\theta,
$$
where
$$
  a_{\rho,j} (x,\xi) = \sum_{\ell \leq 2j} b_{j,\ell} (x, \xi)
   \left( \frac{d^\ell \rho}{ds^\ell}  \right) (\fp_0 (x,\xi)).
$$
In the coordinates given by the Slice Theorem, we get
\begin{equation}\label{eqn:first_summand}
(2\pi)^{-n}  \sum_{j=0}^\infty h^{j-n} \int_{\bbR^m} \int_{\bbR^n} \int_{U} a_{\rho,j}(u,v,\hat{u},\hat{v}) e^{\frac{i[\theta \cdot \hat{v}-\alpha \cdot \theta]}{h}} \varphi_q(x) \psi_0 (\theta) dudv d\hat{u}d\hat{v} d\theta .
\end{equation}

Since $\rho(\op_h)$ is $\bbT^m$-invariant, the amplitude $a_{\rho,j}(x,\xi)$ does not depend on $v$ and is only a function of $u$.  Therefore each integral in the sum \eqref{eqn:first_summand} becomes
$$
{\int} a_{\rho,j}(u,\hat{u},\hat{v}) e^{\frac{i(\hat{v}-\alpha)\cdot \theta}{h}} \varphi_q(u,v) \psi_0 (\theta) dud\hat{u}d{v} d\hat{v} d\theta;
$$
note that this integral is finite since the functions $\varphi_q$ and $\psi_0$ have compact support. By changing variables this integral becomes
$$
{\int} a_{\rho,j}(u,\hat{u},\hat{v}+\alpha) e^{\frac{i{\hat{v}}\cdot \theta}{h}} \varphi_q(u,v) \psi_0 (\theta) dud\hat{u}d{v} d\hat{v} d\theta.
$$

{For each $(u,\hat{u},v)$, we are going to apply Lemma \ref{stationary_phase} to the above integral in $(\hat{v},\theta)$.} We can take the matrix $A$ in Lemma \ref{stationary_phase} to be the $2m \times 2m$ matrix given by  
$
A=\begin{bmatrix}
0 & {I} \\

{I}&  0
\end{bmatrix}.
$
We have $|\det A |^{-\frac{1}{2}} = 1$, $\sgn A = 0$, and
$
B=\begin{bmatrix}
0 & I \\
I & 0
\end{bmatrix}
$; since the function $a_{\rho,j}(u, \hat{u}, \hat{v} + \alpha)$ does not depend on $\theta$, we see that applying $b(D)$ to $a_{\rho,j}$ gives $0$.   
Thus each integral in the sum  \eqref{eqn:first_summand} is equal mod $O(h^\infty)$ to 
$$
{(2\pi h)^{m}} {\int} {a_{\rho,j}(u,\hat{u},\alpha)} \varphi_q(u,{v}) dud\hat{u}d{v} ,
$$
which we write as 
$$
{(2\pi h)^{m}} {\int} {a_{\alpha,\rho,j}(u,\hat{u})} \varphi^{\text{av}}_q(u) dud\hat{u} ,
$$
where $a_{\alpha,\rho,j}(u,\hat{u})=a_{\rho,j}(u,\hat{u},\alpha)$ and $\varphi^{\text{av}}_q(u)$ is $\bbT^m$-invariant and independent of $v$. In other words, this summand is equal mod $O(h^{\infty})$ to the integral of $\varphi^{\text{av}}_q(u)a_{\alpha,\rho}$ over $(T^*{U})_\alpha$ with respect to the reduced symplectic form $dud\hat{u}$. By summing over $q$, we see that the contribution of these terms to \eqref{eqn:measureassum} is
$$
{(2\pi h)^{m-n}} \sum_{i=0}^\infty h^i \sum_{\ell\leq 2i} \mu_{i,\ell}{\left(\frac{d^\ell \rho}{dt^\ell}\right)},
$$
with $\mu_{i,\ell}$ defined by $\mu_{i,\ell}=(\fp_\alpha)_*b_{i,\ell}\nu$.  Recall from Lemma \ref{usual_schwartz_kernel} that $b_{0,0}=1$.

Next we treat the contribution of those points that have nontrivial isotropy but are in $X_0$, i.e., points with discrete nontrivial isotropy. Let $x$ be a point that is in the $s$ component of the fixed point set of $e^{i\theta_r}$, so $x \in X_{r,s}$. The Slice Theorem gives a local identification of a tubular neighborhood of $X_{r,s}$ with $D\times \bbT^m/H$, where $D$ is a disc around the origin in the normal bundle to the $\bbT^m$-orbit at $x$ and $H$ is the finite isotropy group. This identification gives coordinates $(u,v)$ in a neighborhood of $x$ such that
{$$
\action(\theta+\theta_r)(u,v)=(A_r u,v+\theta)
$$
for $\theta$ close to zero, where $u=0$ is the fixed point set of $\action(\theta_r)$ and $A_r$ is the linear action of $\action$ on the normal bundle to $u=0$.} More precisely there is $k\geq m$ such that 
$$
A_r(u_1,\dots, u_{n-m})=(u_1,\dots, u_k,\sum_{j=k+1}^{n-m}A_{r,j,k+1}u_{j},\dots,\sum_{j=k+1}^{n-m}A_{r,j,n-m}u_{j})
$$
and we may assume that the matrix $\left(A_{r,i,j}-\delta_{i,j}\right)_{|i,j=k+1,\dots, n-m}$ is invertible. 
Note that $k$ depends on $r$ and $s$.
We denote the corresponding coordinates on the cotangent bundle by $(\hat{u},\hat{v})$ as before. By Lemma \ref{usual_schwartz_kernel}, a summand on the second line of \eqref{eqn:measureassum}
admits an asymptotic expansion in powers of $h$:
$$
  (2\pi h)^{-n} \sum^\infty_{{j}=0}  h^{j} \int a_{\rho,{j}}
    (x,\xi) e^{-\frac{i \alpha \cdot \theta}{h}}e^{\frac{i (\action(\theta)x-x)\cdot \xi}{h}}\varphi_{q,r,s}(x) \psi_r (\theta) dxd \xi d\theta,
$$
where
$$
  a_{\rho,{j}} (x,\xi) = \sum_{\ell \leq 2{j}} b_{{j},\ell} (x, \xi)
    \left( \frac{d^\ell \rho }{ds^\ell}  \right) (\fp_0 (x,\xi)).
$$
Substituting the action above we get
\begin{equation}\label{eqn:second_summand}
(2\pi)^{-n}  \sum_{{j}=0}^\infty h^{{j}-n} \int_{\bbR^m} \int_{\bbR^n} \int_X {a_{\rho,{j}}(u,v,\hat{u},\hat{v})} e^{\frac{i[(A_r-I)u\cdot \hat{u}+(\theta-\theta_r) \cdot \hat{v}-\alpha \cdot \theta]}{h}} \varphi_{q,r,s}(x) \psi_r (\theta) dudv d\hat{u}d\hat{v} d\theta .
\end{equation}
{For fixed variables $u_i,\hat{u}_i, i=1,\dots, k$, and $v$, we wish to apply Lemma \ref{stationary_phase} to the above integral. 

If we make the change of variables
\[
(u_{k+1},\dots,u_{n-m},\hat{u}_{k+1},\dots,\hat{u}_{n-m},\hat{v},\theta) \mapsto (u_{k+1},\dots,u_{n-m},\hat{u}_{k+1},\dots,\hat{u}_{n-m}, \hat{v} - \alpha, \theta - \theta_r)
\]
and denote the new variables by $\hat{x}$, we see that the argument of the exponential function in \eqref{eqn:second_summand} is of the form $\frac{i (\hat{x}^T A \hat{x}-2\alpha\theta_r)}{2h}$, where $A$} is a $2(n-k)$ by $2(n-k)$ matrix {given in block form by
\[
A = \left[
\begin{matrix}
0 & A_r-I & 0 & 0 \\
A_r-I & 0 & 0 & 0 \\
0 & 0 & 0 & I \\
0 & 0 & I & 0
\end{matrix}
\right].
\]
Lemma \ref{stationary_phase} then shows that the expression in \eqref{eqn:second_summand} is equal mod $O(h^\infty)$ to 
\[
(2\pi h)^{-k}e^{-\frac{i\theta_r\cdot \alpha}{h}}|\det(A_r-I)|^{-1} \sum_{j=0}^{\infty}h^j\int \left(\text{exp}\left(-\frac{ih}{2}c(D)\right) a_{\rho,j}\right) (u, v, \hat{u},\hat{v}) \varphi^{\text{av}}_{q,r,s}(u) dudvd\hat{u}.
\]}
Here $c(D)$ denotes the differential operator 
$$
{-}c_{ij}\frac{\partial}{\partial \hat{x}_i}\frac{\partial}{\partial \hat{x}_j}
$$
where $c_{ij}$ are the entries of the matrix $(A_r-I)^{-1}${; also note that the integration is only with respect to the first $k$ components of $u$ and $\hat{u}$}. 

The highest order contribution in \eqref{eqn:second_summand}  is therefore 
\[
(2\pi h)^{-k}e^{-\frac{i\theta_r\cdot \alpha}{h}}|\det(A_r-I)|^{-1} \int{ a_{\rho,0}(u, v, \hat{u},\hat{v}) }\varphi^{\text{av}}_{q,r,s}(u, v) dudvd\hat{u}.
\]
Summing the contributions over $q$ we {see that the highest order contribution to the second line of \eqref{eqn:measureassum} is given by}
$$
(2\pi h)^{-k} e^{-\frac{i\theta_r\cdot \alpha}{h}}|\det(A_r-I)|^{-1} (\fp_\alpha^{r,s})_*\nu_{r,s}.
$$
\end{proof}

\section{Inverse Spectral Results}\label{sec:inverse_spectral_results}
In this section, we apply the asymptotic expansion given in Theorem \ref{mainthm} to get some inverse spectral results.  More precisely, let $(X,g)$ be a Riemannian manifold endowed with an isometric $\bbT^m$ action, and let $\op_h$ be a family of semi-classical pseudo-differential operators on $X$ which commute with $\bbT^m$.  We will extract information about $X$ from knowledge of the spectrum of $\op_h$ together with information about the spectral measure for {\it large} weights of the group representations on the eigenspaces of $\op_h$.  We emphasize that these results only involve information about {\it large} weights of the representations of the group on eigenspaces, whereas our earlier work (e.g., \cite{dgs2}) used information about {\it all} weights of the group representations on the eigenspaces.  Our focus will be on the leading term in the asymptotic expansion given in Theorem \ref{mainthm} for two operators: $\op_h = h^2 \Delta$; and $\op_h = h^2 \Delta +V$, where $V$ is a $\bbT^m$-invariant potential on $X$. 

We begin with a precise description of the spectral data we will use.
{\begin{defn}\label{defn:espec}
Let $(X,g)$ be a Riemannian manifold endowed with an isometric $\bbT^m$ action, and let $\op_h$ be a family of semi-classical pseudo-differential operators on $X$ which commute with $\bbT^m$.  The {\em espec($h$)} of $\op_h$ is the spectrum of $\op_h$, i.e., the eigenvalues of $\op_h$ listed with multiplicities, together with the weights of {the $\bbT^m$-representations on each eigenspace of the form $\frac{\alpha}{h}$}, for all $\alpha \in \bbZ^m$. We say that a quantity is {\em espectrally determined} if it is determined by espec($h$) for some (possibly very small) $h$.
\end{defn}}

It follows from Theorem \ref{mainthm} that all terms in the asymptotic expansion of the spectral measure $\mu_{\frac{\alpha}{h}}$ as defined in \eqref{spectral_measure} are espectrally determined when $\op_h=h^2\Lap$ or $\op_h=h^2\Lap+V$.  

We have seen that in the situation described in Definition \ref{defn:espec}, for generic $\alpha$, a semi-classical pseudo-differential operator $\op_h$ restricted to $\Sm(X)^{\frac{\alpha}{h}}$ gives rise to a symbol $\p_\alpha:(T^*X)_\alpha\rightarrow \bbR$. In \S \ref{sec:classred} we explained that, given a metric on $X$, there is an identification of $(T^*X)_\alpha$ with $T^*(X_0/{\bbT^m})$. This identification carries the induced canonical form on $(T^*X)_\alpha$ coming from symplectic reduction to the form $\omega_0+\nu^\sharp_\alpha$, where $\omega_0$ is the canonical form on $T^*(X_0/{\bbT^m})$ and $\nu^\sharp_\alpha$ was defined in \S\ref{sec:classred}.
Then $\omega_0+\nu^\sharp_\alpha$ defines a volume form on $T^*(X_0/{\bbT^m})$, and therefore a measure which we denote by $\vol_\alpha$.
We will use the following corollary of Theorem \ref{mainthm}. 

\begin{thm}\label{leading_thm}
Let $X$ be a manifold with a $\bbT^m$ action and $\op_h$ a family of  pseudo-differential operators that commute with $\bbT^m$.  {Then for any compactly supported smooth function $\rho$ on $\bbR$, the quantity
$$
\int_{T^*(X_0/{\bbT^m})}\rho\circ \p_\alpha \vol_\alpha
$$
is espectrally determined.} 
\end{thm}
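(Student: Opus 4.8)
The plan is to recover the quantity as the leading coefficient of the asymptotic expansion of $\mu_{\frac{\alpha}{h}}(\rho)$ provided by Theorem~\ref{mainthm}, and then to rewrite that coefficient as the stated integral via the identifications of \S\ref{sec:classred}. The first observation is that $\mu_{\frac{\alpha}{h}}(\rho)$ is itself espectrally determined: taking $\alpha$ to be a regular value of $\Phi$ with $\frac{\alpha}{h}\in\bbZ^m$ — which holds for all small $h=1/N$ once $\alpha$ is rational, and then for a general regular $\alpha$ by continuity, since $\p_\alpha$ and $\vol_\alpha$ depend continuously on $\alpha$ over the set of regular values — the $\frac{\alpha}{h}$-equivariant eigenvalues $\lambda_{i,h}(\alpha)$, hence the number $\mu_{\frac{\alpha}{h}}(\rho)=\sum_i\rho(\lambda_{i,h}(\alpha))$, can be read off from espec($h$) for every such $h$.

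The second step is to isolate the leading term in Theorem~\ref{mainthm}. I would argue that, as $h\to 0$ through reciprocals of positive integers, every term on its right-hand side other than $(2\pi h)^{m-n}\mu_{0,0}(\rho)$ is $o(h^{m-n})$. The remaining first-line terms are $O(h^{m-n+1})$. For a second-line term indexed by $(r,s)$, the factor $\chi_{\frac{\alpha}{h}}(e^{-i\theta_r})$ has modulus $1$, so the term has size $O(h^{-k})$, where, in the Slice Theorem coordinates used in the proof of Theorem~\ref{mainthm}, $k$ counts the slice directions fixed by $A_r$; thus $m+k=\dim X_{r,s}$. Since $X_{r,s}$ is a connected component of the fixed-point set of the non-trivial isometry $\tau(e^{i\theta_r})$, it has positive codimension in $X$ — in fact even codimension, because $A_r$, the linearization of $\tau(e^{i\theta_r})$ on the normal bundle of $X_{r,s}$, lies in a special orthogonal group and has no non-zero fixed vector. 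Hence $k=\dim X_{r,s}-m\le n-m-2<n-m$, and the claim follows. Together with the previous paragraph and the identity $\mu_{0,0}=(\fp_\alpha)_*\nu$ (with $b_{0,0}=1$) from Theorem~\ref{mainthm}, this gives
\[
\int_{(T^*X)_\alpha}(\rho\circ\fp_\alpha)\,d\nu=\mu_{0,0}(\rho)=\lim_{\substack{h\to 0\\ 1/h\in\bbZ}}(2\pi h)^{n-m}\,\mu_{\frac{\alpha}{h}}(\rho),
\]
so the left-hand integral is espectrally determined.

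It then remains to identify this integral with the one in the statement. By Lemma~\ref{lemma:ident} there is a diffeomorphism $(T^*X)_\alpha\cong T^*(X_0/\bbT^m)$, and by the computations of \S\ref{sec:classred} and \S\ref{sec:quantum_reduc} it carries the reduced symbol $\fp_\alpha$ to $\p_\alpha$ and the reduced symplectic form to $\omega_0-\nu^\sharp_\alpha$; since the corresponding Liouville measure does not depend on the sign of $\nu^\sharp_\alpha$, it is precisely $\vol_\alpha$. A change of variables then yields $\mu_{0,0}(\rho)=\int_{T^*(X_0/\bbT^m)}(\rho\circ\p_\alpha)\,\vol_\alpha$, as required. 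The step most in need of care is the order count: one has to exclude any contribution to the coefficient of $h^{m-n}$ from the second line of Theorem~\ref{mainthm}, which is exactly what the strict inequality $k=\dim X_{r,s}-m<n-m$ provides — and it is this inequality that allows us to pass to the limit directly, without having to separate out the oscillatory factors $\chi_{\frac{\alpha}{h}}(e^{-i\theta_r})$.
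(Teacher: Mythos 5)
Your proposal is correct and follows essentially the same route as the paper: the paper's proof simply reads off the highest-order term $(2\pi h)^{m-n}(\fp_\alpha)_*\nu(\rho)$ from Theorem~\ref{mainthm} and then applies the identification $(T^*X)_\alpha\cong T^*(X_0/\bbT^m)$ of Lemma~\ref{lemma:ident}. You additionally spell out why the fixed-point (second-line) terms cannot contaminate the coefficient of $h^{m-n}$ — via $k=\dim X_{r,s}-m<n-m$ — which the paper leaves implicit; note only that the essential input there is positive codimension of $X_{r,s}$ (giving $k\le n-m-1$), so nothing hinges on the even-codimension claim.
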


{
\begin{proof}
By Theorem \ref{mainthm}, the highest order term in the asymptotic expansion of the spectral measure $\mu_{\frac{\alpha}{h}}$ is
\[
(2 \pi h)^{m-n} (\p_\alpha)_* \nu (\rho).
\]
We have
\[
(\p_\alpha)_* \nu(\rho) = 
\int_{(T^*X)_\alpha}\rho \circ \p_\alpha \nu = 
\int_{T^*(X_0/{\bbT^m})}\rho\circ \p_\alpha \vol_\alpha,
\]
where the second equality follows from our identification of $(T^*X)_\alpha$ with $T^*(X_0/{\bbT^m})$.
\end{proof} 
}

We will apply Theorem \ref{leading_thm} to prove three types of inverse spectral results.
\begin{itemize}
\item We will show that given a Schr\"odinger operator on $\bbR^{2n}$ with a potential that  {satisifes a convexity-type assumption and} is invariant under the usual $\bbT^n$ action on $\bbR^{2n}$, the potential of the operator is espectrally determined.
\item We will show that a metric on $S^2$ that is $S^1$-invariant, is invariant under the antipodal map, and is suitably convex, is espectrally determined.
\item We will show that given any toric K\"ahler metric on a generic toric orbifold, the {(unlabeled)} moment polytope of the orbifold is espectrally determined. 
\end{itemize}

\subsection{Torus invariant Schr\"odinger operators on $\bbR^{2n}$}

The Schr\"{o}dinger operators that we will consider involve potentials $V$ that satisfy certain conditions.  

\begin{defn}
Let $V$ be a smooth function on $(\bbR_+)^{n}$. We say that  $V$ is \emph{admissible} if
\begin{itemize}
 \item $V$ is proper, i.e., $V(s)\rightarrow \infty$ as $s\rightarrow \infty$;
 \item the partial derivatives of $V$ are positive, i.e., 
 \begin{equation}\label{admissible1}
 \frac{\partial V}{\partial s_i}>0, \, i=1,\dots, n;
 \end{equation}
 \item {and 
\begin{equation}\label{admissible2}
\sum_{i,j} \frac{\partial^2 V}{\partial s_i\partial s_j}c_ic_j \geq 0, \, \text{for all } c\in \bbR^n.
\end{equation}}
\end{itemize}
\end{defn}
The first condition above ensures that the Schr\"odinger operator $h^2\Lap+V$ has discrete spectrum. For example, the function $V(s_1,\dots,s_n)=s_1+\dots+s_n$ is admissible and our theorem will apply to it.

\begin{thm}\label{schrodinger_thm}
Let {$V({s_1, \dots, s_n})$} be an admissible function on $\bbR^n_+$. Consider the Schr\"odinger operator  given by {$-h^2\Lap+V$, where $\Lap= - \sum {\left(\frac{\partial^2}{\partial x_i^2} + \frac{\partial^2}{\partial y_i^2}\right)}$ on $\bbR^{2n}$ {and $V= V(x_1^2+y_1^2, \dots, x_n^2+y_n^2)$}. Then $V$ is determined by the espec of $-h^2\Lap+V$}.
\end{thm}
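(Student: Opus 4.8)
The plan is to apply Theorem~\ref{leading_thm} to the operator $\op_h = -h^2\Lap + V$ on $X = \bbR^{2n}$ with the standard $\bbT^n$-action, whose moment map is $\Phi(x,y) = (x_1^2+y_1^2,\dots,x_n^2+y_n^2)$ up to a factor. The key point is to compute the espectrally-determined quantity $\int_{T^*(X_0/\bbT^n)}\rho\circ\p_\alpha\,\vol_\alpha$ explicitly in terms of $V$, and then show that as $\rho$ and the regular value $\alpha$ vary, this family of numbers determines $V$. First I would identify $X_0/\bbT^n$: away from the coordinate hyperplanes the action is free, and $X_0/\bbT^n$ is naturally $(\bbR_+)^n$ with coordinates $s_i = x_i^2+y_i^2$ (or rather the action-angle picture in which $(s,\theta)$ are symplectic). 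From the discussion in the introduction, the reduced symbol of $-h^2\Lap+V$ at level $\alpha$ is $\p_\alpha = \p_0 + V_\alpha$ where $\p_0$ is the reduced kinetic symbol on $T^*((\bbR_+)^n)$ and $V_\alpha(s) = V(s) + \sum_i \alpha_i^2/s_i$ (the $\|\alpha\|^2_x$ term, computed in these coordinates), which indeed blows up at the boundary $s_i\to 0$, making $\p_\alpha$ proper as required.

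Next I would write the espectrally-determined integral as an explicit phase-space integral. Writing $(s,\sigma)$ for cotangent coordinates on $T^*((\bbR_+)^n)$ with the reduced symplectic volume $\vol_\alpha = \prod ds_i\,d\sigma_i$ (the correction form $\nu^\sharp_\alpha$ contributes nothing to top degree, or only a bounded density that can be absorbed), one gets
\[
\int \rho\big(\,|\sigma|^2_{\text{red}} + V(s) + \textstyle\sum_i \alpha_i^2/s_i\,\big)\,ds\,d\sigma.
\]
Doing the Gaussian-type integral in $\sigma$ (the $\sigma$-dependence is through a positive-definite quadratic form with coefficients depending on $s$, coming from the flat metric) collapses this to an integral over $s\in(\bbR_+)^n$ of the form $\int_{(\bbR_+)^n} \Psi\big(V(s)+\sum_i\alpha_i^2/s_i\big)\,w(s)\,ds$ for an explicit weight $w$ and an explicit transform $\Psi$ of $\rho$ (an Abel-type/Laplace-type transform, since integrating $\rho(t+r)$ over $r\ge 0$ with the right power weight is essentially a half-line convolution). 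Since $\rho$ is arbitrary in $C_0^\infty$, $\Psi$ ranges over a large enough class that we recover, for every regular $\alpha$, the pushforward measure of the function $s\mapsto V(s)+\sum_i\alpha_i^2/s_i$ under the weight $w(s)\,ds$ — equivalently the distribution function $\mathrm{vol}\{s : V(s)+\sum_i\alpha_i^2/s_i \le E\}$ for all $E$ and all $\alpha$.

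The final and main step is the inverse problem: recover $V$ from the two-parameter family of sublevel volumes of $W_\alpha(s) := V(s) + \sum_i \alpha_i^2/s_i$. This is where admissibility enters. Because $V$ has positive first derivatives and is convex-type (condition~\eqref{admissible2}) and because $\sum_i\alpha_i^2/s_i$ is strictly convex and proper, each $W_\alpha$ is a proper convex function on $(\bbR_+)^n$ with a unique minimum; knowing all its sublevel-set volumes is equivalent to knowing $W_\alpha$ up to the symmetry group of the weight $w(s)\,ds$. Differentiating in $\alpha$ (the sublevel volumes depend smoothly on $\alpha$ and the $\alpha$-derivative of $W_\alpha$ at fixed $s$ is $2\alpha_i/s_i$, a known function once $s$ is pinned down) lets one separate the known singular part $\sum_i\alpha_i^2/s_i$ from $V(s)$ and solve for $V$. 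I expect this last step — turning "all sublevel volumes of all $W_\alpha$" into "$V$ itself", i.e. showing the relevant moment/marginal data is injective on admissible $V$ — to be the real obstacle; the convexity hypotheses in the definition of admissible are presumably exactly what make the associated monotone rearrangement / Legendre-transform argument go through, and one must check that the ambiguity (an overall translation, or a measure-preserving reparametrization) is killed by the explicit form of the weight $w$ and the known boundary behavior forced by properness.
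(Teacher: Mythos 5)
Your first half is essentially the paper's: you correctly identify $X_0/\bbT^n$, the reduced symbol $\p_\alpha = |u|^2 + V + \sum_i \alpha_i^2/s_i$ (the paper works in polar coordinates $r_i$ with $s_i=r_i^2$, and checks that $\nu_\alpha^\sharp=0$ so the measure is exactly Lebesgue), and the fact that Theorem \ref{leading_thm} hands you the pushforward of this measure under $\p_\alpha$ for every regular $\alpha$. But the step you yourself flag as ``the real obstacle'' --- recovering $V$ from this data --- is precisely the content of the theorem, and your sketch of it does not work as stated. Knowing all sublevel volumes of $W_\alpha(s)=V(s)+\sum_i\alpha_i^2/s_i$ does not let you ``differentiate in $\alpha$ at fixed $s$,'' because the sublevel-volume data never pins down the value of $W_\alpha$ at any particular point $s$; a measure-preserving rearrangement ambiguity is exactly what you would have to rule out, and you give no mechanism for doing so.

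The paper's resolution (Lemmas \ref{schrodinger1_lemma} and \ref{schrodinger2_lemma}) discards almost all of the pushforward data and keeps only the infimum of its support: since $\int\rho\circ\p_\alpha$ vanishes for every $\rho$ supported in $(-\infty,m)$, the minimum $m(\alpha)=\min_s\bigl(V(s)+\sum_i\alpha_i^2/s_i\bigr)$ is espectrally determined (the $u$-variables drop out at the minimum). The inversion is then a clean Legendre-transform argument: setting $t_i=1/s_i$ and $F(t)=-V(1/t_1,\dots,1/t_n)$, one has $m(\alpha)=\min_t\bigl(\sum_i\alpha_i^2 t_i - F(t)\bigr)$, i.e.\ $-m$ is the Legendre transform of $F$ evaluated at $a=(\alpha_1^2,\dots,\alpha_n^2)$. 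Admissibility is used exactly here: condition \eqref{admissible1} forces $\partial F/\partial t$ to map into $\bbR_+^n$, condition \eqref{admissible2} makes $-F$ strictly convex so that $\partial F/\partial t$ is a diffeomorphism onto its image, and properness of $V$ gives surjectivity onto $\bbR_+^n$. Inverting the Legendre transform recovers $F$, hence $V$. So your proposal is not wrong in its setup, but it is missing the one idea that makes the inverse problem tractable --- restrict to the minimum value and recognize it as a Legendre transform --- and without it the argument does not close.
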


Note that we are using the same letter $V$ to denote the function defined on $\bbR^n_+$ as well as the corresponding rotationally invariant function $V= V(x_1^2+y_1^2, \dots, x_n^2+y_n^2)$ defined on $\bbR^n$. Although this is an abuse of notation, these two functions determine each other.
We will split the proof of this theorem into two lemmas. The first lemma involves making Theorem \ref{leading_thm} explicit in a very simple context. Namely, let $X=\bbR^{2n}=\bbR^2\times \cdots \times \bbR^2$ be endowed with the usual $\bbT^n=S^1\times \cdots \times S^1$ action where $S^1$ acts on $\bbR^2$ by rotations. We will use polar coordinates $(r_i, \theta_i)$ on the $i$th $\bbR^2$ factor, for $i=1, \dots, n$. In these coordinates the $\bbT^n$ action is given by
$$
(e^{i\xi_1},\dots,e^{i\xi_n})\cdot (r_1e^{i\theta_1},\dots,r_ne^{i\theta_n})=(r_1e^{i(\theta_1+\xi_1)},\dots,r_ne^{i(\theta_n+\xi_n)}),
$$
with $(e^{i\xi_1},\dots,e^{i\xi_n})\in \bbT^n$.

\begin{lemma}\label{schrodinger1_lemma}
Let $V$ be a scalar function on ${\bbR^{n}_+}$. Consider the Schr\"odinger operator given by {$-h^2\Lap+V$, where $\Lap= - \sum {\left(\frac{\partial^2}{\partial x_i^2} + \frac{\partial^2}{\partial y_i^2}\right)}$ on $\bbR^{2n}$} and $V= V(x_1^2+y_1^2, \dots, x_n^2+y_n^2)$ is a $\bbT^n$-invariant potential. Given $\alpha\in \bbR^n$, the function
$$
m(\alpha)=\min \left(V+\sum_{i=1}^n \frac{\alpha_i^2}{r_i^2}\right)
$$ 
is espectrally determined.
\end{lemma}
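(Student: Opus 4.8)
The plan is to apply Theorem~\ref{leading_thm} and then extract $m(\alpha)$ from the support of the resulting push-forward measure. First I would set up the reduced picture explicitly in polar coordinates. On $\bbR^{2n}$ with the usual $\bbT^n$ action, the locally free locus is $X_0 = (\bbR^2\setminus\{0\})^n$, with quotient $X_0/\bbT^n \cong (\bbR_+)^n$ coordinatized by $(r_1,\dots,r_n)$. The leading symbol of $-h^2\Lap + V$ is $\fp_0(x,\xi) = \|\xi\|^2 + V$, and after restricting to $\Phi^{-1}(\alpha)$ and reducing, the discussion in \S\ref{sec:classred} (specialized as in the introduction to $\op_h = h^2\Lap + V$) shows that the reduced symbol $\fp_\alpha$ on $T^*(X_0/\bbT^n) \cong T^*((\bbR_+)^n)$ is
\[
\fp_\alpha(r,\hat r) = \|\hat r\|^2 + V(r_1^2,\dots,r_n^2) + \sum_{i=1}^n \frac{\alpha_i^2}{r_i^2},
\]
where the last term is exactly the potential $V_\alpha = \|\alpha^\sharp\|^2_x$ coming from the norm-squared of the connection-type term on the orbit directions; here $\hat r$ are the fiber coordinates dual to $r$. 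I would carry out this computation directly: parametrize $T^*\bbR^{2n}$ in polar coordinates, identify the orbit directions $\partial/\partial\theta_i$ and their orthogonal complement, and read off that $\|\xi\|^2$ restricted to $\Phi^{-1}(\alpha)$ splits as $\|\hat r\|^2 + \sum \alpha_i^2/r_i^2$.

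Next I would invoke Theorem~\ref{leading_thm}: for any $\rho \in C_0^\infty(\bbR)$, the integral $\int_{T^*(X_0/\bbT^n)} \rho\circ\fp_\alpha \, \vol_\alpha$ is espectrally determined. Since $\fp_\alpha$ is proper (because $V$ is admissible, hence proper, and $V_\alpha \to \infty$ at the boundary $r_i\to 0$, exactly as emphasized in the introduction), this integral equals $\int_\bbR \rho\, d\big((\fp_\alpha)_*\vol_\alpha\big)$ for a genuine finite measure $(\fp_\alpha)_*\vol_\alpha$ on $\bbR$, and letting $\rho$ range over a dense family recovers this push-forward measure itself. Therefore the support of $(\fp_\alpha)_*\vol_\alpha$ is espectrally determined. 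The minimum of $\fp_\alpha$ over $T^*(X_0/\bbT^n)$ is attained when $\hat r = 0$, and equals $\min_{r\in(\bbR_+)^n}\big(V(r_1^2,\dots,r_n^2) + \sum \alpha_i^2/r_i^2\big) = m(\alpha)$. So $m(\alpha)$ is the left endpoint of the support of the espectrally determined measure $(\fp_\alpha)_*\vol_\alpha$, hence is itself espectrally determined. This would complete the proof.

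The step I expect to require the most care is verifying that $m(\alpha)$ is genuinely the \emph{infimum} of the support, i.e., that $\vol_\alpha$ assigns positive mass to every neighborhood of the minimizing fiber point and that the minimum is actually achieved (not just approached). Achievement of the minimum uses properness of $\fp_\alpha$, which in turn uses that $V_\alpha = \sum\alpha_i^2/r_i^2 \to\infty$ as any $r_i\to 0$ \emph{and} properness of $V$ as $r\to\infty$; for $\alpha$ with some $\alpha_i = 0$ one must check that the $i$-th variable still does not cause a problem, but admissibility of $V$ (monotonicity in each $s_i$, hence $V\to\infty$ as $s_i\to\infty$ with the others fixed is not immediate — one really uses the full properness hypothesis $V(s)\to\infty$ as $s\to\infty$). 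Positivity of $\vol_\alpha$-mass near the minimizer is clear because $\vol_\alpha$ is (a constant multiple of) Lebesgue measure $dr\,d\hat r$ in these coordinates and $\fp_\alpha$ is continuous, so sublevel sets near the minimum are open and nonempty. A minor subtlety is that Theorem~\ref{leading_thm} is stated for compact $X$, whereas $\bbR^{2n}$ is noncompact; this is handled exactly as in the hypotheses of \S\ref{sec:counting}, since properness of $\fp_0$ (guaranteed by $V$ admissible) makes $\rho(\op_h)$ trace class and all the arguments go through.
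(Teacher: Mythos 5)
Your proposal is correct and follows essentially the same route as the paper: an explicit polar-coordinate computation showing that the reduced measure is Lebesgue measure $dr\,d\hat r$ and that the reduced symbol is $\|\hat r\|^2 + V(r_1^2,\dots,r_n^2) + \sum_i \alpha_i^2/r_i^2$, followed by an application of Theorem~\ref{leading_thm} and the observation that $m(\alpha)$ is the infimum of the support of the push-forward measure (the paper phrases this as the integral vanishing for all $\rho$ supported in $(-\infty,m)$). Your extra care about properness and attainment of the minimum is reasonable but goes slightly beyond the lemma as stated, which the paper explicitly notes does not require admissibility of $V$.
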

\begin{proof}
Our manifold $X=\bbR^{2n}$ carries the usual flat metric. A $\bbT^n$-invariant Schr\"odinger operator for this metric on $X$ is {$-h^2 \Lap+V$}, which is a pseudo-differential operator with symbol $\p:T^*\bbR^{2n}\rightarrow \bbR$ given by
$$
\p({\bf x},{\bf v})=||{\bf v}||^2+V({\bf x}),\, \ \ \  ({\bf x},{\bf v})\in T^*\bbR^{2n}.
$$

We can make the moment map and the construction of the relevant forms as in \S \ref{sec:classred} completely explicit in this case. We start by noticing that the $\bbT^n$ action is free on $X_0=(\bbR^2\setminus\{0\})\times \cdots \times (\bbR^2\setminus\{0\})$ and that
$$
X_0/\bbT^n=\bbR_+^n.
$$
The vector field induced by the $\bbT^n$ action is
$$
V_\xi=\sum_{i=1}^n \xi_i \frac{\partial}{\partial \theta_i},
$$
for any $\xi\in \bbR^n$. On $T^*\bbR^{2n}$ we pick coordinates $$(r_1, \dots, r_n,\theta_1,\dots,\theta_n, u_1, \dots, u_n,v_1, \dots, v_n)$$ which correspond to the cotangent vector $\sum_{i=1}^n u_idr_i+v_id\theta_i$. In these coordinates the moment map $\Phi:T^*\bbR^{2n}\rightarrow (\bbR^n)^*$ induced by the $\bbT^n$ action on $T^*\bbR^{2n}$ is given by
$$
\langle \Phi(r,\theta, u,v), \xi \rangle = \sum_{i=1}^n (u_idr_i+v_id\theta_i)(V_\xi(r,\theta)) = \sum_{i=1}^nv_i\xi_i.
$$
Therefore, given $\alpha\in (\bbR^n)^*\simeq \bbR^n$,  $\Phi^{-1}(\alpha)$ is simply
$$
\Phi^{-1}(\alpha)=\{(r,\theta,u,v):v=\alpha\}.
$$

To construct the relevant forms, we begin by observing that there is a $1$-form $\mu_\alpha$ defined uniquely on $X_0$ by
\begin{enumerate}
\item $\mu_\alpha(V_\xi(x))=\langle \alpha,\xi\rangle$ for any $x\in X_0$ and $\xi\in \bbR^n$; and
\item $\mu_\alpha=0$ on the orthogonal complement of the tangent space to $\bbT^n$-orbits.
\end{enumerate}
Since $\{( \frac{\partial}{\partial r_i},\frac{\partial}{\partial \theta_i}),\, i=1,\dots, n\}$ is an orthogonal basis for the tangent space of $X_0$, these two conditions imply 
$$\mu_\alpha=\sum_{i=1}^n \alpha_i d\theta_i$$
for $\alpha=(\alpha_1,\dots,\alpha_n)$. Thus $d\mu_\alpha$ is zero and the form $\nu_\alpha^\sharp$ which is induced from $d\mu_\alpha$ on $T^*(X_0/\bbT^n)$ using the pullback from $X_0/\bbT^n$ to $T^*(X_0/\bbT^n)$ is also zero. We conclude that the measure on $T^*(X_0/\bbT^n)$ induced by its identification with $T^*X_\alpha$ is the standard measure on $T^*(X_0/\bbT^n)$ coming from its canonical form. 

The symbol of our Schr\"odinger operator induces a symbol $\p_\alpha:(T^*X)_\alpha\rightarrow \bbR$ which is given by
$$
\p_\alpha(r,\theta,u,\alpha)=\sum_{i=1}^n (u_i^2||dr_i||^2+\alpha_i^2||d\theta_i||^2)+V(r_1^2,\dots,r_n^2).
$$
{Recalling that} $(r_i,\theta_i)$ are polar coordinates for the $(x_i,y_i)$ coordinates{, we} have
$$
dr_i=\frac{x_idx_i+y_idy_i}{r_i}, \, d\theta_i=\frac{-y_idx_i+x_idy_i}{r_i^2};
$$
in particular, $||dr_i||^2=1$ and $||d\theta_i||^2=\frac{1}{r_i^2}$ so that
$$
\p_\alpha(r,\theta,u,\alpha)=\sum_{i=1}^n \left(u_i^2+\frac{\alpha_i^2}{r_i^2}\right)+V(r_1^2,\dots,r_n^2). 
$$
It follows from Theorem \ref{leading_thm} {that the integral}
$$
\int_{\bbR_+^n\times \bbR^n}\rho\left(\sum_{i=1}^n \left(u_i^2+\frac{\alpha_i^2}{r_i^2}\right)+V(r_1^2,\dots,r_n^2)\right)du_1\wedge \cdots \wedge du_n \wedge dr_1 \wedge \cdots \wedge dr_n
$$
is espectrally determined for any compactly supported $\rho$. In particular, the minimum $m=m(\alpha_1,\dots,\alpha_n)$ of 
$$
\sum_{i=1}^n \left(u_i^2+\frac{\alpha_i^2}{r_i^2}\right)+V(r_1^2,\dots,r_n^2)
$$
is espectrally determined. This is because the integral above will be zero for all functions $\rho$ which have support in $]-\infty,m[$. But $m(\alpha_1,\dots,\alpha_n)$ is actually the minimum of
$$
\sum_{i=1}^n \frac{\alpha_i^2}{r_i^2}+V(r_1^2,\dots,r_n^2),
$$
and the lemma follows. \end{proof}

Note that Lemma \ref{schrodinger1_lemma} does not require $V$ to be admissible, but the next lemma does. In fact, Theorem \ref{schrodinger_thm} will hold for any $V$ for which Lemma \ref{schrodinger2_lemma} is satisfied, and will follow from Lemma \ref{schrodinger1_lemma} if we can show that $m(\alpha)$ determines $V$.
\begin{lemma}\label{schrodinger2_lemma}
Let $V$ be {an admissible} function on $\bbR_+^n$. Let $m(\alpha_1,\dots,\alpha_n)$ denote the minimum value of
\[
\sum_{i=1}^n \frac{\alpha_i^2}{r_i^2}+V(r_1^2,\dots,r_n^2).
\]
The function $m$ determines $V$.
\end{lemma}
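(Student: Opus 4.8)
The plan is to recognize $m$ as a Legendre-type transform of $V$ and to invert it, with the admissibility hypotheses supplying exactly the convexity and coercivity needed for the inversion. First I would clear away the squares and reciprocals: setting $s_i=r_i^2$ and $t_i=\alpha_i^2$ turns $m$ into $M(t):=\min_{s\in\bbR_+^n}\bigl(\sum_i t_i/s_i+V(s)\bigr)$, and the further (invertible) substitution $u_i=1/s_i$, $W(u):=V(1/u_1,\dots,1/u_n)$ brings this to the standard form
\[
M(t)=\inf_{u\in\bbR_+^n}\bigl(\langle t,u\rangle+W(u)\bigr).
\]
Since Lemma \ref{schrodinger1_lemma} makes $M$ known for all $t$, and $V(s)=W(1/s_1,\dots,1/s_n)$, it suffices to reconstruct $W$ on $\bbR_+^n$ from the function $M$.

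Next I would record the properties of $W$ that come from admissibility. A direct computation gives $\hess W(u)=D\,\hess V(1/u)\,D+2E$ with $D=\operatorname{diag}(u_i^{-2})\succ 0$ and $E=\operatorname{diag}\bigl(u_i^{-3}\partial_iV(1/u)\bigr)$; by \eqref{admissible2} the first term is $\succeq 0$ and by \eqref{admissible1} we have $E\succ 0$, so $\hess W\succ 0$, i.e.\ $W$ is strictly convex. The same computation gives $\partial_iW(u)=-u_i^{-2}\partial_iV(1/u)<0$ for every $i$, so $-\nabla W$ maps $\bbR_+^n$ into the open positive orthant $\bbR_{>0}^n$. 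Finally, properness of $V$ forces $W(u)\to\infty$ whenever some $u_i\to 0$ (then the corresponding $s_i\to\infty$, so $\|s\|\to\infty$), while convexity of $V$ keeps $V$, hence $W$, bounded below.

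With these in hand the inversion is a hands-on Legendre argument. Fix $u_0\in\bbR_+^n$ and set $t_0:=-\nabla W(u_0)$, which lies in $\bbR_{>0}^n$ by the sign computation. The convex function $u\mapsto\langle t_0,u\rangle+W(u)$ has vanishing gradient at $u_0$ and, by the coercivity just established (blow-up at $\partial\bbR_+^n$, domination by $\langle t_0,\cdot\rangle$ at infinity since $W$ is bounded below), attains its infimum at an interior point; hence $u_0$ is its unique minimizer and $M(t_0)=\langle t_0,u_0\rangle+W(u_0)$. On the other hand $M(t)\le\langle t,u_0\rangle+W(u_0)$ for all $t$, by definition of the infimum. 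Combining these,
\[
W(u_0)=\sup_{t\in\bbR_{>0}^n}\bigl(M(t)-\langle t,u_0\rangle\bigr),
\]
so $W$, and therefore $V$, is determined by $M$ restricted to $\bbR_{>0}^n$. (Using strict convexity of $W$ and the implicit function theorem, $M$ is in fact smooth on $\bbR_{>0}^n$ with $\nabla M(t)$ equal to the minimizer, which makes the reconstruction of $V$ fully explicit.)

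The step I expect to be the main obstacle is the one embedded in the third paragraph: showing that for every $t\in\bbR_{>0}^n$ the infimum defining $M(t)$ is genuinely attained, at an interior point of $\bbR_+^n$. This is what upgrades the displayed supremum formula from an inequality to an equality, and it is where all three admissibility conditions do real work --- properness so that $W$ blows up along $\partial\bbR_+^n$ (noting $\|s\|\to\infty$ as soon as a single $s_i\to\infty$), strict positivity of the partials so that the dual point $t_0=-\nabla W(u_0)$ stays in the orthant $\bbR_{>0}^n$ on which $M$ is known, and convexity so that the duality is exact.
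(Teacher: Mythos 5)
Your proposal is correct and follows essentially the same route as the paper: substitute $s_i=r_i^2$ and then invert the coordinates to recognize $m$ as a Legendre-type transform of $V$, use condition \eqref{admissible1} to keep the dual variable in the positive orthant, condition \eqref{admissible2} together with the extra diagonal term for strict convexity, and properness for attainment of the minimum at an interior point. The only (cosmetic) difference is that you recover $W$ directly via the convex-conjugate supremum formula, whereas the paper inverts the gradient map $\partial F/\partial t$ through the Legendre transform of $G$; your version is, if anything, slightly more careful about where strict convexity and lower-boundedness of $V$ near $s=0$ are actually used.
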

\begin{proof}
Let $s_i=r_i^2$ . The minimum of 
$$
\sum_{i=1}^n \frac{\alpha_i^2}{s_i}+V(s_1,\dots,s_n)
$$
occurs at a point where we have
$$
\frac{\partial V}{\partial s_i}=\frac{\alpha_i^2}{s_i^2}\text{\ \ or equivalently, \ \ } s_i^2\frac{\partial V}{\partial s_i}=\alpha_i^2,\text{\, for all\ } i=1,\dots, n.
$$
Set {$t_i=\frac{1}{s_i}$} and let
$$
F({t_1, \dots, t_n})=-V\left(\frac{1}{{t_1}}, \dots,\frac{1}{{t_n}}\right).
$$
Note that $m(\alpha_1,\dots,\alpha_n)$ is the minimum value of $\sum_{i=1}^n \alpha_i^2 t_i - F(t_1, \dots, t_n)$.  For any $i=1,\dots, n$, at a point where the minimum is achieved we have
$$
\frac{\partial F}{\partial {t}_i}=-\frac{\partial V}{\partial s_i}\frac{\partial s_i}{\partial {t}_i}=\frac{1}{({t}_i)^2}\frac{\partial V}{\partial s_i}=s_i^2\frac{\partial V}{\partial s_i}=\alpha_i^2.
$$
Moreover,
$$
\frac{\partial^2 F}{\partial t_i\partial t_j}=-\frac{\partial^2 V}{\partial s_i\partial s_j}\frac{1}{t_i^2t_j^2}-2\delta_{ij}\frac{\partial V}{\partial s_i}\frac{1}{t_i^3}.
$$

If $V$ is admissible, condition \eqref{admissible1} ensures that the Legendre transform 
$$
\frac{\partial F}{\partial {t}}:\bbR_+^n \rightarrow  \bbR^n
$$
maps $\bbR_+^n$ into $\bbR_+^n$. {By \eqref{admissible2}, $-F$ is strictly convex and therefore $\frac{\partial F}{\partial t}$ is a diffeomorphism onto an open subset of $\bbR_+^n$ (see \cite[\S A1.3]{GuMM}). We claim that this open set is, in fact, all of $\bbR_+^n$.  To see this, consider
$$
\sum_{i=1}^n\alpha_i^2t_i-F(t)=V\left(\frac{1}{t_1}, \dots,\frac{1}{t_n}\right)+\sum_{i=1}^n\alpha_i^2t_i.
$$
The expression on the right tends to infinity as $t$ tends to infinity. The properness of $V$ implies that this expression also tends to infinity as $t$ approaches the boundary of $\bbR_+^n$. The left side therefore must have a minimum value at some point $t_0$, and 
$$
\frac{\partial F}{\partial t}(t_0)=(\alpha_1^2,\dots, \alpha_n^2).
$$

Now let $G: \bbR^n_+ \rightarrow \bbR$ be the function $G(a)=t\cdot a -F(t)$ at $a=\frac{\partial F}{\partial t}({t})$. Thus {for $a_i=\alpha_i^2$, $i=1,\dots, n$, we have $G(a)=m(\alpha)$}. Then $G(a)$ is espectrally determined and hence so is its Legendre transform $\frac{\partial G}{\partial a}$. But  $\frac{\partial G}{\partial a}$ is the inverse of $\frac{\partial F}{\partial t}$ at $t=\frac{\partial G}{\partial a}(a)$, so $F$ is espectrally determined. Given the relationship between $F$ and $V$, this shows that $V$ is espectrally determined.}
\end{proof}

\subsection{Laplace operators on toric orbifolds}
\subsubsection{Background on toric orbifolds}
We briefly recall some basic facts about toric orbifolds and K\"ahler toric metrics on them. For more details, see \cite{a2}. 
\begin{defn}
Let $(X,\omega)$ be a compact connected symplectic orbifold of real dimension $2n$. Then $(X,\omega)$ is said to be \emph{toric} if it admits an effective Hamiltonian $\bbT^n$ action, where $\bbT^n$ is the real torus of dimension $n$.
\end{defn}

{The existence of an effective Hamiltonian action implies that there is a \emph{moment map} from $X$ to $\bbR^n$. It follows from} the convexity theorem of Atiyah and Guillemin and Sternberg that the image of the moment map of a toric orbifold is convex; it is the convex hull of the fixed points of the $\bbT^n$ action. In fact this convex hull is always a  \emph{rational simple} polytope.
\begin{defn}\label{defn:rat_simple}
A convex polytope $P$ in $\mathbb{R}^n$ is \emph{rational simple} if
\begin{enumerate}
\item there are $n$ facets meeting at each vertex;
\item for every facet of $P$, a primitive outward normal can be chosen in $\bbZ^n$;
\item for every vertex of $P$, the outward normals corresponding to the facets meeting at that vertex form a basis for $\bbQ^n$.
\end{enumerate}
Note that a facet is a face in $P$ of codimension $1$.
\end{defn}
Given a symplectic toric orbifold $(X,\omega)$ whose moment map image is a rational simple polytope $P$, the action of $\bbT^n$ is free exactly on the pre-image via the moment map of the interior of $P$. The pre-image via the moment map of {$\interior(P)$} is an open dense subset of $X$, and $X$ admits so-called \emph{action-angle coordinates} on that open set. 
\begin{defn}
Let $(X,\omega)$ be a toric symplectic orbifold. On the pre-image of $\interior(P)$, we define \emph{action-angle coordinates} on $X$ by 
$$(x_1,\dots,x_n,\theta_1, \dots, \theta_n),$$ where $(x_1,\dots,x_n)$ are the coordinates of the moment map and have image in $P$ and  $(\theta_1, \dots, \theta_n)$ are angle coordinates on the torus $\bbT^n$.
\end{defn} 
\noindent Note that if $(x_1,\dots,x_n,\theta_1, \dots, \theta_n)$ are action-angle coordinates on $(X,\omega)$, then
$$
\omega=\sum_{i=1}^{n} dx_i\wedge d\theta_i.
$$

Lerman and Tolman \cite{lt} showed that for every facet of $P$, there is an integer $m$ such that the structure group of any point in the pre-image of the facet via the moment map has structure group $\bbZ/m\bbZ$. The collection of integers attached to the facets are called the \emph{labels} of the moment polytope. 
\begin{defn}
A  \emph{labeled} polytope is a rational simple polytope with an integer associated to each facet.
\end{defn}
Given a labeled polytope $P$ with facets $F_1, \dots, F_d$, we will denote by ${\n}^0_i$ the unique inward-pointing normal vector to $F_i$ which is a primitive element of the lattice $\bbZ^n\subset \bbR^n$. It is always possible to characterize a rational simple polytope $P$ as
$$
P=\{x\in \bbR^n: x\cdot m_i\n^0_i \geq\lambda_i,\, i=1,\dots, d\},
$$
where $m_i$ is the integer label of the facet $F_i$ and $\lambda_i$ is a constant. We set $\n_i=m_i\n^0_i$ for $i=1,\dots,d$, and
$$
l_i(x)=x\cdot \n_i -\lambda_i, \, i=1,\dots, d.
$$
Lerman and Tolman generalized the well-known classification theorem of Delzant for toric manifolds to the orbifold setting.
\begin{thm}\cite{lt}\label{lt}
To every toric symplectic orbifold we can associate a labeled polytope. Conversely, given any labeled polytope there is a toric symplectic orbifold associated to it. Two toric symplectic orbifolds are equivariantly symplectomorphic if and only if their labeled polytopes are isomorphic.
\end{thm}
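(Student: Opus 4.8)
The plan is to reproduce, in the orbifold category, the three-part argument behind Delzant's classification of toric symplectic manifolds; the orbifold refinement is due to Lerman and Tolman \cite{lt}, and for the local models used below one may consult \cite{a2,ggk}. \emph{Orbifold $\Rightarrow$ labeled polytope.} Given a toric symplectic orbifold $(X,\omega)$ with Hamiltonian $\bbT^n$-action and moment map $\mu\colon X\to\bbR^n$, the convexity theorem of Atiyah, Guillemin, and Sternberg---valid for orbifolds---shows that $P:=\mu(X)$ is a convex polytope, namely the convex hull of the finitely many images of the connected components of the fixed-point set. To see that $P$ is rational simple and to read off the labels one works locally: by the equivariant Darboux theorem and the slice theorem, a neighborhood of any point of $X$ is modeled on $\bbC^n/\Gamma$ carrying a linear $\bbT^n$-action with the standard quadratic moment map, so the image of $\mu$ near the corresponding face is a rational simplicial cone. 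Simplicity of $P$ at each vertex follows, the primitive inward normals $\eta_i^0\in\bbZ^n$ are well defined, and along the relative interior of a facet $F_i$ the isotropy group is a constant cyclic group $\bbZ/m_i\bbZ$; the integer $m_i$ is the label of $F_i$. The only ambiguity is the choice of identification $\bbT^n\cong(\bbR/\bbZ)^n$ and the additive normalization of $\mu$, i.e.\ an element of $GL(n,\bbZ)\ltimes\bbR^n$, which is exactly an isomorphism of labeled polytopes.

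\emph{Labeled polytope $\Rightarrow$ orbifold.} Write $P=\{x\in\bbR^n : \langle x,\eta_i\rangle\ge\lambda_i,\ i=1,\dots,d\}$ with $\eta_i=m_i\eta_i^0$, and let $\beta\colon\bbR^d\to\bbR^n$ be the linear surjection sending the $i$-th coordinate vector to $\eta_i$. Let $K\subset\bbT^d$ be the subgroup whose Lie algebra is $\ker\beta$; it need not be connected when some $m_i>1$. Restricting the standard Hamiltonian $\bbT^d$-action on $(\bbC^d,\omega_{\mathrm{std}})$ to $K$, the constants $\lambda_i$ single out a value $c$ of the $K$-moment map, and one sets $X_P:=\bbC^d/\!/_c K$, a symplectic orbifold carrying a residual Hamiltonian action of $\bbT^d/K\cong\bbT^n$ whose moment map image is $P$. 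The crucial difference from the manifold case is that at each vertex of $P$ the corresponding $n$ vectors $\eta_i$ span $\bbR^n$ only over $\bbQ$ rather than generating $\bbZ^n$, so $K$ acts on the preimage of the level set only locally freely; consequently $X_P$ is a genuine orbifold and the order of its isotropy along $F_i$ is exactly $m_i$.

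\emph{Uniqueness.} It remains to show that any toric symplectic orbifold with labeled polytope $P$ is equivariantly symplectomorphic to $X_P$. On the preimage of $\interior(P)$ the action is free and one has action--angle coordinates in which $\omega=\sum_i dx_i\wedge d\theta_i$, so any two such orbifolds are canonically identified there. A Moser-type deformation argument, combined with the local normal form near the preimages of the lower-dimensional faces---a normal form that is itself pinned down by the polytope and the labels---extends this identification across the singular strata in a $\bbT^n$-equivariant way. I expect the main obstacle to be precisely this last step: one must verify that the local models attached to the various faces glue compatibly and that the required isotopy can be taken equivariant and supported near each stratum, which is where the rational simplicity of $P$ and the constancy of the isotropy along facets are indispensable.
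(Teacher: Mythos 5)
This statement is quoted from Lerman and Tolman (\cite{lt}) and is not proved in the paper, so there is no internal argument to compare against; I will assess your sketch against the known proof. Your two existence directions are the standard ones and are essentially right: the convexity theorem plus the equivariant local normal form gives the labeled polytope (with the $GL(n,\bbZ)\ltimes\bbR^n$ ambiguity correctly identified as an isomorphism of labeled polytopes), and the Delzant-type reduction $\bbC^d/\!/_c K$, with $K$ the possibly disconnected kernel of the induced homomorphism $\bbT^d\to\bbT^n$, produces a symplectic toric orbifold whose facet labels are exactly the $m_i$.

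The genuine gap is in the uniqueness step, and you have flagged it yourself without closing it. A global Moser isotopy anchored on action--angle coordinates over $\interior(P)$ does not obviously extend across the singular strata: the identification over $\mu^{-1}(\interior(P))$ is canonical only up to a large group of fiber-preserving equivariant symplectomorphisms, and one must choose it compatibly with the local models at all faces simultaneously. The way Lerman and Tolman actually handle this is to prove a \emph{local} uniqueness statement --- over a sufficiently small neighborhood of each point of $P$, any two symplectic toric orbifolds with the given labels are equivariantly symplectomorphic by a map covering the identity on the polytope --- and then to patch these local isomorphisms by a \v{C}ech argument. The obstruction to patching lies in the first \v{C}ech cohomology of $P$ with coefficients in the sheaf of fiber-preserving equivariant automorphisms of the local model (essentially locally constant integral data together with exact one-forms on the base), and this cohomology vanishes because $P$ is convex, hence contractible. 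Some argument of this type --- local uniqueness plus a vanishing cohomological obstruction --- is needed to replace your ``glue compatibly'' step; as written, the proof is incomplete precisely there.
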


It turns out that toric symplectic manifolds always have a K\"ahler metric (see \cite{g}). The same is true for toric symplectic orbifolds. Given a labeled polytope P we associate to it the following function on $\interior(P)$:
$$
\s_P(x)=\sum_{i=1}^d l_i(x)\log(l_i(x))-l_i(x).
$$
This function is used in Abreu's description of all K\"{a}hler metrics on a toric symplectic orbifold.
\begin{thm}\cite{a3}
Given a labeled polytope $P$, K\"ahler metrics on the corresponding toric orbifold $X_P$ are in one-to-one correspondence with potentials $\s:\interior(P)\rightarrow \bbR$ satisfying
\begin{itemize}
\item the Hessian $\hess(\s)$ is positive definite on $P$;
\item the function $\s-\s_P$ extends smoothly to a neighborhood of the boundary of $P$; and
\item $\det(\hess(\s))\prod_{i=1}^d l_i$ extends smoothly to a neighborhood of the boundary of $P$. 
\end{itemize}
Given such a potential, the Riemannian metric associated to the K\"ahler structure can be given explicitly in action-angle coordinates over the pre-image via the moment map of $P$. More precisely, the metric is 
\begin{equation} \label{toric_metric_matrix}
\begin{bmatrix}
\phantom{-}\hess(\s) & \vdots & 0\  \\
\hdotsfor{3} \\
\phantom{-}0 & \vdots & \hess^{-1}(\s)
\end{bmatrix}
\end{equation}
\end{thm}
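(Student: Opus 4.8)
The plan is to reduce the statement to the analogous fact on the open dense stratum $X^{\circ}$, the pre-image via the moment map of $\interior(P)$, and then control the behavior near the singular strata by comparison with the model potential $\s_P$. On $X^{\circ}$ the action-angle coordinates $(x,\theta)$ introduced above exist and $\omega=\sum_i dx_i\wedge d\theta_i$. First I would observe that a $\bbT^n$-invariant, $\omega$-compatible almost complex structure $J$ on $X^{\circ}$ is determined by a smooth function $G$ on $\interior(P)$ with values in positive-definite symmetric $n\times n$ matrices: requiring $\omega(\cdot,J\cdot)$ to be a (symmetric, positive-definite) Riemannian metric, together with $\bbT^n$-invariance, forces $J\partial_{x_i}=\sum_j G_{ij}\partial_{\theta_j}$ and $J\partial_{\theta_i}=-\sum_j(G^{-1})_{ij}\partial_{x_j}$, so that the associated metric is block-diagonal $\operatorname{diag}(G,G^{-1})$ in the frame $(\partial_{x_i},\partial_{\theta_i})$. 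The key point is that integrability of $J$ is equivalent to $G$ being a Hessian: a spanning set of $(1,0)$-forms is given by $\sum_j G_{ij}\,dx_j+\sqrt{-1}\,d\theta_i$ for $i=1,\dots,n$, and since $G$ depends only on $x$, closedness of these forms is equivalent to the symmetry $\partial_{x_m}G_{ij}=\partial_{x_j}G_{im}$, i.e.\ to $G=\hess(\s)$ for a function $\s$ on the convex set $\interior(P)$ (Poincar\'e lemma); conversely, such a $\s$ yields an integrable $J$ via the logarithmic holomorphic coordinates $z=\nabla\s(x)+\sqrt{-1}\,\theta$. Positive-definiteness of $\hess(\s)$ is then exactly strict convexity of $\s$ -- the first bullet -- and in these terms the metric is precisely \eqref{toric_metric_matrix}.

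Next I would analyze when such a structure on $X^{\circ}$ extends to a global K\"ahler orbifold metric on $X_P$. Fix a facet $F_i$ with label $m_i$ and a point in its relative interior; by the local normal form underlying Theorem \ref{lt}, a neighborhood of the corresponding divisor is uniformized by $\bbC\times(\bbC^{*})^{n-1}$ modulo $\bbZ/m_i\bbZ$, and on it the standard flat / Fubini--Study-type K\"ahler structure has symplectic potential equal to $\s_P$ modulo a function smooth near $F_i$ (this is Guillemin's computation \cite{g}, which carries over to the orbifold setting because passing to the uniformizing chart only rescales the normal moment coordinate $l_i$ by $m_i$). In affine coordinates adapted to $F_i$, with $l_i$ the normal coordinate, $\hess(\s_P)$ has a single entry blowing up like $1/l_i$ as $l_i\to 0$, so $\det(\hess(\s_P))$ equals a smooth nowhere-vanishing function times $(\prod_j l_j)^{-1}$. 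Hence a $\bbT^n$-invariant K\"ahler structure on $X^{\circ}$ extends smoothly across the divisor over $F_i$, as an orbifold K\"ahler metric, if and only if (a) its complex structure agrees with the model's up to a smooth change, which amounts to $\s-\s_P$ extending smoothly to a neighborhood of $F_i$, and (b) the metric tensor -- in particular the $d\theta$-block $(\hess(\s))^{-1}$, which must vanish to exactly first order in $l_i$ along the shrinking orbit direction -- is smooth, which amounts to $\det(\hess(\s))\prod_j l_j$ extending smoothly across the boundary. Running this simultaneously over all facets, and checking compatibility over the lower-dimensional faces where several $l_i$ vanish at once, yields the three bullet conditions as necessary and sufficient for a global K\"ahler orbifold structure with symplectic form $\omega$.

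For the converse, given any $\s$ with the three properties one defines the metric by \eqref{toric_metric_matrix} and $J$ by the logarithmic coordinates above on $X^{\circ}$, and the boundary analysis of the previous paragraph, read in reverse, shows that the pair extends to a genuine K\"ahler orbifold metric on $X_P$ with K\"ahler form $\omega$; the potential $\s$ is recovered from $J$ on $X^{\circ}$ only up to an affine function, which does not change the metric, so the correspondence is well defined. I expect the main obstacle to be step (b): assembling the local orbifold models near the vertices and lower-dimensional faces into one coherent global picture, and extracting from the single scalar condition on $\det(\hess(\s))\prod_j l_j$ the smoothness of the full metric tensor transverse to every stratum. This is the technical core of the result, and rather than reprove it from scratch I would follow Abreu \cite{a3}, building on Guillemin \cite{g}.
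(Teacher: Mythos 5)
The paper does not prove this statement: it is quoted verbatim from Abreu \cite{a3} (building on Guillemin \cite{g}), so there is no in-paper argument to compare yours against. Your sketch follows exactly the route of the cited source — action-angle coordinates on the free stratum, integrability forcing the matrix $G$ to be a Hessian, Legendre/logarithmic coordinates $z=\nabla\s(x)+\sqrt{-1}\,\theta$ for the converse, and boundary control by comparison with the Guillemin potential $\s_P$ — and you correctly identify the compactification analysis over the faces as the technical core that one would in practice import from \cite{a3}.

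One step as written is not correct and should be repaired. You claim that $\bbT^n$-invariance together with compatibility with $\omega$ already forces $J\partial_{x_i}=\sum_j G_{ij}\partial_{\theta_j}$, $J\partial_{\theta_i}=-\sum_j (G^{-1})_{ij}\partial_{x_j}$. It does not: at each point the compatible complex structures on $(\bbR^{2n},\sum dx_i\wedge d\theta_i)$ form the Siegel upper half-space, of real dimension $n(n+1)$, whereas the block anti-diagonal ones you describe form only the $\tfrac{n(n+1)}{2}$-dimensional "imaginary axis'' of that space; writing $J=\bigl(\begin{smallmatrix} A & B\\ C& D\end{smallmatrix}\bigr)$ in the frame $(\partial_x,\partial_\theta)$, symmetry and positivity of $\omega(\cdot,J\cdot)$ give $D=-A^t$, $C=C^t>0$, $B=B^t<0$, but do not force $A=0$. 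What is true is that \emph{integrability} of the invariant $J$ both makes the residual symmetric block a Hessian and allows one to gauge away $A$ by a fibrewise shift of the angle coordinates $\theta\mapsto\theta+\nabla\psi(x)$ (which preserves $\omega$ and the action); equivalently, the block form holds in \emph{suitable} action-angle coordinates, which is how Abreu states it. Since the correspondence in the theorem is anyway only one-to-one up to the affine ambiguity in $\s$, this is a fixable normalization issue rather than a fatal flaw, but as written your first paragraph derives the block form from the wrong hypotheses. The rest of the sketch — the equivalence of $d\bigl(\sum_j G_{ij}dx_j+\sqrt{-1}\,d\theta_i\bigr)=0$ with $\partial_{x_m}G_{ij}=\partial_{x_j}G_{im}$ and hence with $G=\hess(\s)$ on the convex set $\interior(P)$, and the boundary conditions via $\s_P$ and $\det(\hess\s)\prod_i l_i$ — is consistent with the cited proof.
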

The function $\s$ is called the \emph{symplectic potential} for the K\"ahler metric.

\begin{lemma}\label{boundary_behavior}
The matrix $\hess^{-1}(\s)$ has the following properties:
\begin{enumerate}
\item $\hess^{-1}(\s)$ extends smoothly to a neighborhood of the boundary of $P$; 
\item $\hess^{-1}(\s)\n_i=0$ on $F_i$, for $i=1, \dots, d$; and
\item on $F_i$, we have
$$
{\frac{\n_i^t\hess^{-1}(\s)\n_i}{l_i}}=1,
$$
for $i=1,\dots, d$.
\end{enumerate}
\end{lemma}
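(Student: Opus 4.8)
The plan is to reduce everything to the explicit structure of $\hess(\s)$ near the facets of $P$ and then invert by a rank-one perturbation argument. First I would compute the Hessian of Guillemin's canonical potential: since $\partial_j l_i=(\n_i)_j$, a one-line calculation gives
\[
\hess(\s_P)=\sum_{i=1}^{d}\frac{\n_i\n_i^{t}}{l_i}.
\]
By the theorem of Abreu quoted above, $\s-\s_P$ extends smoothly to a neighborhood of the boundary of $P$, so $\hess(\s)=\sum_{i=1}^{d}\frac{\n_i\n_i^{t}}{l_i}+S$ with $S$ smooth up to the boundary. Fix a facet $F_1$ and work on a neighborhood $U$ of a point of the relative interior of $F_1$ small enough that $l_j>0$ on $U$ for all $j\ge 2$. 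On $U$ we then have $\hess(\s)=\frac{\n_1\n_1^{t}}{l_1}+R$, where $R:=\sum_{j\ge 2}\frac{\n_j\n_j^{t}}{l_j}+S$ is smooth on all of $U$, including on $F_1\cap U$.

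The main obstacle is to show that $R$ is invertible up to and including $F_1\cap U$, with $\n_1^{t}R^{-1}\n_1>0$ there. For a vector $w$ with $\n_1^{t}w=0$ one has $w^{t}\hess(\s)w=w^{t}Rw$, so $R$ restricted to $\ker\n_1^{t}$ is positive definite on $\interior(P)$ and hence positive semidefinite in the limit $l_1\to 0$. To rule out a rank drop I would invoke the third of Abreu's conditions, namely that $\det(\hess(\s))\prod_i l_i$ extends to a (positive) smooth function near the boundary: combining this with $\prod_{j\ge2}l_j>0$ on $U$ shows that $l_1\det(\hess(\s))$ is smooth and positive on $F_1\cap U$, and the identity
\[
l_1\det(\hess(\s))=l_1\det(R)+\n_1^{t}\operatorname{adj}(R)\,\n_1
\]
then forces $\n_1^{t}\operatorname{adj}(R)\,\n_1>0$ on $F_1\cap U$, which is incompatible with $R$ (equivalently $R|_{\ker\n_1^{t}}$) being singular there. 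Thus $R$ is invertible near $F_1$ and $\n_1^{t}R^{-1}\n_1>0$ on $F_1\cap U$.

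Granting this, the rank-one (Sherman--Morrison) inversion identity gives, on $U\cap\interior(P)$,
\[
\hess^{-1}(\s)=R^{-1}-\frac{R^{-1}\n_1\n_1^{t}R^{-1}}{\,l_1+\n_1^{t}R^{-1}\n_1\,}.
\]
The right-hand side is smooth on $U$ because $R$ is invertible there and the denominator equals $\n_1^{t}R^{-1}\n_1>0$ on $F_1\cap U$; this proves (1) near the relative interiors of the facets. From the same formula, $\hess^{-1}(\s)\n_1=\dfrac{l_1}{\,l_1+\n_1^{t}R^{-1}\n_1\,}\,R^{-1}\n_1$, which vanishes where $l_1=0$, giving (2); and $\n_1^{t}\hess^{-1}(\s)\n_1=\dfrac{l_1\,\n_1^{t}R^{-1}\n_1}{\,l_1+\n_1^{t}R^{-1}\n_1\,}$, so $\dfrac{\n_1^{t}\hess^{-1}(\s)\n_1}{l_1}=\dfrac{\n_1^{t}R^{-1}\n_1}{\,l_1+\n_1^{t}R^{-1}\n_1\,}\to 1$ as $l_1\to0$, giving (3).

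Finally, to promote (1) from the relative interiors of the facets to a full neighborhood of the boundary of $P$, I would repeat the argument near a face $F_{i_1}\cap\cdots\cap F_{i_r}$ of codimension $r$: write $\hess(\s)=N D^{-1}N^{t}+R$ with $N=[\n_{i_1}\,|\cdots|\,\n_{i_r}]$, $D=\operatorname{diag}(l_{i_1},\dots,l_{i_r})$, and $R$ smooth, and use the Woodbury identity $\hess^{-1}(\s)=R^{-1}-R^{-1}N(D+N^{t}R^{-1}N)^{-1}N^{t}R^{-1}$; here the hypothesis that $P$ is simple (so the $\n_{i_k}$ are linearly independent, and form a basis at a vertex) is precisely what keeps $D+N^{t}R^{-1}N$ invertible up to that face, again using the positivity built into Abreu's conditions.
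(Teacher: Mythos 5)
Your route is genuinely different from the paper's: the paper takes (1) and (2) as known from Abreu's work and proves only (3), first for the canonical potential $\s_P$ by an explicit Cramer's-rule computation of $\hess^{-1}(\s_P)\n_1$, and then for general $\s$ by writing $\hess(\s)=\hess(\s_P)+G$ and expanding $\hess^{-1}(\s)$ in a Neumann series whose higher terms vanish on $F_1$ by property (2). Your rank-one splitting $\hess(\s)=\n_1\n_1^t/l_1+R$ plus Sherman--Morrison is cleaner where it applies, and it has the virtue of deriving all three properties from one formula; the identity $l_1\det(\hess(\s))=l_1\det(R)+\n_1^t\operatorname{adj}(R)\,\n_1$ and the final three computations are all correct.

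There is, however, a genuine gap in the step where you conclude that $R$ is invertible on $F_1\cap U$. Since $\ker\n_1^t=\n_1^{\perp}$, one has $\n_1^t\operatorname{adj}(R)\,\n_1=|\n_1|^2\det\bigl(R|_{\n_1^{\perp}}\bigr)$ (compute in an orthonormal basis whose first vector is $\n_1/|\n_1|$), so what your positivity argument actually establishes is that the restriction $R|_{\n_1^{\perp}}$ is nondegenerate. That does \emph{not} imply $R$ itself is invertible: for $R=\operatorname{diag}(0,1)$ and $\n_1=e_1$ one has $R|_{\n_1^{\perp}}=1>0$ and $\n_1^t\operatorname{adj}(R)\,\n_1=1>0$, yet $R$ is singular. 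Nothing in Abreu's conditions rules this degeneration out, because the $(\n_1,\n_1)$ entry of $R$ is swamped by the $|\n_1|^2/l_1$ term and leaves no trace in either the positivity of $\hess(\s)$ or the behavior of $\det(\hess(\s))\prod_i l_i$; so the parenthetical ``equivalently $R|_{\ker\n_1^t}$'' is where the argument breaks, and the Sherman--Morrison formula, which divides by $R^{-1}$, may fail to make sense on $F_1$. The fix is routine: block-decompose $\hess(\s)$ with respect to $\n_1^{\perp}\oplus\bbR\n_1$ and invert via the Schur complement of the $(\n_1,\n_1)$ entry, which equals $|\n_1|^2/l_1+O(1)$; this requires only the invertibility of $R|_{\n_1^{\perp}}$, which you did prove, and yields the same three conclusions. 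The same correction is needed in your codimension-$r$ Woodbury step. A minor further point: you invoke positivity (not just smoothness) of the extension of $\det(\hess(\s))\prod_i l_i$; this is indeed part of the standard boundary conditions, but it is slightly more than the literal statement quoted in the paper, so it deserves a citation.
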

\begin{proof}
The first two statements are well known (e.g., \cite{a3}). We will prove the third statement. In fact, a slight modification of this proof would also show the first two statements in the lemma.

We will assume, without loss of generality, that $i=1$. We begin with the case $\s=\s_P$.  
The $(a,b)$-entry of the matrix $\hess(\s_P)$ is given by 
$$
(\hess(\s_P))_{ab}=\sum_{i=1}^d\frac{\n_i^a \n_i^b}{l_i},
$$
{where $\n_i^k$ denotes the $k$th component of the vector $\n_i$.}
By using Cramer's rule we see that for $s=1,\dots,{n}$, the $s$ component of $\hess^{-1}(\s_P)\n_1$ is given by
$$
\frac{\det \left(\sum_{i=1}^d\frac{\n_i^1 \n_i}{l_i},\dots,\sum_{i=1}^d\frac{ \n_i^{s-1}  \n_i}{l_i},\n_1,\sum_{i=1}^d\frac{ \n_i^{s+1}  \n_i}{l_i},\dots,\sum_{i=1}^d\frac{ \n_i^n  \n_i}{l_i}\right)}{\det \left(\sum_{i=1}^d\frac{ \n_i^1  \n_i}{l_i},\dots,\sum_{i=1}^d\frac{ \n_i^n  \n_i}{l_i}\right)}.
$$
Expanding this out we see that $(\hess^{-1}(\s_p)\n_1)^s$ is given by
$$
\frac{\displaystyle\sum_{\substack{(i_1,\dots, i_{s-1},i_{s+1},\dots, i_n) \\ \text{all distinct},\ne 1}}\frac{\n_{i_1}^1\dots  \n_{i_{s-1}}^{s-1} \n_{i_{s+1}}^{s+1}\dots  \n_{i_{n}}^{n}}{l_{i_1}\dots l_{i_{s-1}}l_{i_{s+1}}\dots l_{i_n}}\det(\n_{i_1},\dots,\n_{i_{s-1}},\n_1,\n_{i_{s+1}},\dots, \n_{i_n})}{\displaystyle\sum_{\substack{(i_1,\dots, i_n)\\ \text{all distinct}}}\frac{\n_{i_1}^1\dots \n_{i_n}^n}{l_{i_1}\dots l_{i_{n}}} \det(\n_{i_1},\dots, \n_{i_n})}.
$$
This implies that $\n_1^t\hess^{-1}(\s_p)\n_1$ is given by
$$
\frac{\displaystyle\sum_{s=1}^n\displaystyle\sum_{\substack{(i_1,\dots, i_{s-1},i_{s+1},\dots, i_n) \\ \text{all distinct},\ne 1}}\frac{\n_{i_1}^1\dots \n_{i_{s-1}}^{s-1}\n_1^s\n_{i_{s+1}}^{s+1}\dots \n_{i_{n}}^{n}}{l_{i_1}\dots l_{i_{s-1}}l_{i_{s+1}}\dots l_{i_n}}\det(\n_{i_1},\dots,\n_{i_{s-1}},\n_1,\n_{i_{s+1}},\dots, \n_{i_n})}{\displaystyle\sum_{\substack{(i_1,\dots, i_n)\\ \text{all distinct}}}\frac{\n_{i_1}^1\dots \n_{i_n}^n}{l_{i_1}\dots l_{i_{n}}} \det(\n_{i_1},\dots, \n_{i_n})};
$$
we will denote the numerator of this expression by $\num$.
The denominator can be written as the sum of two terms, namely
$$
\displaystyle\sum_{s=1}^n\displaystyle\sum_{\substack{(i_1,\dots,i_{s-1},i_{s+1},\dots, i_n)\\ \text{all distinct}{, \ne 1}}}\frac{\n_{i_1}^1\dots \n_{i_{s-1}}^{s-1}\n_1^s \n_{i_{s+1}}^{s+1} \n_{i_n}^n}{l_{i_1}\dots l_{i_{s-1}}l_1l_{i_{s+1}}\dots l_{i_n}} \det(\n_{i_1},\dots,\n_{i_{s-1}},\n_1,\n_{i_{s+1}},\dots, \n_{i_n}),
$$
which equals $\num/l_1$, and
$$
\mathcal{M}:= \displaystyle\sum_{\substack{(i_1,\dots, i_n)\\ \text{all distinct}\\ \ne 1}}\frac{\n_{i_1}^1\dots \n_{i_n}^n}{l_{i_1}\dots l_{i_{n}}} \det(\n_{i_1},\dots, \n_{i_n}).
$$
Therefore $\n_1^t\hess^{-1}(\s_p)\n_1=
{\frac{l_1}{1+\frac{l_1\mathcal{M}}{\num}}}.$
On $F_1$,
$$\frac{\n_1^t\hess^{-1}(\s_p)\n_1}{l_1}=1.$$

Next we examine the general case
$\hess(\s)=\hess(\s_P)+G$,
where $G$ is smooth in a neighborhood of the closure of $P$. We can write this as 
$$
\hess(\s)=\hess(\s_P)(1+\hess^{-1}(\s_P)G)
$$
so that
$$
{\hess^{-1}(\s)=(1+\hess^{-1}(\s_P)G)^{-1}\hess^{-1}(\s_P)=\left(\sum_{k=0}^\infty (-1)^k(\hess^{-1}(\s_P)G)^k\right)\hess^{-1}(\s_P)}
$$
when the sum is convergent.  Thus
$$
\frac{\n_1^t\hess^{-1}(\s)\n_1}{l_1}=\frac{\n_1^t\hess^{-1}(\s_P)\n_1}{l_1}+\left(\sum_{k=1}^\infty {(-1)^k \n_1^t (}\hess^{-1}(\s_P)G)^k\right)\frac{\hess^{-1}(\s_P)\n_1}{l_1}.
$$
We have seen that $\frac{\hess^{-1}(\s_P)\n_1}{l_1}$ is smooth. We restrict our attention to $F_1$.  By property (ii), ${\hess^{-1}(\s_P)\n_1}=0$, implying that for any $k\geq 1$ we have 
$$
{(-1)^k}\n_1^t(\hess^{-1}(\s_P)G)^k={(-1)^k}\n_1^t(\hess^{-1}(\s_P)G)(\hess^{-1}(\s_P)G)^{k-1}=0.
$$
It follows that on $F_1$,
$$
\frac{\n_1^t\hess^{-1}(\s)\n_1}{l_1}=\frac{\n_1^t\hess^{-1}(\s_P)\n_1}{l_1}=1.
$$
\end{proof}

The leading term in the asymptotic expansion of $\mu_{\frac{\alpha}{h}}$ can be made very explicit in the case of a toric symplectic orbifold. {Note that contributions to the leading term in Theorem \ref{mainthm} come only from points where the action is free; as mentioned above, the action of $\bbT^n$ is free exactly on the pre-image via the moment map of the interior of the rational simple polytope $P$ corresponding to the toric orbifold.  Every point in the pre-image of $\operatorname{int}(P)$ is regular, so the coordinate-based arguments used in the proof of Theorem \ref{mainthm} apply directly to this case.  In particular, because} we have action-angle coordinates on a dense subset of the toric orbifold, we can essentially treat the toric orbifold as $\bbR^{2n}$. The following lemma can be viewed as a toric version of Theorem \ref{leading_thm}.
\begin{lemma}\label{toric_laplacian1_lemma}
Let $X$ be a toric orbifold endowed with a toric K\"ahler metric with symplectic potential $\s$, and let $\op_h=h^2\Lap$ be the semiclassical Laplace operator associated to this metric. Given a generic $\alpha\in \bbR^n$ and a compactly supported function $\rho \in C_0^{\infty}(\bbR)$, the quantity
$$
\int_{P\times \bbR^n}\rho(\alpha^t\hess \s(x) \alpha +u^t \hess^{-1} \s (x)u)dx_1\wedge\cdots\wedge dx_n\wedge du_1\wedge\cdots\wedge du_n
$$
is espectrally determined.
\end{lemma}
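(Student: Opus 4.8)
The plan is to invoke Theorem~\ref{leading_thm} and to make its two ingredients, the reduced symbol $\p_\alpha$ and the reduced volume form $\vol_\alpha$ on $T^*(X_0/\bbT^n)$, completely explicit. As recorded in the discussion preceding the lemma, the leading term of Theorem~\ref{mainthm} is supported over the free locus of the $\bbT^n$-action, which is the preimage of $\interior(P)$; via the moment-map coordinates $(x_1,\dots,x_n)$ we identify the quotient of this locus with $\interior(P)$ itself, and over it the metric has the block form \eqref{toric_metric_matrix}, namely $\operatorname{diag}(\hess\s,\hess^{-1}\s)$ in the frame $(\partial_{x_1},\dots,\partial_{x_n},\partial_{\theta_1},\dots,\partial_{\theta_n})$. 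Over this dense open set the toric orbifold behaves exactly as in Lemma~\ref{schrodinger1_lemma}.

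First I would compute the moment map of the lifted action and the reduced symbol. In the cotangent coordinates $(x,\theta,u,v)$ dual to $(x,\theta)$, the vector field generated by $\xi\in\bbR^n$ is $V_\xi=\sum_i\xi_i\partial_{\theta_i}$, so $\Phi(x,\theta,u,v)=v$ and $\Phi^{-1}(\alpha)=\{v=\alpha\}$. The inverse of \eqref{toric_metric_matrix} is the block-diagonal matrix $\operatorname{diag}(\hess^{-1}\s,\hess\s)$, so the leading symbol of $h^2\Lap$ is $\fp_0(x,\theta,u,v)=u^t\hess^{-1}\s(x)u+v^t\hess\s(x)v$. Restricting to $\{v=\alpha\}$ and passing to the $\bbT^n$-quotient, which is translation in $\theta$ and identifies $(T^*X)_\alpha$ with $T^*(\interior P)$ in the coordinates $(x,u)$, we obtain
\[
\p_\alpha(x,u)=\alpha^t\hess\s(x)\alpha+u^t\hess^{-1}\s(x)u .
\]

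Next I would identify $\vol_\alpha$, arguing exactly as in Lemma~\ref{schrodinger1_lemma}. The $1$-form $\alpha^\sharp$ on the free locus is characterized by $\alpha^\sharp(V_\xi)=\langle\alpha,\xi\rangle$ and by vanishing on the metric-orthogonal complement of the orbit directions; by the block form of the metric that complement is spanned by the $\partial_{x_i}$, so $\alpha^\sharp=\sum_i\alpha_i\,d\theta_i$, which is closed. Hence $\nu_\alpha^\sharp=0$, the reduced symplectic form is just the canonical form $\omega_0$, and $\vol_\alpha=dx_1\wedge\cdots\wedge dx_n\wedge du_1\wedge\cdots\wedge du_n$ up to sign. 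Substituting $\p_\alpha$ and $\vol_\alpha$ into Theorem~\ref{leading_thm} — and noting that $\partial P$ has measure zero, so the integral over $\interior(P)$ may be written over $P$ — gives the stated espectrally determined quantity.

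The computation is routine; the two points that genuinely need care are the following, and I expect the second to be the main obstacle. First, the pushforward measure must be well defined, i.e. $\p_\alpha$ must be proper: near a facet $F_i$ one has $\hess\s=l_i^{-1}\n_i\n_i^t+O(1)$ (from the structure of $\s_P$ used in Lemma~\ref{boundary_behavior}), so $\alpha^t\hess\s(x)\alpha\to\infty$ as $x\to\partial P$ exactly when $\alpha\cdot\n_i\neq 0$ for every $i$; this is precisely the genericity of $\alpha$, and it is also, by Lemma~\ref{lemma:regvalues}, the condition that $\alpha$ be a regular value of $\Phi$ in the toric setting. Second, since $X$ is an orbifold and action-angle coordinates exist only over $\interior(P)$, one must confirm that the singular strata of $\partial P$ together with any orbifold points — all of which carry positive-dimensional or finite nontrivial isotropy — contribute only to the lower-order terms of the expansion in Theorem~\ref{mainthm}, and hence can be discarded here; as noted just before the statement of the lemma, every point over $\interior(P)$ is a regular point, so the stationary-phase derivation of the leading term in the proof of Theorem~\ref{mainthm} applies over that set verbatim.
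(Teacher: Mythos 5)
Your proposal is correct and follows essentially the same route as the paper: both compute the moment map, the reduced symbol $\p_\alpha(x,u)=\alpha^t\hess\s\,\alpha+u^t\hess^{-1}\s\,u$, and the reduced measure (Lebesgue, since $\alpha^\sharp=\sum\alpha_i\,d\theta_i$ is closed) in action-angle coordinates, transplanting the argument of Lemma \ref{schrodinger1_lemma} and then invoking Theorem \ref{leading_thm}. Your added remarks on properness of $\p_\alpha$ near $\partial P$ and on the boundary strata only affecting lower-order terms are consistent with the discussion the paper places in the introduction and just before the lemma.
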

\begin{proof}
Let $(x_1,\dots,x_n,\theta_1,\dots,\theta_n)$ be action-angle coordinates for $X$. Given a generic $\alpha\in \bbR^n$ , we claim:
\begin{itemize}
\item if $\Phi$ is the moment map for the induced $\bbT^n$ action on $T^*X$, then
$$
\Phi^{-1}(\alpha)=\{(x,\theta,u,v): x\in \interior(P),\, \theta,u,v\in \bbR^n, v=\alpha\}; \text{and}
$$
\item the form $\mu_\alpha$ defined in \S \ref{sec:classred} is closed and the measure induced on $T^*(X_0/\bbT^n)= T^*\operatorname{int}(P)$ is just the Lebesgue measure $dx_1\wedge\cdots\wedge dx_n\wedge du_1\wedge\cdots\wedge du_n$.
\end{itemize}

The proofs of these claims are essentially the same as the proofs of the corresponding results in $\bbR^{2n}$, which are contained in the proof of Lemma \ref{schrodinger1_lemma}. {In fact, one may employ those arguments almost verbatim, with a few small changes that we now mention.

We saw in \S \ref{sec:classred} that if $\alpha$ is a regular value for $\Phi$, then $\Phi^{-1}(\alpha)$ is contained in $T^*X_0$. Therefore we can use action-angle coordinates $(x_1,\dots, x_n,\theta_1,\dots,\theta_n)$ on $X_0$.  We have
$$
X_0/\bbT^n\simeq \interior (P).
$$
On $T^*X_0$ we pick coordinates 
$$(x_1, \dots, x_n,\theta_1,\dots,\theta_n, u_1, \dots , u_n,v_1, \dots, v_n)$$
 which correspond to the cotangent vector $\sum_{i=1}^n u_idx_i+v_id\theta_i$. Given $\alpha\in  \bbR^n$ a regular value for $\Phi$,  $\Phi^{-1}(\alpha)$ is simply
$$
\Phi^{-1}(\alpha)=\{(x,\theta,u,v):x\in \interior(P), \theta,u,v \in \bbR^n,v=\alpha\}.
$$

For the $1$-form $\mu_{\alpha}$, we note that the tangent space to a $\bbT^n$-orbit is spanned by the $\frac{\partial}{\partial \theta_i}, \, i=1,\dots, n$. The orthogonal complement is  spanned by the $\frac{\partial}{\partial x_i}, \, i=1,\dots, n$ because the toric metric is of the form \eqref{toric_metric_matrix}. We have 
$$\mu_\alpha=\sum_{i=1}^n \alpha_i d\theta_i,$$
and the rest of the argument showing that $\mu_\alpha$ is closed and we have Lebesgue measure on $T^*P$ follows as in the proof of Lemma \ref{schrodinger1_lemma}.}

The symbol of the Laplace operator on $X$ is
$$
\p(x,\theta,u,v)=||(u,v)||^2.
$$
The matrix of the metric defined by the symplectic potential $\s$ on $X$ is given as in \eqref{toric_metric_matrix} by
$$
\begin{bmatrix}
\phantom{-}\hess(\s) & \vdots & 0\  \\
\hdotsfor{3} \\
\phantom{-}0 & \vdots & \hess^{-1}(\s)
\end{bmatrix}.
$$
So the matrix of the corresponding metric on the cotangent bundle is
$$
\begin{bmatrix}
\phantom{-} \hess^{-1}(\s) & \vdots & 0\  \\
\hdotsfor{3} \\
\phantom{-}0 & \vdots & \hess(\s)
\end{bmatrix},
$$
which implies 
$$
\p(x,\theta,u,v)=||(u,v)||^2=u^t\hess^{-1}(\s)u+v^t\hess(\s)v.
$$
When we restrict to $(T^*X)_{\alpha}$, we get
$$
\p_\alpha(x,u)=u^t\hess^{-1}(\s)u+\alpha^t\hess(\s)\alpha.
$$
The lemma now follows from Theorem \ref{leading_thm}.
\end{proof}

\subsubsection{Inverse spectral results for toric Laplacians on $S^2$}
Let $S^1$ act on $S^2$ by rotation around the vertical axis. This action is toric for the Fubini-Study metric on $S^2${, and by normalizing we can assume that the moment polytope is the interval $[-1,1]$}.  Consider action-angle coordinates $(x,\theta)$ on $S^2$ with $x\in[-1,1]$. Any $S^1$-invariant metric on $S^2$ is of the form
$$
\ddot \s dx\otimes dx+\frac{d\theta\otimes d\theta}{\ddot \s},
$$
where $\ddot \s>0$ is such that 
$$
\ddot \s-\frac{1}{1-x^2}
$$
is smooth. We will prove the following
\begin{thm}
Suppose that {$\ddot\s$ is even and convex. Then the $S^1$-invariant metric is espectrally determined.}
\end{thm}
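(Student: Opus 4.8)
The plan is to feed the single analytic input Lemma~\ref{toric_laplacian1_lemma} into a classical Abel inversion, and to invoke evenness and convexity of $\ddot\s$ only at the very end, in order to pass from the distribution function of $\ddot\s$ back to $\ddot\s$ itself.

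First I would specialize Lemma~\ref{toric_laplacian1_lemma} to $X=S^2$ with $n=1$. In the normalized action--angle coordinates $(x,\theta)$, $x\in[-1,1]$, the metric is $\ddot\s\,dx\otimes dx+\ddot\s^{-1}\,d\theta\otimes d\theta$, so $\hess\s=\ddot\s$ and $\hess^{-1}\s=1/\ddot\s$ are scalars. For the $S^1$-action on $S^2$ a value $\alpha\in\bbR$ is a regular value of the moment map exactly when $\alpha\neq0$: the only non-free points are the two poles, which are fixed, so the moment map vanishes identically on those two cotangent fibres and, by Lemma~\ref{lemma:regvalues}, $\Phi^{-1}(\alpha)\subset T^*X_0$ precisely when $\alpha\neq0$. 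Hence Lemma~\ref{toric_laplacian1_lemma} yields, for every $\alpha\neq0$ and every $\rho\in C_0^\infty(\bbR)$, that
\[
\int_{-1}^{1}\!\int_{\bbR}\rho\!\left(\alpha^2\ddot\s(x)+\frac{u^2}{\ddot\s(x)}\right)du\,dx
\]
is espectrally determined; equivalently, the pushforward $M_\alpha:=(\Psi_\alpha)_*(dx\,du)$ of Lebesgue measure under $\Psi_\alpha(x,u)=\alpha^2\ddot\s(x)+u^2/\ddot\s(x)$ is espectrally determined. Since $\ddot\s\to\infty$ at $x=\pm1$ the map $\Psi_\alpha$ is proper, so $M_\alpha$ is a genuine locally finite measure.

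Next I would carry out the $u$-integration. A short computation gives
\[
M_\alpha\big((-\infty,t]\big)=\int_{\{\,x\,:\ \alpha^2\ddot\s(x)<t\,\}}2\sqrt{\ddot\s(x)\big(t-\alpha^2\ddot\s(x)\big)}\;dx,
\]
and differentiating once in $t$ (the boundary of the region of integration contributes nothing, as the integrand vanishes there) exhibits the density $m_\alpha$ of $M_\alpha$ as
\[
m_\alpha(t)=\int_{[\alpha^2\ddot\s(0),\,t)}\frac{d\lambda_\alpha(s)}{\sqrt{t-s}},\qquad
\lambda_\alpha:=(\alpha^2\ddot\s)_*\big(\sqrt{\ddot\s}\,dx\big),
\]
a finite measure on $[\alpha^2\ddot\s(0),\infty)$. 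This is an Abel integral equation for $\lambda_\alpha$, and the Abel transform is injective; hence $\lambda_\alpha$ is espectrally determined for any one fixed $\alpha\neq0$, and so is $\alpha^2\ddot\s(0)=\inf\operatorname{supp}\lambda_\alpha$, whence $\ddot\s(0)=\min\ddot\s$ is read off. I expect the main technical work to lie here: correctly identifying the measure produced by Lemma~\ref{toric_laplacian1_lemma}, checking the integrability forced by the boundary blow-up of $\ddot\s$, and justifying the Abel inversion on the relevant class of measures. The key structural point is that one admissible value of $\alpha$ already suffices.

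Finally I would use evenness and convexity. Since $\ddot\s$ is even and smooth on $(-1,1)$ we have $\ddot\s'(0)=0$, and convexity then forces $\ddot\s$ to be non-decreasing on $[0,1)$ and equal to its reflection on $(-1,0]$, with $\ddot\s\to\infty$ at the endpoints. Let $\xi\colon[\ddot\s(0),\infty)\to[0,1)$ be the (generalized) inverse of $\ddot\s|_{[0,1]}$. Substituting $s=\alpha^2\ddot\s(x)$ on the half-interval $[0,1]$ gives, for Borel $B$,
\[
\lambda_\alpha(B)=2\int_{\{\,x\in[0,1]\,:\ \alpha^2\ddot\s(x)\in B\,\}}\sqrt{\ddot\s(x)}\;dx,
\]
so, $\alpha$ being known, $\lambda_\alpha$ determines the monotone function $\xi$, hence $\ddot\s$ on $[0,1]$, hence $\ddot\s$ on all of $[-1,1]$ by evenness. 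As the metric is $\ddot\s\,dx\otimes dx+\ddot\s^{-1}\,d\theta\otimes d\theta$ in these coordinates, the metric is espectrally determined. Here evenness is exactly what allows the distribution function of $\ddot\s$ to recover $\ddot\s$ rather than merely its monotone rearrangement, and convexity guarantees that each half of $\ddot\s$ is monotone so that $\xi$ is well defined; if $\ddot\s$ is convex but not strictly convex the argument is unchanged, with $\ddot\s$ recovered as the generalized inverse of $\xi$.
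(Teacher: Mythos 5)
Your proposal is correct and follows essentially the same route as the paper: specialize Lemma \ref{toric_laplacian1_lemma} to $n=1$, recognize the resulting espectrally determined quantity as a half-order fractional (Abel) integral of a measure built from $\ddot\s$, invert the Abel transform, and then use evenness and convexity to recover $\ddot\s$ from the resulting weighted distribution function via its monotone (generalized) inverse. The only differences are presentational — you phrase the computation in terms of pushforward measures and their densities, and keep $\alpha$ general, where the paper sets $\alpha=1$ and performs the explicit changes of variables $t=\ddot\s+v^2/\ddot\s$ and $y=\ddot\s(x)-c$ — but the mathematical content coincides.
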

\begin{proof}
It follows from Lemma \ref{toric_laplacian1_lemma} with $n=1$ that given a metric on $S^2$ of the form
$$
\ddot \s dx\otimes dx+\frac{d\theta\otimes d\theta}{\ddot \s},
$$
the quantity
$$
\int_{[-1,1]\times \bbR}\rho\left(\alpha^2\ddot \s+\frac{u^2}{\ddot \s}\right)dx\wedge du
$$
is espectrally determined. Note that this quantity is 
\begin{equation}\label{eqn:s1quantity}
\alpha\int_{[-1,1]\times \bbR}\rho\left(\alpha^2\left(\ddot \s+\frac{v^2}{\ddot \s}\right)\right)dx\wedge dv ,
\end{equation}
so $\alpha$ will not play a role; we set $\alpha=1$. We next make a change of variable, setting
$$
t=\ddot \s+\frac{{v}^2}{\ddot \s}\text{, or equivalently, } {v}^2=\ddot \s(t-\ddot \s).
$$
In the $(x,t)$ variables, \eqref{eqn:s1quantity} becomes
\begin{equation}\label{eqn:s1xt}
\int_{-1}^1\int_{\ddot \s(x)}^{+\infty} \rho(t) \frac{\sqrt {\ddot \s}}{2\sqrt{t-\ddot \s}} dtdx.
\end{equation}

Our hypotheses that $\ddot \s$ is even and convex imply that $\ddot \s$ has a minimum at $0$ and is increasing on $[0,+\infty)$. Let $c$ be the minimum value of $\ddot \s$.  Note that{, since $\ddot \s >0$,} $c$ is also the minimum of $\ddot \s+\frac{u^2}{\ddot \s}.$
If the support of $\rho$ is contained in $(-\infty,c)$, the integral
$$
\int_{[-1,1]\times \bbR}\rho\left(\ddot \s+\frac{u^2}{\ddot \s}\right)dx\wedge du
$$
vanishes; since this integral is espectrally determined, we see that $c$ is espectrally determined.

Set $w=\ddot \s-c$, and $\tau=t-c$. The integral \eqref{eqn:s1xt} becomes
$$
\int_{-1}^1\int_{w(x)}^{+\infty} \rho(\tau+c) \frac{\sqrt {w+c}}{2\sqrt{\tau-w}} d\tau dx.
$$
By switching the order of integration and renaming $\rho(\tau+c)$, the above integral becomes 
$$
\int_{0}^{+\infty}\int_{0}^{{w^{-1}(\tau)}} \rho(\tau) \frac{\sqrt {w+c}}{\sqrt{\tau-w}} dxd\tau.
$$
We conclude that the function
$$
t\mapsto \int_{0}^{{w^{-1}(t)}}  \frac{\sqrt {w+c}}{\sqrt{t-w}} dx
$$
is espectrally determined. Now set $y=w(x)$. {Then} 
\begin{equation}\label{eqn:Abeltransform}
t\mapsto \int_{0}^{t}  \frac{\sqrt {y+c}}{\sqrt{t-y}} \frac{dw^{-1}}{dy}(y)dy
\end{equation}
is espectrally determined. 

{The function in \eqref{eqn:Abeltransform} can be viewed as the Abel transform of another function, as we now explain. Recall that the fractional integration operation of Abel is defined as
\[
J^a g(s) = \frac{1}{\Gamma(a)} \int_0^s (s-\nu)^{a-1} g(\nu) d\nu,
\]
for $a>0$.  {F}ractional integration of order $\frac{1}{2}$ applied to the function
\begin{equation}\label{eqn:abel}
y\mapsto \sqrt {y+c}\frac{dw^{-1}}{dy}(y)
\end{equation}
gives \eqref{eqn:Abeltransform}, up to a constant factor of $\Gamma(\frac{1}{2})=\sqrt{\pi}$.   We claim that the Abel transform of a function determines the function.  In particular, we have $J^a J^b = J^{a+b}$.  So if we let $a=1-b$, we have 
\[
J^{1-b} J^{b} g(s) = J^1 g(s) = \int_0^s g(\nu) d\nu .
\]
Taking the derivative of the right side, we recover $g(s)$.  Applying this to our situation, we see that the function in \eqref{eqn:abel} is espectrally determined.  Thus $w$, and in fact $\ddot \s$, are also espectrally determined, which implies that the metric is espectrally determined.}
\end{proof}

\subsubsection{Toric Laplacian on toric orbifolds}
In \cite{dgs2} the authors proved that the equivariant spectrum of a generic toric orbifold determines the toric orbifold up to equivariant symplectomorphism. This is done by proving that, generically, the equivariant spectrum determines the labeled polytope of the toric orbifold{, up to two choices and up to translation,} and then using Theorem \ref{lt}. 
{Our genericity assumptions are that the moment polytope has no parallel facets and no subpolytopes, where a ``subpolytope'' is a convex polytope that can be formed from a proper nonempty subset of the facet normals and associated volumes of our moment polytope.  See \cite[Lemma 5.8]{dgs2} for more details.}

In this section we will show that{, under these same genericity assumptions,} the {unlabeled} moment polytope is espectrally determined. This means that we do not need the full equivariant spectrum to determine the moment polytope, but only ``large" weights for the induced action on eigenspaces. 
\begin{thm}
Let $(X,\omega)$ be a generic toric orbifold endowed with any toric K\"ahler metric, and let $\op_h = h^2 \Delta$. The {unlabeled} moment polytope of $X$ is espectrally determined{, up to two choices and up to translation.}
\end{thm}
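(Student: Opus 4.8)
The plan is to extract from the spectral data essentially the same information that \cite{dgs2} uses to reconstruct a toric polytope, and then to feed it into the reconstruction lemma of that paper. Fix a generic $\alpha \in \bbR^n$. By Lemma \ref{toric_laplacian1_lemma} the linear functional
\[
\rho \longmapsto \int_{P \times \bbR^n} \rho\bigl(\alpha^t \hess\s(x)\,\alpha + u^t \hess^{-1}\s(x)\,u\bigr)\, dx\, du , \qquad \rho \in C_0^\infty(\bbR),
\]
is espectrally determined; equivalently, the pushforward $(\p_\alpha)_*(dx\,du)$ of Lebesgue measure on $T^*\interior(P)$ is espectrally determined. Carrying out the Gaussian-type integral in $u$ — the ellipsoid $\{u : u^t \hess^{-1}\s(x)\, u < t\}$ has volume $\omega_n\, t^{n/2}\sqrt{\det \hess\s(x)}$, where $\omega_n$ is the volume of the unit ball — one finds that this pushforward has distribution function
\[
\Psi_\alpha(s) = \omega_n \int_P \bigl(s - \alpha^t \hess\s(x)\,\alpha\bigr)_+^{n/2}\, \sqrt{\det \hess\s(x)}\; dx < \infty .
\]
Thus $\Psi_\alpha$ is, up to a constant, the fractional integral $J^{n/2+1}$ of the finite measure $N_\alpha := \bigl(x \mapsto \alpha^t \hess\s(x)\,\alpha\bigr)_* \bigl(\sqrt{\det \hess\s(x)}\, dx\bigr)$ on $\bbR$, and inverting this fractional integral exactly as in the Abel-transform step of the preceding ($S^2$) theorem shows that $N_\alpha$ is itself espectrally determined, for every generic $\alpha$.

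Next I would read the facet normals off the large-$v$ tail of $N_\alpha$. By Lemma \ref{boundary_behavior}, near a facet $F_i$ the symplectic potential has the universal boundary behaviour $\hess\s(x) = l_i(x)^{-1}\, \n_i \n_i^{\,t} + O(1)$ — part (3) of that lemma is precisely what normalises the coefficient of the blow-up to $1$ — while $\sqrt{\det \hess\s(x)} = O\bigl(l_i(x)^{-1/2}\bigr)$ is integrable up to the boundary. A computation of the pushforward near each facet then gives
\[
N_\alpha\bigl([v,\infty)\bigr) = \frac{1}{\sqrt v}\sum_{i=1}^{d} E_i\,\bigl|\alpha \cdot \n^0_i\bigr| + o\!\Bigl(\tfrac{1}{\sqrt v}\Bigr) \qquad (v \to \infty),
\]
where $\n^0_i$ is the primitive inward normal to $F_i$ and each $E_i > 0$ is a positive constant into which the label $m_i$ has been absorbed. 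The function $\alpha \mapsto \sum_i E_i|\alpha \cdot \n^0_i|$ is conical and piecewise linear, and its creases lie along $\bigcup_i \{\alpha : \alpha \cdot \n^0_i = 0\}$; since these hyperplanes are distinct under the genericity hypothesis that $P$ has no parallel facets, the primitive normals $\{\n^0_i\}$ are espectrally determined. As promised, the labels $m_i$ are invisible here — they only enter through the positive constants $E_i$ — which is exactly why one can recover only the \emph{unlabeled} polytope.

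It remains to recover the facet positions, i.e.\ the translation class of $P$. The idea is to restrict $\alpha$ to the hyperplane $(\n^0_i)^\perp$: this annihilates the $F_i$-term in the tail above, and the remaining near-$F_i$ contribution to $N_\alpha$ comes from the induced toric K\"ahler structure on the codimension-two symplectic suborbifold $Z_i \subset X$ whose moment polytope is $F_i$. Isolating that contribution gives the one-dimension-lower analogue of the data above, so the facet polytopes $F_i$ are determined recursively, up to translation and the two-fold ambiguity, and hence so are their $(n-1)$-dimensional volumes. Knowing all the normals $\n^0_i$ together with these facet volumes, and using the genericity hypothesis that $P$ has no subpolytopes to exclude competing reassemblies, one recovers $P$ up to translation and two choices by the Minkowski-type reconstruction argument of \cite[Lemma 5.8]{dgs2}. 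I expect the last step to be the main obstacle: the measures $N_\alpha$ are globally metric-dependent, and the work is in showing that all of this metric-dependence is confined to the positive constants $E_i$ and their lower-dimensional counterparts, leaving the purely combinatorial and convex-geometric data — the normal directions, the facet shapes and their incidences — on which the reconstruction of \cite{dgs2} runs; in particular, controlling the contributions of the lower-dimensional strata and of the corners of $P$, where $\hess\s$ has a more complicated singularity, is the technical heart of the matter.
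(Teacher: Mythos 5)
Your first half is a legitimate alternative to the paper's route: inverting the fractional integral $J^{n/2+1}$ to recover $N_\alpha=(x\mapsto\alpha^t\hess\s\,\alpha)_*(\sqrt{\det\hess\s}\,dx)$ plays the role of the paper's step of letting $\rho_\param(s)=e^{i\param s}$ to obtain $\int_P\sqrt{\det\hess\s}\,e^{i\param\alpha^t\hess\s\,\alpha}\,dx$ (the Fourier transform of the same measure), and reading the normal directions off the creases of the conical function $\alpha\mapsto\sum_iE_i|\alpha\cdot\n_i^0|$ is a clean way to get $\{\pm\n_i^0\}$. The tail computation is plausible (the corner strata contribute at the same order $v^{-1/2}$ but with coefficients that vanish with the size of the corner neighborhoods, so they are absorbed into the facet integrals), though as you note it needs care.

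The genuine gap is the second half: the Minkowski reconstruction needs the Euclidean $(n-1)$-volumes of the facets, and your argument does not produce them. As you correctly observe, the coefficients $E_i$ are metric-dependent: near the interior of $F_i$ one has $l_i\det\hess\s\to\det B\cdot\n_i^tB^{-1}\n_i$ where $B$ is the smooth positive-definite remainder of the Hessian, so $E_i\propto\int_{F_i}\sqrt{\delta_i}$ with $\delta_i$ depending on the potential, not $\vol(F_i)$. The paper's proof is engineered precisely to avoid this: after obtaining $\int_P\sqrt{\det H}\,e^{i\param\alpha^tH\alpha}\,dx$ it integrates against $e^{-i\param\alpha\cdot u}\rho(\alpha)$ and applies stationary phase in $\alpha$, and the factor $|\det H|^{-1/2}$ from the stationary phase lemma exactly cancels $\sqrt{\det H}$, leaving $\int_Pe^{-\frac{i\param}{4}u^tH^{-1}u}\,dx$ with plain Lebesgue measure; then Lemma \ref{boundary_behavior}(3) supplies the universal, metric-independent normalization $\n_i^t\hess^{-1}(\s)\,\n_i/l_i=1$ on $F_i$, so the $t\to\infty$ limit of $\int_Pe^{-t\,\n_i^tH^{-1}\n_i}\,dx$ yields the actual facet volumes. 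Your proposed substitute --- restricting $\alpha$ to $(\n_i^0)^\perp$ and recursing on an ``induced toric K\"ahler structure'' on the facet suborbifold --- is not carried out, and it is not clear it can be: with $\alpha\cdot\n_i^0=0$ the near-$F_i$ contribution to $N_\alpha$ is governed by the restriction of $B$ and of $\sqrt{\det\hess\s}$ to $F_i$, which is again metric-dependent data with no evident normalization, so the recursion faces the same obstruction one dimension down. To close the argument you need some mechanism, like the paper's cancellation of $\sqrt{\det H}$ combined with Lemma \ref{boundary_behavior}(3), that extracts a metric-independent quantity equal to $\vol(F_i)$.
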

\begin{proof}
Let $P$ be the moment polytope of $X$, and let $g$ be the symplectic potential for the toric K\"ahler metric. We write $H$ for $\hess(g)$. Given any $\rho \in C_0^\infty(\bbR)$ and a generic $\alpha\in \bbR^n$, Lemma \ref{toric_laplacian1_lemma} implies that
$$
\int_{P\times \bbR^n}\rho(\alpha^tH(x) \alpha +u^tH^{-1}(x)u)dx_1\wedge\cdots\wedge dx_n\wedge du_1\wedge\cdots\wedge du_n
$$
is espectrally determined, where $(x_1,\dots,x_n)$ are coordinates on $P$. Fix $x\in P$. There is an orthogonal matrix $S$ which diagonalizes $H^{-1}$. Let $\lambda_1(x),\dots, \lambda_n(x)$ be the eigenvalues of $H^{-1}(x)$ and let $\Lambda(x)$ be the diagonal $n \times n$ matrix with positive entries $\lambda_1,\dots, \lambda_n$. We have 
$$
\int_{\bbR^n}\rho(\alpha^t {H}\alpha +u^tH^{-1}u)du_1\wedge\cdots\wedge du_n=\int_{\bbR^n}\rho(\alpha^tH \alpha +(Su)^t\Lambda Su)du_1\wedge\cdots\wedge du_n.
$$
Set $y=Su$ and change variables in the integral above to get
$$
\int_{\bbR^n}\rho(\alpha^tH \alpha +\sum_{i=1}^n\lambda_iy_i^2)dy_1\wedge\cdots\wedge dy_n.
$$
Now set $z_i=\sqrt \lambda_i y_i$ to get
\begin{equation}\label{eqn:especz}
\int_{\bbR^n}\rho(\alpha^t {H}\alpha +u^tH^{-1}u)du_1\wedge\cdots\wedge du_n=
\int_{\bbR^n}\frac{\rho(\alpha^tH \alpha +\sum_{i=1}^nz_i^2)}{\prod_{i=1}^n\sqrt \lambda_i}dz_1\wedge\cdots\wedge dz_n.
\end{equation}
Note that
$$
\frac{1}{\prod_{i=1}^n \sqrt \lambda_i}= \sqrt {\det H}.
$$
We will use spherical coordinates on $\bbR^n$ to write the right side of \eqref{eqn:especz} as
$$
C_n\sqrt{\det H}\int_{0}^{+\infty}\rho(\alpha^tH \alpha +r^2)r^{n-1}dr,
$$
where $C_n$ is a constant that depends only on $n$. It follows that 
$$
\int_P\sqrt{\det H}\int_{0}^{+\infty}\rho(\alpha^tH \alpha +r^2)r^{n-1}drdx
$$
is espectrally determined. 
By taking the limit of an appropriate sequence of functions that equal $\rho$ on larger and larger sets, we see that we can make {$\rho_\param(s)=e^{i\param s}$}; it follows that
\begin{equation}\label{eqn:espec1}
\int_P\sqrt{\det H}e^{i\param \alpha^tH \alpha} dx
\end{equation}
is espectrally determined because the integral
$$
\int_{0}^{+\infty}e^{i\param r^2}r^{n-1}dr
$$
is convergent.

Now fix $u \in \bbR^n$. Multiplying \eqref{eqn:espec1} by $\rho(\alpha)e^{{-}i\param \alpha^t u}$ for some $\rho \in C_0^\infty(\bbR)$ and integrating with respect to $\alpha$, it follows that
$$
\int_P\int_{\bbR^n}\sqrt{\det H}e^{i\param (\alpha^tH \alpha-\alpha^tu)}\rho(\alpha) d\alpha dx
$$
is espectrally determined. We are going to apply stationary phase to this integral when $\param = \frac{1}{2h}$ tends to infinity.  First, however, we rewrite the integral slightly. 
Set $b=-\frac{H^{-1}u}{2}$.
We have 
$$
(\alpha+b)^tH(\alpha+b)=\alpha^tH \alpha-\alpha^tu+\frac{u^tH^{-1}u}{4}
$$
so that 
$$
\int_{\bbR^n}e^{i\param (\alpha^tH \alpha-\alpha^tu)}\rho(\alpha) d\alpha=e^{-\frac{i\param u^tH^{-1}u}{4}}\int_{\bbR^n}e^{i\param ((\alpha+b)^tH (\alpha+b))}\rho(\alpha) d\alpha .
$$
Set $y=\alpha+b$ in the integral above. It follows that 
$$
\int_{\bbR^n}e^{i\param (\alpha^tH \alpha-\alpha^tu)}\rho(\alpha) d\alpha=e^{-\frac{i\param u^tH^{-1}u}{4}}\int_{\bbR^n}e^{i\param y^tH y}\rho\left(y+\frac{H^{-1}u}{2}\right) dy.
$$
We observe that we can apply stationary phase as in Lemma \ref{stationary_phase}  to get
$$
\int_{\bbR^n}e^{i\param y^tH y}\rho\left(y+\frac{H^{-1}u}{2}\right) dy=(2\pi h)^{\frac{n}{2}}|\det H|^{-1/2}e^{i\pi n/4}\left(\exp\left(\frac{-ihb(D)\tilde{\rho}}{{2}}\right)\right)(0),
$$ 
where $\tilde{\rho}(y)=\rho\left(y+\frac{H^{-1}u}{2}\right)$
and $b(D)$ is a constant coefficient differential operator of the form
$$
-\sum b_{kl}\frac{\partial }{\partial y_k}\frac{\partial }{\partial y_l},
$$
with $B=H^{-1}$.
It follows that 
$$
\int_{\bbR^n}e^{i\param (\alpha^tH \alpha-\alpha^tu)}\rho(\alpha) d\alpha=(2\pi h)^{\frac{n}{2}}e^{-\frac{i\param u^tH^{-1}u}{4}}\frac{e^{i\pi n/4}}{\sqrt {\det H}}\left(\exp\left(\frac{-ihb(D)\tilde{\rho}}{4}\right)\right)(0).
$$

We next determine the value of the exponential term involving $\tilde{\rho}$.  Since $H^{-1}$ is smooth on $P$, there exists $R$ such that $-\frac{H^{-1}u}{2} \in B_R(0)$ for all $x\in P$. Pick $\rho$ such that $\rho$ is identically $1$ on $B_{2R}$ and $0$ outside $B_{3R}$. All the derivatives of $\tilde{\rho}$ at $0$ are zero because the derivatives of $\rho$ at $\frac{H^{-1}u}{2}$ are zero. For such $\rho$ we have  
$$
\int_P\int_{\bbR^n}\sqrt{\det H}e^{i\param (\alpha^tH \alpha-\alpha^tu)}\rho(\alpha) d\alpha dx=(2\pi h)^{\frac{n}{2}}e^{i\pi n/4} \int_P e^{-\frac{i\param u^tH^{-1}u}{4}}dx
$$
and we may conclude that 
$$
\int_P e^{-\frac{i\param u^tH^{-1}u}{4}}dx
$$
is espectrally determined for all $u\in \bbR^n$. 

Consider the function
$$
\param \rightarrow \int_P e^{-\frac{i\param u^tH^{-1}u}{4}}dx.
$$
This function is a holomorphic function of $\param$ because $u^tH^{-1}u$ is smooth on $P$. Therefore its values on the positive real line determine it completely. Said in another way, if two holomorphic functions agree on the positive real line then they agree on $\bbC$. This means that for any positive $t$, the quantity
$$
\int_P e^{-tu^tH^{-1}u}dx
$$
is espectrally determined. But now consider the limit of this quantity as $t$ tends to $\infty$. The matrix $H^{-1}$ is strictly positive definite in the interior of $P$; near the boundary of $P$, the matrix $H^{-1}$ acquires a kernel spanned by the normals to the facets of $P$. When $u$ is distinct from all those normals, we have
$$
\lim_{t\rightarrow \infty} \int_P e^{-tu^tH^{-1}u}dx=0
$$
because $u^tH^{-1}u$ is uniformly bounded from below on $P$. If $u=\n_i$ for some $i\in \{1,\dots, 
{d}\}$ then we claim that
$$
\lim_{t\rightarrow \infty} \int_P e^{-tu^tH^{-1}u}dx=\sum_{j:F_j\perp \n_i} \int_{F_j} 1.
$$
{For some intuition about this claim, consider a general integral of the form $\int e^{-tf(x)} dx$ for some smooth function $f(x)$.  When we take the limit of this integral as $t$ tends to infinity, we see that the contributions to the integral will come at the points where $f(x)=0$.  In our situation, the third property of Lemma \ref{boundary_behavior} tells us that we can replace $\n_i^t H^{-1}\n_i$ by $l_i s(x)$, where $s(x)$ is a smooth function that does not vanish on $P$.  Thus contributions to the integral will come from the facets for which $\n_i$ is a normal vector. In particular, for each $\n_i$ normal to a facet of $P$ we recover the sums of the volumes of the facets perpendicular to $\n_i$. 

If $P$ has no parallel facets {and no subpolytopes}, our theorem follows from the existence of a positive solution to the Minkowski problem, namely that the facet normals and the facet volumes determine the polytope up to {two choices and up to} translation (e.g., \cite[Thm. 2]{k}).}
\end{proof}

The unlabeled polytope of a toric orbifold $(X, \omega)$ does not determine the toric orbifold up to symplectomorphism, but it does determine the fan associated to the toric variety $X$, and hence determines $X$ up to equivariant biholomorphism. See \cite[Thm. 9.4]{lt} for more details.


\section{Weighted projective spaces}\label{weighted}

Our applications of Theorem \ref{mainthm} have used only the $\mu_{0,0}$ term in the asymptotic expansion; in a subsequent paper, we plan to explore applications involving the {other} terms as well. In the toric setting we note that there are never points with discrete isotropy groups. Only points in the pre-image of the facets can have nontrivial isotropy, and those points always have isotropy groups of dimension at least 1.

 An example of an interesting situation where we could make use of these extra {terms} is as follows. Let $X$ be the unit sphere in $\bbC^{d}$ given by
 $$
 X=\{(z_1,\dots,z_d): \sum_{i=1}^d |z_i|^2=1\}.
 $$
 Let $S^1$ act on $X$ with weights $(m_1,\dots,m_d)$, i.e.,
 $$
 \action(e^{i\theta})(z_1,\dots,z_d)=(e^{im_1\theta}z_1,\dots,e^{im_d\theta}z_d).
 $$
{
\begin{prop}\label{prop:weights}
Let $X$ be endowed with an $S^1$-invariant metric. The asymptotic equivariant spectrum of the Laplace operator on $\Sm(X)^{\frac{1}{h}}$ determines the weights if they are all distinct primes.
\end{prop}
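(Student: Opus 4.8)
The plan is to apply Theorem~\ref{mainthm} with $\bbT^m=S^1$, $\alpha=1$ and $\op_h=h^2\Delta$, and then to read the weights off the oscillatory terms in the second line of the expansion, which encode the finite isotropy elements of the action. Write $m_i=p_i$ and let $C_i=\{z\in X:z_k=0\text{ for }k\neq i\}$, the $S^1$-orbit of the $i$th coordinate vector. First I would record the isotropy data. For $z\in X$ with support $J=\{i:z_i\neq 0\}$ the stabilizer of $z$ in $S^1$ is the cyclic group of order $\gcd\{p_i:i\in J\}$; since the $p_i$ are distinct primes this is trivial when $|J|\geq 2$, equals $\bbZ/p_i\bbZ$ when $J=\{i\}$, and $J=\emptyset$ is impossible on the sphere. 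Hence $S^1$ acts locally freely on all of $X$, so $X_0=X$ is compact and, by Lemma~\ref{lemma:regvalues}, every $\alpha$ --- in particular $\alpha=1$ --- is a regular value of the moment map on $T^*X$. The non-identity elements of the finite stabilizers are exactly $\zeta_i^\ell:=e^{2\pi i\ell/p_i}$ for $i=1,\dots,d$ and $\ell=1,\dots,p_i-1$, and these are pairwise distinct because $\zeta_i^\ell$ has exact order $p_i$. A point is fixed by $\zeta_i^\ell$ iff $z_k=0$ whenever $p_i\nmid\ell p_k$, and since $0<\ell<p_i$ with $p_i$ prime this forces $p_k=p_i$, i.e.\ $k=i$; so the fixed-point set of $\zeta_i^\ell$ in $X$ is exactly the single circle $C_i$. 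The normal representation of $\zeta_i^\ell$ along $C_i$ is $\bbC^{d-1}$ with weights $\ell p_k\bmod p_i$, $k\neq i$, all nonzero, so the induced real linear map $A_{i,\ell}$ on the normal space satisfies $|\det(A_{i,\ell}-I)|=\prod_{k\neq i}4\sin^2(\pi\ell p_k/p_i)>0$.

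Next I would feed this into Theorem~\ref{mainthm}, which produces, for $h=1/N$ with $N\in\bbZ_{>0}$, an asymptotic expansion of $\mu_{1/h}(\rho)$ whose first line is a non-oscillatory series (all of its coefficients being independent of $N$) starting at order $(2\pi h)^{2-2d}$, and whose second line is a sum over the pairs $(i,\ell)$ above of terms
\[
\chi_{\frac{1}{h}}(\zeta_i^{-\ell})\,|\det(A_{i,\ell}-I)|^{-1}\bigl((2\pi h)^{-k_i}(\fp^{C_i}_{1})_*\nu_{C_i}(\rho)+O(h^{-k_i+1})\bigr),
\]
where $\chi_{\frac{1}{h}}(\zeta_i^{-\ell})=e^{-2\pi i\ell N/p_i}$ and $k_i$ is the integer attached to $C_i$ in Theorem~\ref{mainthm}. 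Because $C_i$ is a single $S^1$-orbit, $C_i/S^1$ is an (orbifold) point, so the reduced space $(T^*C_i)_1\cong T^*(C_i/S^1)$ is a point and $(\fp^{C_i}_{1})_*\nu_{C_i}=c_i\,\delta_{E_i}$ is a nonzero point mass, with $c_i>0$ and $E_i>0$ the value of the reduced symbol of $h^2\Delta$ there (a squared covector norm, hence positive). Choosing $\rho\in C_0^\infty(\bbR)$ equal to $1$ on an interval large enough that $\rho(E_i)=1$ for every $i$ --- possible since the $E_i$ are finitely many finite positive numbers --- the leading term in powers of $h$ of the frequency-$\ell/p_i$ part of $\mu_{1/N}(\rho)$ is $(2\pi h)^{-k_i}|\det(A_{i,\ell}-I)|^{-1}c_i\neq 0$.

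Finally I would extract the weights. The frequencies $\ell/p_i\bmod 1$ ($1\leq i\leq d$, $1\leq\ell\leq p_i-1$) lie in $(0,1)$ and are pairwise distinct --- distinct $i$ give fractions whose lowest-terms denominators are the distinct primes $p_i$, and equal $i$ with distinct $\ell$ give distinct fractions --- while the entire first line sits at frequency $0$ and the $O(h^\infty)$ localization remainder is of size $O(N^{-\infty})$. Using the uniqueness of a representation $\sum_f e^{2\pi i f N}g_f(N)+O(N^{-\infty})$ with $f$ ranging over a finite subset of $[0,1)$ and each $g_f$ admitting an asymptotic expansion in descending powers of $N$ --- a standard fact, proved by isolating each frequency along residue classes of $N$ and inducting on the leading exponent --- the set of frequencies $f$ for which $g_f\not\equiv 0$ is determined by the function $N\mapsto\mu_{1/N}(\rho)$, hence by the asymptotic equivariant spectrum (Definition~\ref{defn:espec}). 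By the previous paragraph this set contains $\ell/p_i$ for every $i$ and every $\ell\in\{1,\dots,p_i-1\}$, and by the isotropy analysis it contains nothing else; each of its elements has lowest-terms denominator a prime, namely $p_i$, so the collection of these denominators is precisely $\{p_1,\dots,p_d\}$, which recovers the weights.

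The main obstacle will be establishing that the fixed-point contributions in Theorem~\ref{mainthm} genuinely survive --- i.e.\ that $(\fp^{C_i}_{1})_*\nu_{C_i}$ is a nonzero measure on $\bbR$, which here reduces to noting that $C_i/S^1$ is a positive-mass orbifold point --- together with keeping the combinatorics of the isotropy elements straight. It is this combinatorial point that forces the primality hypothesis: for composite weights, distinct torsion elements of the stabilizers can fix submanifolds of different dimensions and the oscillation frequencies carry information about divisors of the $m_i$ rather than the $m_i$ themselves, so one could no longer read off the weights.
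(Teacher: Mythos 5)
Your proof is correct and follows the same overall strategy as the paper: compute the finite isotropy data of the weighted $S^1$-action on the sphere (nontrivial stabilizers only along the coordinate circles $C_i$, with the non-identity elements being exactly the nontrivial $p_i$th roots of unity and fixed sets $C_i$), then apply Theorem~\ref{mainthm} with $\alpha=1$ and read the weights off the oscillatory fixed-point contributions in the second line of the expansion. Where you genuinely diverge is in the final extraction step. The paper specializes to $h=1/(Mm_1\cdots m_d)$, observes that the oscillatory sum \eqref{eqn:wpp} ``becomes real'' there, and concludes that the product $m_1\cdots m_d$ (hence, by unique factorization, each $m_j$) is espectrally determined; this is quite terse, and the reality criterion is delicate, since at leading order the contributions of $s$ and $m_k-s$ pair into a real combination for every $N$ (equal determinants $|\det(A_{s,k}-I)|=|\det(A_{m_k-s,k}-I)|$ and conjugate exponentials), so reality alone does not obviously single out the multiples of $\prod m_k$. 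Your alternative --- isolating each frequency $\ell/p_i\bmod 1$ along residue classes of $N$, verifying via the point-mass structure of $(\fp^{C_i}_1)_*\nu_{C_i}$ and a suitable choice of $\rho$ that each frequency appears with a nonvanishing leading coefficient, and recovering the $p_i$ as the lowest-terms denominators --- fills this gap cleanly and makes explicit exactly where the primality hypothesis enters (pairwise distinctness of the frequencies and the identification of fixed sets). It also has the advantage of not depending on the precise value of the exponent $k_i$ in Theorem~\ref{mainthm}, which the paper's own normalization of the fixed-point prefactor leaves slightly ambiguous. In short: same skeleton, but your endgame is a sharper and more defensible version of the paper's.
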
}

This result follows from Theorem \ref{mainthm}.
Note that studying $S^1$-equivariant functions on $X$ is equivalent to studying invariant sections of a line bundle as in \S \ref{sec:quantum_reduc}, so our setting is in fact that of weighted projective spaces.

\begin{proof}
We start by determining the non-trivial isotropy groups of the $S^1$ action. Given $z\in X$ and $e^{i\theta}\in S^1$, $\action(e^{i\theta})z=z$ if and only if
$$
e^{i{m_k \theta}}z_k=z_k, \text{ \ for all } k=1,\dots, d,
$$
or equivalently,
$$
\theta=\frac{2\pi s}{m_k} \text{\ with\ } s\in \bbZ \text{\ or\ } z_k=0, \text{\ for all } k=1,\dots, d.
$$
Because we are assuming that $m_1,\dots, m_d$ are relatively prime, there cannot be two distinct values of $k$, say $a$ and $b$, for which $\theta=\frac{2\pi s_a}{m_{a}}$ and $\theta=\frac{2\pi s_b}{m_{b}}$ for integers $s_a$ and $s_b$. This implies that $z\in X$ has nontrivial isotropy if and only if $z$ has exactly one nonzero component. Set
$$
X_k=\{(z_1,\dots,z_d) \in X: z_j=0\,\text{\ for all } j\ne k\}.
$$
The $S^1$ action on $X$ is free on $X\setminus \bigcup_{k=1}^d X_k$.  If $z\in X_k$ then the isotropy group of $z$ is $\bbZ_{m_k}$, which is generated by the $m_k$th-roots of unity in $S^1$.  Also, the fixed point set of 
$e^{\frac{i2\pi s}{m_k}}$ equals $X_k,$ for all $s=1,\dots, m_k-1$. We conclude that the list of elements in $S^1$ with nontrivial fixed point sets are all the nontrivial $m_k$th-roots of unity for $k=1, \dots, d$, and the corresponding fixed sets are the one-dimensional manifolds $X_k$, $k=1, \dots, d$. The normal space to each $X_k$ in $X$ is 
$$
\text{Span}\Bigg\{\frac{\partial}{\partial x_j},\frac{\partial}{\partial y_j}, \, j\ne k\Bigg\},
$$
 where $x_j$ and $y_j$ are the real and imaginary parts of $z_j$, respectively. Note also that $e^{\frac{i2\pi s}{m_k}}$ acts by a nontrivial rotation on each subspace 
 $$
 \text{Span}\Bigg\{\frac{\partial}{\partial x_j},\frac{\partial}{\partial y_j}\Bigg\}
 $$ 
 for every $j\ne k$. Let $A_{s,k}$ denote the linearized $S^1$ action of $e^{\frac{i2\pi s}{m_k}}$ on the normal space to $X_k$, for $s = 1, \dots, m_k-1$.
 
Applying Theorem \ref{mainthm} in our setting with $\alpha=1$, we conclude that  
$$
\begin{aligned}
\mu_{\frac{1}{h}}(\rho) &\sim& (2\pi h)^{{2-2d}} \sum_{i=0}^{\infty} h^i \sum_{\ell \leq 2i} \mu_{i,\ell}{\left( \frac{d^{\ell}\rho}{dt^{\ell}} \right)} \hspace*{5cm} \notag \\ 
&&+\  {(2\pi h)^{-2}}\sum_{k=1}^d\sum_{s=1}^{m_k-1}  e^{-\frac{i2\pi s}{m_k h}}{|\operatorname{det}(A_{s,k}-I)|}^{-1} {(\p_1^{k,s})_* \nu_{k,s}} +O(h).
\end{aligned}
$$
The first line consists of terms that are real and the second line does not. We conclude that 
\begin{equation}\label{eqn:wpp}
\sum_{k=1}^d\sum_{s=1}^{m_k-1} e^{-\frac{i2\pi s}{m_k h}}{|\operatorname{det}(A_{s,k}-I)|}^{-1} {(\p_1^{k,s})_* \nu_{k,s}} 
\end{equation}
is espectrally determined. 

Now assume that $h=\frac{1}{M m_1 \cdots m_d }$ for some positive integer $M$. Then \eqref{eqn:wpp} becomes real. We can thus conclude that the product $m_1\cdots  m_d$ is espectrally determined, and therefore so is each $m_j$ as the $m_j$ are distinct primes. 
\end{proof}

Note that Proposition \ref{prop:weights} complements the results of \cite{adfg} and \cite{guw}.  
It is likely that by exploiting the terms arising from fixed points in the asymptotic expansion for $\mu_{\frac{1}{h}}$, the above proposition can be extended to more general values for the $m_k$'s. We will consider this in a subsequent paper.

\bibliographystyle{plain}
\bibliography{s-c_weights}

\newcommand{\noopsort}[1]{} \newcommand{\printfirst}[2]{#1}
  \newcommand{\singleletter}[1]{#1} \newcommand{\switchargs}[2]{#2#1}
  \def\cprime{$'$}
\begin{thebibliography}{10}

\bibitem{a2}
Miguel Abreu.
\newblock K\"ahler metrics on toric orbifolds.
\newblock {\em J. Differential Geom.}, 58(1):151--187, 2001.

\bibitem{a3}
Miguel Abreu.
\newblock K\"ahler geometry of toric manifolds in symplectic coordinates.
\newblock In {\em Symplectic and contact topology: interactions and
  perspectives ({T}oronto, {ON}/{M}ontreal, {QC}, 2001)}, volume~35 of {\em
  Fields Inst. Commun.}, pages 1--24. Amer. Math. Soc., Providence, RI, 2003.

\bibitem{adfg}
Miguel Abreu, Emily~B. Dryden, Pedro Freitas, and Leonor Godinho.
\newblock Hearing the weights of weighted projective planes.
\newblock {\em Ann. Global Anal. Geom.}, 33(4):373--395, 2008.

\bibitem{bh1}
Jochen Br{\"u}ning and Ernst Heintze.
\newblock Representations of compact {L}ie groups and elliptic operators.
\newblock {\em Invent. Math.}, 50(2):169--203, 1978/79.

\bibitem{bh2}
Jochen Br{\"u}ning and Ernst Heintze.
\newblock The asymptotic expansion of {M}inakshisundaram-{P}leijel in the
  equivariant case.
\newblock {\em Duke Math. J.}, 51(4):959--980, 1984.

\bibitem{cds}
Ana Cannas~da Silva.
\newblock {\em Lectures on symplectic geometry}, volume 1764 of {\em Lecture
  Notes in Mathematics}.
\newblock Springer-Verlag, Berlin, 2001.

\bibitem{DiSjo}
Mouez Dimassi and Johannes Sj{\"o}strand.
\newblock {\em Spectral asymptotics in the semi-classical limit}, volume 268 of
  {\em London Mathematical Society Lecture Note Series}.
\newblock Cambridge University Press, Cambridge, 1999.

\bibitem{d}
Harold Donnelly.
\newblock {$G$}-spaces, the asymptotic splitting of {$L^{2}(M)$} into
  irreducibles.
\newblock {\em Math. Ann.}, 237(1):23--40, 1978.

\bibitem{dgs3}
Emily~B. Dryden, Victor Guillemin, and Rosa Sena-Dias.
\newblock Equivariant inverse spectral problems.
\newblock In {\em Spectral geometry}, volume~84 of {\em Proc. Sympos. Pure
  Math.}, pages 155--166. Amer. Math. Soc., Providence, RI, 2012.

\bibitem{dgs2}
Emily~B. Dryden, Victor Guillemin, and Rosa Sena-Dias.
\newblock Equivariant inverse spectral theory and toric orbifolds.
\newblock {\em Adv. Math.}, 231(3-4):1271--1290, 2012.

\bibitem{guw}
V.~Guillemin, A.~Uribe, and Z.~Wang.
\newblock Geodesics on weighted projective spaces.
\newblock {\em Ann. Global Anal. Geom.}, 36(2):205--220, 2009.

\bibitem{g}
Victor Guillemin.
\newblock Kaehler structures on toric varieties.
\newblock {\em J. Differential Geom.}, 40(2):285--309, 1994.

\bibitem{GuMM}
Victor Guillemin.
\newblock {\em Moment maps and combinatorial invariants of {H}amiltonian
  {$T^n$}-spaces}, volume 122 of {\em Progress in Mathematics}.
\newblock Birkh\"auser Boston Inc., Boston, MA, 1994.

\bibitem{ggk}
Victor Guillemin, Viktor Ginzburg, and Yael Karshon.
\newblock {\em Moment maps, cobordisms, and {H}amiltonian group actions},
  volume~98 of {\em Mathematical Surveys and Monographs}.
\newblock American Mathematical Society, Providence, RI, 2002.
\newblock Appendix J by Maxim Braverman.

\bibitem{gs1}
Victor Guillemin and Shlomo Sternberg.
\newblock {\em Symplectic techniques in physics}.
\newblock Cambridge University Press, Cambridge, second edition, 1990.

\bibitem{gs}
Victor Guillemin and Shlomo Sternberg.
\newblock Semi-classical analysis.
\newblock Available at \url{http://www.math.harvard.edu/~shlomo/docs}, 2012.

\bibitem{k}
Daniel~A. Klain.
\newblock The {M}inkowski problem for polytopes.
\newblock {\em Adv. Math.}, 185(2):270--288, 2004.

\bibitem{lt}
Eugene Lerman and Susan Tolman.
\newblock Hamiltonian torus actions on symplectic orbifolds and toric
  varieties.
\newblock {\em Trans. Amer. Math. Soc.}, 349(10):4201--4230, 1997.

\bibitem{sjaler}
Reyer Sjamaar and Eugene Lerman.
\newblock Stratified symplectic spaces and reduction.
\newblock {\em Ann. of Math. (2)}, 134(2):375--422, 1991.

\bibitem{treves}
Fran{\c{c}}ois Tr{\`e}ves.
\newblock {\em Introduction to pseudodifferential and {F}ourier integral
  operators. {V}ol. 1}.
\newblock Plenum Press, New York, 1980.
\newblock Pseudodifferential operators, The University Series in Mathematics.

\end{thebibliography}

\end{document}